\documentclass[12pt]{amsart}
\usepackage[foot]{amsaddr}

\usepackage{amssymb,amscd,amsthm,verbatim,amsmath,color,fancyhdr,mathrsfs}
\usepackage{graphicx}
\usepackage{turnstile}
\usepackage[colorlinks = true, urlcolor={blue}, citecolor={blue}, backref=page]{hyperref}

\usepackage[letterpaper, left=2.5cm, right=2.5cm, top=2.5cm,bottom=2.5cm,dvips]{geometry}

%%%%%%%%%%%%%%%%%%%%%%%%
\usepackage{styfile} % contains additional packages outside amsart template
%%%%%%%%%%%%%%%%%%%%%%%%

\setcounter{section}{0}

\newtheorem{theorem}{Theorem}[section]
\newtheorem{proposition}[theorem]{Proposition}
\newtheorem{lemma}[theorem]{Lemma}
\newtheorem{remark}[theorem]{Remark}

\newtheorem{definition}[theorem]{Definition}

% theorem styles outside amsart template

\newtheorem{assumption}{Assumption}

%%%%%%%%%%%%%%%%%%%%%%%%% custom macros and referencing template
% brackets
\DeclarePairedDelimiterX{\bracket}[3]{#1}{#2}{#3}
\newcommand{\round}[1]{\bracket*{(}{)}{#1}}

\newcommand{\squarebrack}[1]{\bracket*{\lbrack}{\rbrack}{#1}}

%%%%%%%%%%%%%%%%%%%%%%%%%%%%%%%%%%%%%%%%%%%%%%%%%%%%%%%%%%%%%%%%%%%%%%%%%%%%%%%%
% Math type-settings from - http://www.tug.org/TUGboat/Articles/tb18-1/tb54becc.pdf
%%%%%%%%%%%%%%%%%%%%%%%%%%%%%%%%%%%%%%%%%%%%%%%%%%%%%%%%%%%%%%%%%%%%%%%%%%%%%%%%

 % for units like \unit{kg}
\providecommand*{\eu}{\ensuremath{\mathrm{e}}} % Euler's number
 % imaginary unit
% \eu^{\,\iu\pi} + 1 = 0

\providecommand{\newoperator}[3]{\newcommand*{#1}{\mathop{#2}#3}}
\providecommand{\renewoperator}[3]{\renewcommand*{#1}{\mathop{#2}#3}}

\renewoperator{\Re}{\mathrm{Re}}{\nolimits}
\renewoperator{\Im}{\mathrm{Im}}{\nolimits}

% subscript and superscript

%%%%%%%%%%%%%%%%%%%%%%%%%%%%%%%%%%%%%%%%%%%%%%%%%%%%%%%%%%%%%%%%%%%%%%%%%%%%%%%%
% Custom made macros with the help of latex books and references - https://texdoc.org/serve/mathtools.pdf/0
%%%%%%%%%%%%%%%%%%%%%%%%%%%%%%%%%%%%%%%%%%%%%%%%%%%%%%%%%%%%%%%%%%%%%%%%%%%%%%%%

% norms and delimiters
%%%%%%%%%%%%%%%%%%%%%%
\providecommand*{\slot}[1]{\ifblank{#1}{\,\cdot\,}{#1}}

\DeclarePairedDelimiterXPP{\nrm}[2]{}{\lVert}{\rVert}{\ensuremath{_{#1}}}{\ifblank{#2}{\,\cdot\,}{#2}}
\newcommand{\norm}[2]{\nrm*{#1}{#2}}

\newcommand{\enorm}[1]{\norm{2}{#1}} % Euclidean norm
\newcommand{\esqnorm}[1]{\enorm{#1}^2} % square of Euclidean norm

\newcommand{\opnorm}[1]{\norm{\mathrm{op}}{#1}}
\newcommand{\normF}[1]{\norm{\mathrm{F}}{#1}}

\newcommand{\abs}[1]{\bracket*{\lvert}{\rvert}{#1}}

\newcommand{\inner}[1]{\bracket*{\langle}{\rangle}{#1}}

% Probability
%%%%%%%%%%%%%
\DeclarePairedDelimiterXPP\prob[1]{\mathbb{P}}{\lbrace}{\rbrace}{}{\renewcommand\given{\nonscript\:\delimsize\vert\nonscript\:\mathopen{}}#1} % base probability
\newcommand{\Prob}[1]{\prob*{#1}} % probability operator

\DeclarePairedDelimiterXPP\probability[2]{\mathbb{P}_{#1}}{\lbrace}{\rbrace}{}{\renewcommand\given{\nonscript\:\delimsize\vert\nonscript\:\mathopen{}}#2} % base probability operator with subscript
 % probability operator with subscript

\DeclarePairedDelimiterXPP\expectation[1]{\mathbb{E}}{\lbrack}{\rbrack}{}{\renewcommand\given{\nonscript\:\delimsize\vert\nonscript\:\mathopen{}}#1} % base expectation
\newcommand{\E}[1]{\expectation*{#1}} % expectation operator

\DeclarePairedDelimiterXPP\expectationdist[2]{\mathbb{E}_{#1}}{\lbrack}{\rbrack}{}{\renewcommand\given{\nonscript\:\delimsize\vert\nonscript\:\mathopen{}}#2} % base expectation with subscript
\newcommand{\Exp}[2]{\expectationdist*{#1}{#2}} % expectation with subscript

\DeclarePairedDelimiterXPP\variance[1]{\mathrm{Var}}{\lbrack}{\rbrack}{}{\renewcommand\given{\nonscript\:\delimsize\vert\nonscript\:\mathopen{}}#1} % base variance
 % variance

\DeclarePairedDelimiterXPP\variancedist[2]{\mathrm{Var}_{#1}}{\lbrack}{\rbrack}{}{\renewcommand\given{\nonscript\:\delimsize\vert\nonscript\:\mathopen{}}#2} % % base variance with subscript
 % variance with subscript

\DeclarePairedDelimiterXPP\covariance[2]{\mathrm{Cov}}{(}{)}{}{#1,\mathopen{}#2} % base covariance
 % covariance

\DeclarePairedDelimiterXPP\covariancedist[3]{\mathrm{Cov}_{#1}}{(}{)}{}{#2,\mathopen{}#3} % base covariance
 % covariance

% Other math shortcuts
%%%%%%%%%%%%%%%%%%%%%%
\newcommand{\inv}[1]{\frac{1}{#1}}
\newcommand{\indicator}[2]{\mathbbm{1}\ensuremath{_{#1}}\ifblank{#2}{}{\set{#2}}} % \ifblank{#2}{}{\set{#2}}
 % Trace operator

% \newcommand{\textleq}[1]{\stackrel{#1}{\leq}}
% \newcommand{\textgeq}[1]{\stackrel{#1}{\geq}}

\newoperator{\supp}{\mathrm{supp}}{\nolimits}

% Calculus
%%%%%%%%%%
% the differential operator
\makeatletter
\providecommand*{\diff}
{\@ifnextchar^{\DIfF}{\DIfF^{}}}
\def\DIfF^#1{
	\mathop{\mathrm{\mathstrut d}}
	\nolimits^{#1}\gobblespace}
\def\gobblespace{
	\futurelet\diffarg\opspace}
\def\opspace{
	\let\DiffSpace\!
	\ifx\diffarg(
	\let\DiffSpace\relax
	\else
	\ifx\diffarg[
	\let\DiffSpace\relax
	\else
	\ifx\diffarg\{
	\let\DiffSpace\relax
	\fi\fi\fi\DiffSpace}

% the partial differential operator
\providecommand*{\pdiff}
{\@ifnextchar^{\pDIfF}{\pDIfF^{}}}
\def\pDIfF^#1{
	\mathop{\mathrm{\mathstrut \partial}}
	\nolimits^{#1}\gobblespace}
\def\gobblespace{
	\futurelet\diffarg\opspace}
\def\opspace{
	\let\DiffSpace\!
	\ifx\diffarg(
	\let\DiffSpace\relax
	\else
	\ifx\diffarg[
	\let\DiffSpace\relax
	\else
	\ifx\diffarg\{
	\let\DiffSpace\relax
	\fi\fi\fi\DiffSpace}

% derivative and partial derivative operators
\providecommand*{\deriv}[3][]{\frac{\diff^{#1}{#2}}{\diff {#3}^{#1}}}

\makeatother

% Set Notations
%%%%%%%%%%%%%%%
\providecommand\given{}
\newcommand\SetSymbol[1][]{
	\nonscript\:#1\vert
	\allowbreak
	\nonscript\:
	\mathopen{}}
\DeclarePairedDelimiterX\Set[1]\{\}{
	\renewcommand\given{\SetSymbol[\delimsize]}
	#1
}

\newcommand{\set}[1]{\Set*{#1}}
\newcommand{\interval}[1]{\squarebrack{#1}}

\newcommand{\setinline}[1]{\Set{#1}}

% Complexity theoretic
%%%%%%%%%%%%%%%%%%%%%%

% Special sets
%%%%%%%%%%%%%%
\newcommand{\R}{\mathbb{R}}
\newcommand{\Rd}[1]{\mathbb{R}^{#1}}
\newcommand{\Natural}{\mathbb{N}}

\newcommand{\Integer}{\mathbb{Z}}

%%%%%%%%%%%%%%%%%%%%%%%%%%%%
% Letters and their variants
%%%%%%%%%%%%%%%%%%%%%%%%%%%%

% bold numbers, units and zeros

\newcommand{\id}{\mathrm{id}}

% bold small english vectors

% bold capital english matrices

% \newcommand{\N}{\mathbf{N}}
\newcommand{\N}{\mathbb{N}}

% bold capital greek matrices

% capital mathcal
\newcommand{\Acal}{\mathcal{A}}

\newcommand{\Dcal}{\mathcal{D}}
\newcommand{\Ecal}{\mathcal{E}}
\newcommand{\Fcal}{\mathcal{F}}

\newcommand{\Hcal}{\mathcal{H}}

\newcommand{\Mcal}{\mathcal{M}}

\newcommand{\Scal}{\mathcal{S}}
\newcommand{\Tcal}{\mathcal{T}}

\newcommand{\Vcal}{\mathcal{V}}
\newcommand{\Wcal}{\mathcal{W}}

% capital mathscr

% small greek bold vectors

\newcommand{\tauvec}{{\boldsymbol{\tau}}}

\newcommand{\eps}{\epsilon}

% Soumik's macros

\newcommand{\rr}{\mathbb{R}}
\newcommand{\eq}{\begin{equation}}
\newcommand{\en}{\end{equation}}
\newcommand{\SP}[1]{\textcolor{red}{SP:#1}}

%\newcommand{\comment}[1]{}

%%%%%%%%%%%%%%%%%%%%%%%%%%
% Document specific macros
%%%%%%%%%%%%%%%%%%%%%%%%%%

\providecommand*{\Graphons}{{\widehat\Wcal}}
\providecommand*{\cut}{\square}
\newcommand{\cutnorm}[1]{\norm{\cut}{#1}}

 % weak convergence arrow
 % weak star convergence arrow
 % convergence w.r.t. a metric
 % topological/metric limit

\newcommand{\Ent}{\mathcal{E}}

\newcommand{\RaghavS}[1]{{\color{orange}[Somani: #1]}}
\newcommand{\Tripathi}[1]{{\color{cyan}[Tripathi: #1]}}

% \usepackage[round]{natbib}
% \usepackage{amsrefs} % cite like \cite{author20xxxyz}*{Theorem 1}
%%%%%%%%%%%%%%%%%%%%%%%%

\title{Stochastic optimization on matrices and a graphon McKean-Vlasov limit}

%  \author{Zaid Harchaoui, Sewoong Oh, Soumik Pal, Raghav Somani, Raghav Tripathi}
% \iffalse
\author{Zaid Harchaoui}
\address{Zaid Harchaoui\\ Department of Statistics \\ University of Washington\\ Seattle WA 98195, USA\\ {Email: zaid@uw.edu}}
\author{Sewoong Oh}
\address{Sewoong Oh\\ Paul G. Allen School of Computer Science \& Engineering \\ University of Washington\\ Seattle WA 98195, USA\\ {Email: sewoong@cs.washington.edu}}
\author{Soumik Pal}
\address{Soumik Pal\\ Department of Mathematics \\ University of Washington\\ Seattle WA 98195, USA\\ {Email: soumik@uw.edu}}
\author{Raghav Somani}
\address{Raghav Somani\\ Paul G. Allen School of Computer Science \& Engineering \\ University of Washington\\ Seattle WA 98195, USA\\ {Email: raghavs@cs.washington.edu}}
\author{Raghavendra Tripathi}
\address{Raghavendra Tripathi\\ Department of Mathematics \\ University of Washington\\ Seattle WA 98195, USA\\ {Email: raghavt@uw.edu}}

% \fi
\keywords{gradient flows on graphons, stochastic approximations, stochastic gradient descent on matrices, exchangeable arrays of diffusions, reflected Brownian motions}
	
\subjclass[2020]{60K35,62L20,60G09,68W20}

\thanks{This research is partially supported by the following grants. All the authors are supported by NSF grant DMS-2134012. Additionally, Oh is supported by NSF grant CCF-2019844 and CNS-2002664; Pal is supported by NSF grant DMS-2052239 and DMS-2133244; Harchaoui is supported by NSF grant CCF-2019844, DMS-2023166, DMS-2133244. Thanks to the PIMS Kantorovich Initiative for facilitating this collaboration. The authors are listed in alphabetical order.}

\date{\today}

\begin{document}

\begin{abstract}
    We consider stochastic gradient descents on the space of large symmetric matrices of suitable functions that are invariant under permuting the rows and columns using the same permutation. We establish deterministic limits of these random curves as the dimensions of the matrices go to infinity while the entries remain bounded. Under a ``small noise'' assumption the limit is shown to be the gradient flow of functions on graphons whose existence was established in [Oh, Somani, Pal, and Tripathi, \emph{J Theor Probab 37, 1469–1522 (2024)}]. We also consider limits of stochastic gradient descents with added properly scaled reflected Brownian noise. The limiting curve of graphons is characterized by a family of stochastic differential equations with reflections and can be thought of as an extension of the classical McKean-Vlasov limit for interacting diffusions to the graphon setting. The proofs introduce a family of infinite-dimensional exchangeable arrays of reflected diffusions and a novel notion of propagation of chaos for large matrices of diffusions converging to such arrays in a suitable sense.
\end{abstract}

\maketitle

%%%%%%%%%%%%%%%%%%%%%%%%%%%%%%
% indenting Table of Content for readability
% \makeatletter
% \def\@tocline#1#2#3#4#5#6#7{\relax
%   \ifnum #1>\c@tocdepth % then omit
%   \else
%     \par \addpenalty\@secpenalty\addvspace{#2}%
%     \begingroup \hyphenpenalty\@M
%     \@ifempty{#4}{%
%       \@tempdima\csname r@tocindent\number#1\endcsname\relax
%     }{%
%       \@tempdima#4\relax
%     }%
%     \parindent\z@ \leftskip#3\relax \advance\leftskip\@tempdima\relax
%     \rightskip\@pnumwidth plus4em \parfillskip-\@pnumwidth
%     #5\leavevmode\hskip-\@tempdima
%       \ifcase #1
%       \or\or \hskip 1em \or \hskip 2em \else \hskip 3em \fi%
%       #6\nobreak\relax
%     \hfill\hbox to\@pnumwidth{\@tocpagenum{#7}}\par% <---- \dotfill -> \hfill
%     \nobreak
%     \endgroup
%   \fi}
% \makeatother
%%%%%%%%%%%%%%%%%%%%%%%%%%%%%%

% \tableofcontents

% \vskip.5in
\section{Introduction}\label{sec:intro}

The study of particle systems under mean-field interaction is a classical topic in probability theory~\cite{gartner88}. It involves multidimensional diffusions that interact through their empirical distributions of the type
\begin{equation}\label{eq:mckeanvlasov}
    \diff X_i(t) = b\left(X_i(t), \hat{\mu}^{(N)}(t) \right) \diff t + \diff B_i(t) , \quad i \in \squarebrack{N} , \quad t\in\R_+,
\end{equation}
where $N\in\Natural$, $X_i(t) \in \Rd{d}$ for all $i\in\squarebrack{N}$ and for some $d\in\Natural$, and
$\hat{\mu}^{(N)}(t) \coloneqq \frac{1}{N} \sum_{i=1}^N \delta_{X_i(t)}$, is the empirical distribution of the vector $\left(X_i(t)\right)_{i \in [N]}$ at time $t\in \R_+$, and $\left(B_i \right)_{i \in [N]}$ is a vector of i.i.d. standard $d$-dimensional Brownian motions. Prominent examples of such particle systems include the diffusion given by the SDE
\begin{equation}\label{eq:granmediapart}
    \diff X_i(t)= - \nabla V\left(X_i(t) \right)\diff t - \frac{1}{N} \sum_{j=1}^N \nabla W\left( X_i(t) - X_j(t) \right) \diff t + \diff B_i(t) ,\quad t\in\R_+,
\end{equation}
for $i\in\squarebrack{N}$,
where $V$ and $W$ are differentiable convex functions on $\Rd{d}$. However, any drift that is symmetric in the coordinates (``mean-field interactions'') can be represented as~\eqref{eq:mckeanvlasov} for some suitable function $b$. Often, the SDE~\eqref{eq:mckeanvlasov} includes a reflection term to constrain the coordinate process to a subset of the Euclidean space~\cite{sznitmanreflect}. The study of such systems originated from the probabilistic study of the Boltzmann and Vlasov equations due to Kac~\cite{kac1956foundations}, McKean~\cite{mckean75}, Dobrushin~\cite{dobrushin79}, Tanaka~\cite{tanaka78} and many others. For modern surveys, see Sznitman~\cite{SznitmanSF}, Villani~\cite{villani12notes}, Chaintron and Diez~\cite{ChaintronDiez} and Jabin~\cite{Jabin14}. 

Under suitable assumptions, as the number of particles go to infinity, it is known that the process of empirical distributions of the particle system converges to the solutions of families of well-known PDEs. For example, for the system~\eqref{eq:granmediapart}, the random process $\hat{\mu}^{(N)}$  converges weakly to the solution of granular media equation~\cite{CGM08}, as $N\rightarrow \infty$. The convergence is often obtained via \textit{propagation of chaos} where, in the large particle limit, a finite collection of randomly chosen particles evolves independently and identically. Furthermore, a randomly chosen particle in the large particle limit is distributed according to the McKean-Vlasov SDE~\cite{gartner88}: $\diff X(t) = b\left( X(t), \mu(t) \right)\diff t + \diff B(t)$, $t\in\R_+$, where $\mu(t)$ is the law of $X(t)$.

In this work we study an analogous evolution of symmetric matrices where the coordinates interact via a suitably symmetric function. As an example, consider the function $R_n$ defined on $\Mcal_n^{0}$, the set of all $n\times n$ symmetric matrices with entries in $[0, 1]$, given by
\begin{equation} 
\label{eqn:examplern}
    R_n(A)\coloneqq\frac{1}{n}\mathbb{E}\norm{2}{Y-n^{-1}AX}^2.
\end{equation}
where $(X, Y)\in \mathbb{R}^n\times \mathbb{R}^n$ is a random vector. Minimizing $R_n$ is the classical least squares regression problem. However, notice that even in this simple setup, this problem is non-trivial because of the restriction that entries of $A$ are in $[0, 1]$. If we assume that $(X, Y)$ is exchangeable, that is, $(X, Y)\stackrel{d}{=}(X^{\sigma}, Y^{\sigma})$ for any permutation $\sigma$ of $[n]\coloneqq \setinline{1,2,\ldots,n}$, then
the function $R_n$ satisfies a \emph{permutation invariance} property. That is, its value does not change if we permute the rows and columns of the matrix $A$ by the same permutation over $\squarebrack{n}$. Another rich source of such permutation invariant functions comes from the functions on unlabelled weighted graphs, for example, homomorphism density functions. Optimization of homomorphism density functions is a challenging problem that is being actively investigated~\cite{bianchi2021limit,neeman2023typical}. Projected stochastic gradient methods are empirically studied for optimizing such problems~\cite{ChernThesis}. We refer the reader to Section~\ref{sec:example} for more details on such examples. Consider the following diffusion on symmetric $n\times n$ matrices 
\begin{equation}\label{eq:exampleR}
   \diff X_{n}(t) = -n^2\nabla R_n\round{X_{n}(t)}\diff t + \beta\diff B_{n}(t) + \diff L_{n}(t) ,\qquad t\in\R_+,
\end{equation}
where $B_{n}$ is a system of $n\times n$ symmetric matrix-valued process of coordinatewise independent Brownian motions and $L_{n}$ is the coordinatewise bounded variation local time process that constrains each coordinate process to stay in the interval $\interval{0,1}$ (see Section~\ref{sec:RBM} for details). One may ask what is an appropriate notion of limit of such a process as $n\rightarrow \infty$? Does~\eqref{eq:exampleR} exhibit propagation of chaos? Note that the function $R_n$ in~\eqref{eq:exampleR} is not covered by the classical McKean-Vlasov theory since $R_n(A)$ is not symmetric in the $n^2$ (up to symmetry) many entries of a matrix $A$. Therefore, $R_n$ cannot be expressed as a function of the empirical distribution of the entries of the argument matrix. The same is true for any arbitrary differentiable function over $n \times n$ symmetric matrices that is invariant under permuting the rows and the columns using the same permutation. Spectral functions, for example, satisfy such an invariance, as do functions on edge-weighted graphs (represented by their adjacency matrices) that are invariant under vertex relabeling. %Consider SGD over such a function (see below for more details). 
This particular class of symmetry is captured, not by empirical measures but by \emph{graphons}. In other words, such functions can be thought of as functions on the space of graphons instead of measures. 

Analogous to the classical McKean-Vlasov theory, we show in this paper that, under suitable assumptions,~\eqref{eq:exampleR} exhibits a propagation of chaos. Furthermore, in $n\to \infty$ limit, the coordinates of $X_n(t)$ become conditionally independent, and the evolution of a randomly chosen coordinate can be described by a novel graphon-valued McKean-Vlasov equation. The existence and uniqueness of such a process are established in Proposition~\ref{prop:exists}. Proposition~\ref{prop:mck_vlasov} shows that the process $X_n(t)$ converges to a deterministic curve on the space of graphons, $\Graphons$ (see Section~\ref{sec:background}). We also refer the reader to see Section~\ref{sec: Linear regression} for details of our example.
%Note that the particular example of $R_n$ in~\eqref{eq:exampleR} is a spectral function and the question of hydrodynamic limit can be considered on the ``particle system'' of the $n$ real eigenvalues of the matrix argument. This is an interesting connection to dynamical Random Matrix Theory \cite[Chapter 12]{RMTdyn}.

Recently, various authors~\cite{delattre2016note,bhamidi2019weakly,coppini2022note,dupuis2022large} have investigated McKean-Vlasov limits for interacting particle systems on dense graphs. This is akin to equation~\eqref{eq:granmediapart} where particles interact only if they are neighbors in some underlying graph.  In these works, the McKean-Vlasov system describes the evolution of random particles from an infinite ensemble where the underlying interaction is determined by a graph or graphon. Extensions to the sparse regime can be found in ~\cite{lacker2019local, oliveira2019interacting,bet2020weakly, bayraktar2020graphon, oliveira2020interacting}. We note that our McKean-Vlasov limit describes the evolution of the graphon itself, and not the distribution of any particle system. We borrow the name McKean-Vlasov to stress that each edge-weight evolves by an \textit{ensemble} effect of all the other edge weights, but that ensemble is a graphon and not the empirical distribution of any particle system as done in the papers cited above.

Notice that~\eqref{eq:exampleR} arises as the limit of the projected stochastic gradient descent algorithm, which is used in practice to optimize $R_n$. As mentioned above, we establish that the curves described by~\eqref{eq:exampleR} converge to a deterministic curve on the space of graphons. In the zero-noise limit, the (deterministic) limiting curve on the space of graphons is a gradient flow and hence converges to the minimizer exponentially fast. Thus, the evolution~\eqref{eq:exampleR} gives a way to numerically approximate the minimizer. More generally, the limiting curve converges to stationary points and thus~\eqref{eq:exampleR} provides an algorithm to numerically approximate these stationary points that may be useful in obtaining reasonable guesses regarding the structure of the minimizers in such problems. We describe the projected gradient descent and projected stochastic gradient descent algorithms in more detail in the following paragraphs.

Projected Gradient Descent (GD) based algorithms are the workhorse in optimizing such functions~\cite{cauchy1847methode,bubeck2015convex,bottou2018optimization}. However, in most cases, computing gradients can be computationally intensive. In practice, stochastic approximation algorithms based on projected Stochastic Gradient Descent (SGD) are instead used to minimize such functions since they are often faster to simulate~\cite{robbins1951stochastic,kiefer1952stochastic}. The details of this common Markov chain are described later in the section, and the reader can refer to the monographs~\cite{benaim1999dynamics,kushner2003stochastic,borkar2009stochastic,moulines2011nonasymptotic,kushner2012stochastic} for a detailed overview. Roughly, if the current state is a symmetric matrix $A$, one jumps to a new state by taking a small step along the negative Euclidean gradient $-\nabla R_n(A)$, and potentially adding independent, centered, and variance-bounded noise to each matrix entry (up to symmetry). Each matrix entry is then projected onto the interval $\interval{0,1}$ to satisfy the entrywise constraint.

Gradient descent (GD), with small step sizes, approximates the Euclidean gradient flow obtained as a solution to Cauchy's problem
\[
    \dot{A}_{i,j}(t) = -\nabla_{i,j}R_n(A(t)), \qquad (i,j)\in\squarebrack{n}^{2}, \qquad t\in\R_+,
\]
in the interior of $\Mcal_n^{0}$. Here $\R_+$ denotes the set of non-negative real numbers, which is used to index time, $\nabla_{i,j}$ refers to the partial derivative with respect to the $(i,j)$-th matrix entry. It is therefore natural to understand a suitable scaling limit of SGD on the space of such matrices. 

A previous work~\cite[Theorem 4.17]{oh2021gradient} showed that under suitable assumptions on $\round{R_n}_{n \in \Natural}$, the implicit Euler update scheme approximates a gradient flow curve, in an appropriate sense, over the space of \emph{graphons}, $\Graphons$, when the step size is taken to zero and $n$ grows to infinity. The reader is referred to~\cite{lovasz2006limits,borgs2008convergent,borgs2012convergent} and Section~\ref{sec:background_graphons} for the required exposition on graphons. In this work, we ask a similar question for SGD-based algorithms. We show that under an appropriate ``small noise'' assumption and a consistency and other suitable assumptions on the functions $\round{R_n}_{n \in \Natural}$, the SGD iterations converge appropriately to a limiting deterministic curve that is a gradient flow on the space of graphons. Moreover, when an extra Gaussian noise is added to each SGD iterate, the noisy SGD iterations also converge to a deterministic curve on graphons which admits a McKean-Vlasov description. Similar McKean-Vlasov system has been studied in~\cite{athreya2023path}, however, the focus of~\cite{athreya2023path} is to study a particular Markov chain on large graphs, namely a version of the Metropolis Markov chain. These Markov chains are designed to mimic the gradient flow in the limit.

Very roughly, $\Wcal$, the set of bounded symmetric measurable functions on $[0, 1]^2$ or {\em kernels}, is our limiting space for symmetric matrices. The set of graphons, $\Graphons$, is obtained as a quotient of $\Wcal$ where we identify two kernels to be the same if one can be obtained from the other by using the same measure-preserving transformation on its ``rows'' and ``columns'' (see Section~\ref{sec:background_graphons}). Thus, a function $R\colon\Graphons\to\R$ over graphons naturally extends to a function over the set of kernels $\Wcal$.  For any $n\in\Natural$, the set of symmetric matrices $\Mcal_n$, over which algorithms like GD and SGD operate on, can be naturally identified with a subset, \emph{finite dimensional kernels}, $\Wcal_n\subset \Wcal$ of the kernels (see Section~\ref{sec:background_graphons} for details). This identification/embedding will be denoted by $K$ (as in kernel) and its inverse will be denoted by $M_n$ (as in matrix). Using $K$, the restriction of the function $R$ to $\Wcal_n$ can be viewed as a function $R_n$ on $\Mcal_n$.
\sloppy Define the projection operator $P\colon\R\to\interval{-1,1}$ as 
%$P(x) \coloneqq \argmin_{z\in\interval{-1,1}}\abs{x-z}^2 = -\indicator{(-\infty,1)}{x} + x\indicator{\interval{-1,1}}{x} + \indicator{(1,\infty)}{x}$ for $x\in\R$, where $\indicator{A}{}$ is the indicator function of $A\subseteq\R$. 
\[
     P(x) \coloneqq \begin{cases}
         -1 & \text{if } x\in(-\infty,-1),\\
         x & \text{if } x\in\interval{-1,1},\\
         1 & \text{if } x\in(1,\infty).
     \end{cases}
\]
The operator $P$ can be used coordinatewise on matrices and kernels. For every $n\in\Natural$, let $\tauvec_n \coloneqq \round{\tau_{n,k}}_{k\in\Integer_+}$, be a sequence of positive step sizes (also known as the learning rate). Here $\Integer_+$ denotes the set of all non-negative integers. Given the step size sequence $\tauvec_n$, we can define a monotonically increasing sequence of times $\round{t_{n,k}}_{k\in\Integer_+}$, defined as a cumulative sum of $\tauvec_n$, i.e., $t_{n,0} = 0$ and $t_{n,k}\coloneqq \sum_{j=0}^{k-1}\tau_{n,j}$ for any $k\in\Natural$. We assume $\tauvec_n$ to have a divergent sum so to cover the whole non-negative real line $\R_+$, i.e., to satisfy $\lim_{k\to\infty}t_{n,k} = \infty$. We define the norm of the step size sequence $\tauvec_n$ as $\abs{\tauvec_n} \coloneqq \sup_{k\in\Integer_+}\tau_{n,k}$, which is assumed to be finite. We now describe our first iterative scheme.

\begin{definition}[Projected GD]\label{def:PGD}
    Let $n\in\Natural$ and let $R_n\colon \Mcal_n\to \R$ be a differentiable function. The projected GD iterates of $R_n$ starting at $V_{n,0}\in\Mcal_n$ is defined to be a sequence of symmetric matrices $\round{V_{n,k}}_{k\in\Integer_+}$ given iteratively as
    \begin{align}
        V_{n,k+1} = P\round{V_{n,k} - n^2\tau_{n,k}\nabla R_n\round{V_{n,k}}} ,\qquad k\in\Integer_+ .\label{eq:PGD}\tag{PGD}
    \end{align}
\end{definition}
There is a natural notion of gradient of functions defined on $\Graphons$ that we call Fr\'echet-like derivative (see Definition~\ref{def:frechet_like_derivative}), and is related to the Euclidean gradients in finite dimensions by a scaling of $n^2$. Suppose $R$ is such a function whose Fr\'echet-like derivative evaluation map is denoted by $\phi$. If $R_n$ is obtained from $R$ by restricting $R$ to $\Mcal_n$ and the function $R_n$ is differentiable up to the boundary of $\Mcal_n$ for every $n\in\Natural$, then it is shown in~\cite[Lemma 4.16]{oh2021gradient} that
\begin{align}
    n^2\nabla R_n = M_n \circ \phi \circ K.\label{eq:scaling_gradient}
\end{align}
Simply put, $n^2$ times the Euclidean gradient of $R_n$ at a matrix argument $A$ can be identified as the Fr\'echet-like derivative $\phi$ of $R$ at the kernel argument $K(A)$. The time in the Euclidean gradient in Definition~\ref{def:PGD} is therefore scaled by $n^2$ following the relation~\eqref{eq:scaling_gradient}. The~\ref{eq:PGD} algorithm is essentially the explicit Euler iteration scheme up to the projection.

We now define the stochastic optimization setup for $R_n$. In order to do so, we first fix some notations and make some assumptions on $R$ and $R_n$. Let $\round{\xi_{k+1}}_{k\in\Integer_+}$ be an i.i.d. sequence of random variables with some distribution $\Dcal$ over some arbitrary measurable space $(\Omega,\Acal)$. Let $g\colon \Wcal \times \Omega \to L^\infty\big(\interval{0,1}^{(2)}\big)$ where $L^\infty\big(\interval{0,1}^{(2)}\big)$ is the set of all bounded measurable functions $\phi:[0, 1]^2\to \R$ such that $\phi(x, y)=\phi(y, x)$. To emphasize that $\phi$ is symmetric, we denote the domain by $[0, 1]^{(2)}$ which denotes the set $\{(x, y)\in [0, 1]^2: x\leq y\}$. Define $g_n$ on $\Mcal_n \times \Omega$ as $g_n(A;\xi) = g(K(A);\xi)$ for every $n\in\Natural$ and $A\in\Mcal_n$, and assume that
\begin{align}
    \nabla R_n = \Exp{\xi\sim\Dcal}{g_n(\slot{};\xi)}.
\end{align}

% We now define the SGD iterates of $R_n$. In order to do so, we first fix some notations and make some assumptions on $R$ and $R_n$. Let $\round{\xi_{k+1}}_{k\in\Integer_+}$ be an i.i.d. sequence of random variables with some distribution $\Dcal$ over some arbitrary measurable space $(\Omega,\Acal)$. Let $\ell\colon\Wcal\times \Omega \to \R$ be a function and let
% \begin{align}
%     R \coloneqq \Exp{\xi\sim\Dcal}{\ell(\slot{};\xi)}.
% \end{align}

Under suitable assumptions (see Assumption~\ref{asmp:small_noise}) on the function $g$, the function $R$ is invariant under measure preserving transformations and hence defines a function on $\Graphons$.
% Under suitable assumption (see Assumption~\ref{asmp:small_noise}), on function $\ell$, the function $R$ is invariant under measure preserving transformations and hence defines a function on $\Graphons$.
We are interested in stochastic analogues of the iteration scheme in Definition~\ref{def:PGD}, for such a function $R$, possibly with a noise at each iteration. In other words, our interest lies in noisy variations of projected GD iterations (see Definition~\ref{def:PGD}). In this setting, we will consider two ways to introduce noise at each iteration.
\begin{enumerate}
    \item\label{item:small_noise} {\bf Small noise}: We can replace the Euclidean derivative $\nabla R_n$ in equation~\eqref{eq:PGD} by its unbiased stochastic proxy $g_n\round{\slot{}; \xi_{k+1}}$. As a special case, $g$ can be obtained from a function $\ell\colon\Wcal \times \Omega \to \R$, as $g(\slot{};\xi) \coloneqq (D_{\Wcal})\ell(\slot{};\xi)$ for all $\xi\in\Omega$, where $(D_{\Wcal}\ell)(\slot{};\xi)$ is the Fr\'echet-like derivative (see Definition~\ref{def:frechet_like_derivative}) of $\ell(\slot{};\xi)$. Such a stochastic approximation is known as Stochastic Gradient Descent (SGD).
    % \item {\bf Small noise}: We can replace the Euclidean derivative $\nabla R_n$ in equation~\eqref{eq:PGD} by its unbiased stochastic proxy $\nabla \ell_n\round{\slot{}; \xi_{k+1}}$, where $\ell_n\colon \Mcal_n\times  \Omega  \to \R$ is the restriction of $\ell$ on $\Mcal_n$, defined as $\ell_n(\slot{};\xi) \coloneqq \ell\round{\slot{};\xi}\circ K$ for all $\xi\in \Omega$.
    \item\label{item:large_noise} {\bf Large noise}: We can add an additive noise to iterates in equation~\eqref{eq:PGD} before the projection, as we describe in Definition~\ref{def:PNSGD} below.
\end{enumerate}

We can now define the noisy analogs of~\eqref{eq:PGD}, that is, \emph{projected (noisy) SGD}. We will use the operator $\circ$ over symmetric matrices to denote the Hadamard (elementwise) product.

\begin{definition}[Projected SGD with and without noise]\label{def:PNSGD}
    Let $n\in\Natural$. Starting at $W_{n,0}\in\Mcal_n$, the projected (noisy) SGD algorithm produces a sequence of iterates $\round{W_{n,k}}_{k\in\Integer_+}$ defined as
    \begin{align}\label{eq:PNSGD}
        W_{n,k+1} = P\round{W_{n,k} - n^2\tau_{n,k}g_n\round{W_{n,k};\xi_{k+1}} + \tau_{n,k}^{1/2}G_{n,k}} ,\qquad k\in\Integer_+. \tag{PNSGD}
    \end{align}
    Here $\round{G_{n,k}}_{k\in\Integer_+}$ is an $n\times n$ symmetric matrix valued martingale difference sequence independent of $\round{\xi_{k+1}}_{k\in\Integer_+}$. We only consider the noise $G_{n,k}$, for $k\in\Integer_+$, of the form $G_{n, k}=\Sigma_n(W_{n,k})\circ Z_{n,k}$ for some $\Sigma_n$ that maps matrices in $\Mcal_n$ to $n\times n$ symmetric matrices with non-negative entries and $\round{Z_{n,k}}_{k\in\Integer_+}$ is a sequence of independent $n\times n$ symmetric random matrices with standard normal entries (up to matrix symmetry).
\end{definition}

Due to the natural identification of $\Mcal_n$ with $\Wcal_n$, the GD iterates $\round{V_{n,k}}_{k\in\Integer_+}\subset \Mcal_n$ and the SGD iterates $\round{W_{n,k}}_{k\in\Integer_+}\subset \Mcal_n$ in Definitions~\ref{def:PGD} and~\ref{def:PNSGD} respectively, can be viewed as kernel valued iterates $\big(V^{(n)}_{k}\big)_{k\in\Integer_+}\subset \Wcal_n$ and $\big(W^{(n)}_{k}\big)_{k\in\Integer_+}\subset \Wcal_n$, under the embeddings $V^{(n)}_k = K(V_{n,k})$ and $W^{(n)}_k = K(W_{n,k})$ respectively for $k\in\Integer_+$. This allows us to interpret~\eqref{eq:PGD} and~\eqref{eq:PNSGD} as kernel-valued updates.  

We consider piecewise constant interpolations of the iterates (see Definition~\ref{def:interpolation}) and in this paper, we establish the existence of the scaling limit of these curves. We also characterize the limit under the absence of ``large noise". Our limiting procedure takes two steps. First, for every fixed $n\in\Natural$, we take the step size, i.e., $\abs{\tauvec_n}\to 0$ to obtain a limiting SDE on $\Mcal_n$. We then characterize the limit of the SDEs as $n\to\infty$ as an absolutely continuous curve on the space of graphons. 

\iffalse
Assumption~\ref{asmp:bdd_var} in Assumption~\ref{asmp:small_noise} essentially puts an assumption on the variance of the Fr\'echet-like derivatives of $\ell\round{\slot{};\xi}$ for a.e. $\xi\in\Omega$.\RaghavS{For $\ell$ to have a Fr\'echet-like derivative, it needs to be an invariant function, which needs property~\ref{item:ell_invariance}.} The assumption essentially requires that the random variable $D_\Wcal \ell(A;\xi)$ for $\xi\sim\Dcal$ has uniformly bounded variance for all $A\in\cup_{n\in\Natural}\Wcal_n$. That is, there exists $\sigma \geq 0$ such that 
\[
    \Exp{\xi\sim\Dcal}{\enorm{(D_\Wcal \ell)(A;\xi) - \phi(A)}^2} \leq \sigma^2,
\]
where $(D_\Wcal \ell)(A;\xi)$ is the Fr\'echet-like derivative of $\ell(\slot{};\xi)$ at $A\in\cup_{n\in\Natural}\Wcal_n$.\RaghavS{We can replace Assumption~\ref{asmp:bdd_var} in Assumption~\ref{asmp:small_noise} with this kernel, dimension independent form. The finite dimensional version can be derived using it.}\RaghavS{Need an example to justify this assumption. $\Rightarrow$} 
\fi

% We now state our main results. The appropriate assumptions are described in Section~\ref{sec:background}.  Theorem~\ref{thm:SGD_to_SDEn} states that for every fixed $n\in\Natural$, the curves obtained by piecewise constant interpolations of the projected noisy SGD iterations in~\eqref{eq:PNSGD} converge, as $\abs{\tauvec_n}\to 0$, to a random continuous curve which can be described by the solution of an SDE on $\Mcal_n$. 

\begin{theorem}\label{thm:SGD_to_SDEn}
    Let $n\in \N$ be fixed, and suppose Assumptions~\ref{asmp:R_ell_phi},~\ref{asmp:small_noise} and~\ref{asmp:large_noise} hold (see Section~\ref{sec:assumptions}). Let $W_n\colon\R_+\to\Mcal_n$ be the piecewise constant interpolation (Definition~\ref{def:interpolation}) of noisy SGD iterates $\round{W_{n, k}}_{k\in\Integer_+}$ as defined in~\eqref{eq:PNSGD}. Then, $W_n$ converges weakly in the space of c\`adl\`ag processes to $X_n$ as $\abs{\tauvec_n}\to 0$ that satisfies the SDE:
    %Let $X_n$ be a process on $\Mcal_n$, defined on some probability space, that satisfies the following Skorokhod SDE
    \begin{equation}\label{eq:RSDE}
        \diff X_n(t) = -n^2\nabla R_n(X_n(t))\diff t + \Sigma_n(X_n(t)) \circ \diff B_n(t) + \diff L^{-}_n(t) - \diff L^{+}_n(t),\tag{RSDE}
    \end{equation}
    for $t\in\R_+$, starting at $X_n(0)=W_{n, 0}$. Here $B_{n}$ is an $n\times n$ symmetric matrix valued process with coordinatewise independent standard Brownian motions up to matrix symmetry, and $\round{X_{n},L_{n}^+,L_{n}^-}$ solves the Skorokhod problem with respect to the set $\Mcal_n$ (see Section~\ref{sec:RBM}).
    %Then there exists a process $\widetilde{W}_n\colon\R_+ \to \Wcal_n$ defined on the same probability space as $X_n$ such that the $\mathrm{Law}\big(\widetilde{W}_n\big)=\mathrm{Law}(W_n)$ and,
  %  \[
   %     \lim_{\abs{\tauvec_n} \to 0} \E{\sup_{s\in\interval{0,T}} \enorm{\widetilde{W}^{(n)}(s)-X^{(n)}(s)}^2} = 0,
   % \]
   % for any $T>0$, where $\widetilde{W}^{(n)}(s) = K\big(\widetilde{W}_n(s)\big)$ and $X^{(n)}(s) = K(X_n(s))$ for all $s\in\R_+$.
\end{theorem}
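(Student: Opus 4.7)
The plan is to fix $n\in\Natural$ and realize the iterates $\round{W_{n,k}}_{k\in\Integer_+}$ as a discrete-time semimartingale valued in the compact set $\Mcal_n$; then, by writing one step of~\eqref{eq:PNSGD} in a drift-plus-martingale-plus-projection form, to invoke standard semimartingale convergence together with continuity of the Skorokhod map to obtain weak convergence of the c\`adl\`ag interpolation to~\eqref{eq:RSDE} as $\abs{\tauvec_n}\to 0$. The four ingredients will be (i) an explicit one-step decomposition that exposes the reflection, (ii) a tightness argument, (iii) identification of the martingale limit as $\int_0^\cdot \Sigma_n(X_n(s))\circ \diff B_n(s)$, and (iv) uniqueness of the reflected SDE on the polyhedral domain $\Mcal_n$.

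For step (i), I would decompose $g_n(W_{n,k};\xi_{k+1}) = \nabla R_n(W_{n,k}) + \eta_{n,k+1}$, where $\eta_{n,k+1}$ is a martingale difference with uniformly bounded conditional variance by Assumption~\ref{asmp:small_noise}. Setting
\begin{equation*}
    Y_{n,k+1} \coloneqq W_{n,k} - n^2\tau_{n,k}\nabla R_n(W_{n,k}) - n^2\tau_{n,k}\eta_{n,k+1} + \tau_{n,k}^{1/2}\Sigma_n(W_{n,k})\circ Z_{n,k},
\end{equation*}
the projection correction decomposes coordinatewise as $W_{n,k+1} - Y_{n,k+1} = \Delta L^-_{n,k+1} - \Delta L^+_{n,k+1}$ with non-negative $\Delta L^\pm_{n,k+1}$ supported on the lower/upper faces of $\Mcal_n$. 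Piecewise-constant interpolation (Definition~\ref{def:interpolation}) then yields, for each $t\geq 0$,
\begin{equation*}
    W_n(t) = W_{n,0} - n^2\int_0^t \nabla R_n(W_n(s))\,\diff s + M_n(t) + N_n(t) + L^-_n(t) - L^+_n(t) + \epsilon_n(t),
\end{equation*}
where $M_n$ is the martingale built from the increments $\tau_{n,k}^{1/2}\Sigma_n(W_{n,k})\circ Z_{n,k}$, $N_n$ is built from $-n^2\tau_{n,k}\eta_{n,k+1}$, and $\epsilon_n$ absorbs a step-discretization error of order $\Ocal(\abs{\tauvec_n})$ uniformly on compacts.

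For steps (ii)--(iii), the drift is bounded on $\Mcal_n$ by regularity of $R_n$ (Assumption~\ref{asmp:R_ell_phi}); the martingale $M_n$ has predictable quadratic variation $\sum_{k:t_{n,k}\leq t}\tau_{n,k}\Sigma_n(W_{n,k})^{\circ 2}$ that converges to $\int_0^t \Sigma_n(W_n(s))^{\circ 2}\,\diff s$, so $M_n$ is tight by Aldous' criterion and its limit is a continuous martingale by Rebolledo's theorem; the conditional quadratic variation of $N_n$ on $[0,t]$ is of order $n^4\abs{\tauvec_n}\cdot t$ and so $N_n\to 0$ uniformly in probability; and the $L^\pm_n$ are coordinatewise non-decreasing with total variation controlled by the a priori uniform boundedness of $Y_{n,k}$ over compact intervals (together with Assumption~\ref{asmp:large_noise} controlling $\Sigma_n$). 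Passing to a subsequential limit and applying standard semimartingale convergence (e.g. Jacod--Shiryaev, Theorem IX.2.4) identifies the limiting martingale as $\int_0^\cdot \Sigma_n(X_n(s))\circ \diff B_n(s)$ for a symmetric matrix-valued Brownian motion $B_n$, and the limit of $(W_n,L^-_n,L^+_n)$ satisfies the integral form of~\eqref{eq:RSDE} with $X_n(t)\in\Mcal_n$ and $L^\pm$ coordinatewise non-decreasing.

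The main obstacle is step (iv): rigorously identifying the limits of $L^\pm_n$ as the reflection local times of the Skorokhod problem on $\Mcal_n$, i.e.\ showing that each $\diff L^\pm_{n,ij}$ is supported on $\{X_{n,ij}(t)=\pm 1\}$, and establishing pathwise uniqueness of the reflected SDE. Because $\Mcal_n$ is a Cartesian product of intervals (modulo the matrix symmetry constraint that couples $(i,j)$ with $(j,i)$), the Skorokhod problem decouples into independent one-dimensional reflection problems on $\interval{-1,1}$ for each free coordinate, for each of which the Skorokhod map is Lipschitz in the uniform topology. Applying this continuous Skorokhod map to the pre-reflection decomposition shows that every subsequential limit coincides with the unique solution of~\eqref{eq:RSDE}, which upgrades subsequential weak convergence to weak convergence of the whole sequence and completes the proof.
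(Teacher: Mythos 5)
Your proposal is correct in outline, but it takes a genuinely different route from the paper. You argue by weak-convergence machinery: decompose the interpolated iterates into drift, diffusion-martingale, small-noise-martingale and projection terms, prove tightness (Aldous' criterion), identify the limiting martingale via the martingale problem and a representation on an extended space, and pin down the reflection terms and uniqueness through the coordinatewise two-sided Skorokhod map on $\interval{-1,1}$. The paper instead proves a strong, coupled approximation: Lemma~\ref{lem:ApproximationWithandWithoutRandomDrift} first removes the stochastic-gradient term by comparing~\eqref{eq:PNSGD} with~\eqref{eq:PGD+Noise} under the variance bound of Assumption~\ref{asmp:small_noise} (Gr\"onwall plus S{\l}omi\'nski's discrete Skorokhod estimate), and Lemma~\ref{lem:FixedDimensionContinumLimit} couples the Gaussian noise $G_{n,k}$ exactly with the Brownian increments $B_n(t_{n,k+1})-B_n(t_{n,k})$ and controls the expected squared supremum distance to the strong solution of~\eqref{eq:RSDE} using the $4$-Lipschitz Skorokhod map, Doob's maximal inequality and Gr\"onwall; this yields convergence in probability (hence weak convergence) together with a non-asymptotic rate of order $n^2\abs{\tauvec_n}\log(1/\abs{\tauvec_n})$. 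Your compactness route is more robust --- it would accommodate non-Gaussian martingale-difference added noise, whereas the paper's coupling crucially exploits the Gaussianity built into Definition~\ref{def:PNSGD} --- but it is heavier at the identification stage and produces no rate. If you carry it out, two points need explicit care: (i) state and use the identity, tacit in your step (iv) and also underlying the paper's argument, that the stepwise projection recursion equals the two-sided Skorokhod map applied to the piecewise-constant pre-reflection path; this is what lets you transfer the Lipschitz continuity of the map to the discrete scheme, control the total variation of $L^{\pm}_n$, and identify their limits with the local times satisfying the complementarity conditions; and (ii) since the quadratic variation of $M_n$ involves the not-yet-identified limit, the identification should go through verifying the martingale problem for subsequential limits, then invoking martingale representation together with pathwise uniqueness of~\eqref{eq:RSDE} (Lipschitz $n^2\nabla R_n$ and $\Sigma_n$ combined with the Lipschitz Skorokhod map) and a Yamada--Watanabe argument to conclude full-sequence convergence. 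A minor slip: under Assumption~\ref{asmp:small_noise} the conditional quadratic variation of your $N_n$ on $\interval{0,t}$ is of order $n^2\sigma^2\abs{\tauvec_n}t$ rather than $n^4\abs{\tauvec_n}t$; this is harmless since $n$ is fixed.
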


Note that the diffusion coefficients in~\eqref{eq:RSDE} act diagonally on the Brownian increments for each coordinate of the matrix valued process. In practice it makes sense to consider non-diagonal diffusion coefficients as an approximation to SGD. See~\cite{li2019stochastic} for a discussion.
%By discretizing such an SDE one recovers the standard SGD iteration scheme under the ``small noise'' setup ). % for a discussion where the authors develop a weak approximation framework to analyze SGD with positive step sizes.  
Practitioners also use variants of SGD under the ``small noise'' setup where instead of having a single unbiased stochastic proxy of the gradient, an average over independent batches of stochastic gradients is used at every step. Authors in~\cite{malladi2022on} derive weak SDE approximations of various popularly used stochastic optimization algorithms that use batches. However this existing literature does not cover SDEs with boundary terms.

Our main interest is in the limit of the kernel valued stochastic process $X^{(n)}(\cdot)=K\round{X_n(\cdot)}$ (Theorem~\ref{thm:SGD_to_SDEn}), as $n \rightarrow \infty$. This limit is a deterministic curve in $\Graphons$ that we now describe. Consider, for simplicity, the special case when each $\Sigma_n$ is $\beta$ times the identity matrix for some $\beta >0$. On a probability space that supports a standard linear Brownian motion $B_{1,2}(\cdot)$ and a pair of independent $\mathrm{Uni}\interval{0,1}$ random variables $(U_1, U_2)$ and given some $W_0 \in \Wcal$, one can construct a unique solution of the following family of one-dimensional reflected diffusions. Given $(U_1,U_2)=(x,y)$, for some $(x,y)\in \interval{0,1}^{(2)}$, let $X_{1,2}$ be a diffusion with state space $\interval{-1,1}$ with the initial condition $X_{1,2}(0)=W_0(x, y)$, and satisfying \begin{align}\label{eq:infinite_SDE0}
    \diff X_{1,2}(t) &= -\phi\left( \Gamma (t)\right)(x, y)\diff t + \beta \diff B_{1,2}(t) + \diff L^-_{1,2}(t) - \diff L^+_{1,2}(t),
\end{align}
for some $\beta\in\R_+$ and $t\in \R_+$. Here, $\phi$ is the Fr\'echet-like derivative of $R$ in \eqref{eq:scaling_gradient}, $L^-_{1,2}$ and $L^+_{1,2}$ are the local time processes such that $(X_{1,2},L^+_{1,2},L^-_{1,2})$ solves the Skorokhod problem with respect to $\interval{-1,1}$ (see Section~\ref{sec:RBM}). The kernel-valued process $\Gamma\colon\R_+ \to \Wcal$ is given by
\begin{equation}\label{eq:whatisgammat0}
    \Gamma(t)(u,v) \coloneqq  \E{ X_{1,2}(t) \given (U_1,U_2)=(u,v)},\quad \forall\; (u,v)\in\interval{0,1}^{(2)},
\end{equation}
and any $t\in\R_+$. In Proposition~\ref{prop:exists}, we show that the coupled system $(X_{1,2}, \Gamma)$ exists in a strong sense and is pathwise unique and that the kernel-valued process $X^{(n)}$ in Theorem~\ref{thm:SGD_to_SDEn} converges to the curve $\Gamma$ in the following sense an $n\to\infty$.

\begin{theorem}\label{thm:SDEn_to_infty}
    Suppose Assumptions~\ref{asmp:R_ell_phi},~\ref{asmp:large_noise}, and~\ref{asmp:phi_cut_lip} hold (see Section~\ref{sec:assumptions}). Then, for any sequence of initial kernels $\big(W^{(n)}_0 \in\Wcal_n\big)_{n\in\Natural}$ that converges in  $L^2\big(\interval{0,1}^{(2)}\big)$ norm $\enorm{}$, i.e., 
    \begin{equation}\label{asmp:initassump}
        \lim_{n\rightarrow \infty}\enorm{W_{0}^{(n)}-W_0}=0,
    \end{equation}
    the process of random kernels $\round{X^{(n)}(t)=K(X_n(t))}_{t\in\R_+}$ obtained from solutions of the SDE~\eqref{eq:RSDE}, converges locally uniformly in the cut norm, in probability, to the curve $\Gamma \colon\R_+ \to \Wcal$, with $\Gamma(0)=W_0$, defined in equation~\eqref{eq:whatisgammat0}  as $n\to\infty$.
\end{theorem}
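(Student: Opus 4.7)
The plan is a synchronous coupling of the matrix SDE $X_n$ with an auxiliary array whose drift is frozen to the limiting McKean--Vlasov drift, followed by a Gronwall argument in cut norm and a cut-norm concentration estimate for random matrices. For simplicity take $\Sigma_n\equiv\beta\mathbf{1}$ as in~\eqref{eq:infinite_SDE0}; the general case under Assumption~\ref{asmp:large_noise} follows by the same scheme. On the probability space of $X_n$, introduce an auxiliary process $Y^{(n)}=\round{Y^{(n)}_{i,j}}_{(i,j)\in\squarebrack{n}^{(2)}}$ of reflected one-dimensional diffusions on $\interval{-1,1}$ driven by the \emph{same} Brownian motions $B_{i,j}$, started at $Y^{(n)}_{i,j}(0)=X_n(i,j)(0)$, but with the frozen drift $\squarebrack{M_n\round{\phi(\Gamma(s))}}_{ij}$---the cell average of $\phi(\Gamma(s))$ over $I_i\times I_j$. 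Since $X_n$ and $Y^{(n)}$ share initial data and Brownian noise, the sup-norm Lipschitz property of the one-dimensional Skorokhod map on $\interval{-1,1}$ yields coordinatewise
\[
\sup_{s\leq t}\abs{X_n(i,j)(s)-Y^{(n)}_{i,j}(s)}\leq C\int_0^t\abs{\squarebrack{M_n\round{\phi(K(X_n(s)))}-M_n\round{\phi(\Gamma(s))}}_{ij}}\diff s.
\]
Averaging over $(i,j)$, using the $L^1$-contractivity of $M_n$ and Assumption~\ref{asmp:phi_cut_lip} (cut-norm Lipschitz continuity of $\phi$ into $L^1$), and bounding the cut norm by the $L^1$ norm, the right-hand side is $\leq CL\int_0^t\cutnorm{K(X_n(s))-\Gamma(s)}\diff s$. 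Setting $M_n(t)\coloneqq\sup_{s\leq t}\cutnorm{K(X_n(s))-\Gamma(s)}$ and $N_n(t)\coloneqq\sup_{s\leq t}\cutnorm{K(Y^{(n)}(s))-\Gamma(s)}$, the triangle inequality gives $M_n(T)\leq N_n(T)+CL\int_0^T M_n(s)\diff s$, and Gronwall's lemma reduces the proof to showing $N_n(T)\to 0$ in probability for each $T>0$.

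To establish $N_n(T)\to 0$, I decompose
\[
K(Y^{(n)}(s))-\Gamma(s)=A_n(s)+B_n(s)+C_n(s),
\]
where $A_n(s)\coloneqq K(Y^{(n)}(s))-\E{K(Y^{(n)}(s))}$, $B_n(s)\coloneqq\E{K(Y^{(n)}(s))}-\Gamma^{(n)}(s)$, and $C_n(s)\coloneqq\Gamma^{(n)}(s)-\Gamma(s)$, with $\Gamma^{(n)}(s)\in\Wcal_n$ the cell-average projection of $\Gamma(s)$. The term $C_n$ vanishes in cut norm uniformly in $s\in\interval{0,T}$ by the martingale convergence theorem for dyadic conditional expectations together with the $L^2$-time continuity of $\Gamma$ established in Proposition~\ref{prop:exists}. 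The term $B_n$ vanishes by the Lipschitz dependence (in the initial condition and the drift) of the mean of a reflected one-dimensional SDE, combined with the $L^2$-convergences $W^{(n)}_0\to W_0$ from~\eqref{asmp:initassump} and $M_n\round{\phi(\Gamma(\cdot))}\to\phi(\Gamma(\cdot))$ (again by dyadic-projection convergence at each $s$).

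The crucial step is controlling $A_n$. The entries $Y^{(n)}_{i,j}(s)-\E{Y^{(n)}_{i,j}(s)}$ are uniformly bounded by $2$ and, since each depends only on the independent Brownian motion $B_{i,j}$ (initial data and drift are deterministic), mutually independent across $(i,j)\in\squarebrack{n}^{(2)}$. A concentration bound for the cut norm of symmetric random matrices with independent bounded mean-zero entries then yields $\cutnorm{A_n(s)}=\BigO{n^{-1/2}}$ at each fixed $s$ with high probability. Promoting this to a bound uniform in $s\in\interval{0,T}$ requires a time-discretization argument, controlling the modulus of continuity of $s\mapsto Y^{(n)}_{i,j}(s)$ uniformly in $(i,j)$ via the Burkholder--Davis--Gundy inequality applied to the driving Brownian martingales. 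The main obstacle will be precisely this uniform-in-time cut-norm concentration for $A_n$: pointwise-in-$s$ concentration is a standard consequence of random-matrix bounds, but lifting to a supremum over $\interval{0,T}$ requires careful chaining over a net of times; a secondary technical point is ensuring that Assumption~\ref{asmp:phi_cut_lip} delivers the cut-norm-to-$L^1$ Lipschitz bound used above to close the Gronwall loop, rather than a weaker cut-to-cut bound.
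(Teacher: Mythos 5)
Your strategy is sound, and it is genuinely different from the paper's. The paper does not freeze the drift at the level of the $n\times n$ system: it first constructs the infinite exchangeable array of reflected diffusions and proves a propagation-of-chaos statement (Proposition~\ref{prop:mck_vlasov}) --- convergence of the sampled $k\times k$ subarray to $[\Gamma(t)]$ in $\delta_\cut$, locally uniformly in time, via $C_4$-homomorphism densities, the counting lemma and an Arzel\`a--Ascoli equicontinuity argument --- and then couples $X_n$ to that array on the event that $k$ sampled uniforms fall in distinct blocks, closing with the sampling lemma~\cite[Lemma 10.6]{lovasz2012large} and a Gr\"onwall bound with a slowly growing $\lambda_k$ (a double limit: $n\to\infty$, then $k\to\infty$). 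You instead run the classical McKean--Vlasov synchronous coupling directly in cut norm against the frozen-drift array $Y^{(n)}$, exploiting that Assumption~\ref{asmp:phi_cut_lip} is a pointwise a.e.\ bound (hence gives an $L^\infty$-, a fortiori $L^1$-, to-cut Lipschitz estimate, so your ``secondary technical point'' is not an issue), plus a cut-norm concentration bound for the independent, bounded, centered entries of $K(Y^{(n)}(s))-\E{K(Y^{(n)}(s))}$. You still need Proposition~\ref{prop:exists} (existence, uniqueness and $d_2$-continuity of $\Gamma$), but you bypass Proposition~\ref{prop:mck_vlasov} and the sampling lemma, avoid the double limit, and the Hoeffding-plus-union-bound step gives an explicit $O(n^{-1/2})$ fluctuation rate that the paper only extracts indirectly in Remark~\ref{rem:Rate}; in exchange, the paper's route handles state-dependent $\Sigma$ and drifts $b(z,W)$ under Assumption~\ref{asmp:b_lip_cut}, and yields the exchangeable-array limit as an object of independent interest.

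Three repairs when writing it up. First, for the uniform-in-time control of your $A_n$, bound the time increments of the reflected paths by the oscillation of their driving semimartingales (restart property of the two-sided Skorokhod map, not the plain two-input Lipschitz bound), and average the $n^2$ independent Brownian oscillation terms using~\cite[Lemma A.4]{slominski2001euler} together with Chebyshev and Borel--Cantelli, exactly as in the equicontinuity step inside the proof of Proposition~\ref{prop:mck_vlasov}; a net of mesh $\delta$ plus the exponential concentration at grid times then closes the chaining. Second, the partitions $\Vcal_n$ are not nested, so ``martingale convergence'' is not the right justification for $C_n\to 0$; use instead that the $\Wcal_n$-projection converges to the identity on $L^2\big(\interval{0,1}^{(2)}\big)$ for each fixed kernel, combined with the $d_2$-continuity of $s\mapsto\Gamma(s)$ and compactness of $\interval{0,T}$. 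Third, your claim that the state-dependent case ``follows by the same scheme'' under Assumption~\ref{asmp:large_noise} alone is too quick: the stochastic-integral difference then requires a cut-norm Lipschitz bound on $\Sigma$ (the paper's Assumption~\ref{asmp:b_lip_cut}) to close the Gr\"onwall loop; since the theorem's limit is the constant-$\beta$ curve of~\eqref{eq:whatisgammat0}, this does not affect the statement you prove, but the extension is not automatic.
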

\begin{remark}
    The assumption $\norm{2}{W_0^{(n)}-W_0}\to 0$ can not be weakened to $\norm{\cut}{W_0^{(n)}-W_0}\to 0$ as $n\to \infty$. To see this, take $\nabla R_n\equiv 0$ 
    and $\Sigma\equiv 1$ and let $W_0\equiv 0$.  It is clear that $\Gamma(t)\equiv 0$ for all $t\geq 0$.  On the other hand, let $\xi$ be a random variable taking values $-1/2$ and $+1$ with probability $2/3$ and $1/3$ respectively. And, let $W_0^{(n)}$ be the step-kernel corresponding to $n\times n$ symmetric random matrix whose entries (on and above the diagonal) are  i.i.d. and has the same distribution as $\xi$. Then, $\norm{\cut}{W_0^{(n)}-W_0}\to 0$ almost surely. However, in this case,  the coordinates of $X_n$ are i.i.d. (up to the matrix symmetry) and have the same distribution as an RBM (reflected at $\pm 1$) with initial distribution $\xi$. In particular, $K(X_n(t))$ converges to $W(t)\equiv \E{X_{n, 1, 2}(t)}$.  It is therefore sufficient to show that $\E{X_{n, 1, 2}(t)}$ is not identically $0$ for a.e. $t\in\R_+$. 
    
    To see this, we argue by contradiction. If $\E{X_{n, 1, 2}(t)}=0$ for all $t\geq 0$ then $\frac{\diff}{\diff t}\E{X_{n, 1, 2}(t)}=0$. Using~\cite[Exercise 1.12, pg-407]{RY}, we obtain that $\frac{\diff}{\diff t}\E{X_{n, 1, 2}(t)}=\frac{2}{3}(p_t(-\frac{1}{2})-p_t(\frac{3}{2}))+\frac{1}{3}\round{p_t(2)-1}\neq 0$, where $p_t$ is the standard heat kernel at time $t$. This yields a contradiction. 
\end{remark}

\begin{remark}
    We should also remark that arranging for $W^{(n)}_0$ such that $\norm{2}{W^{(n)}_0-W_0}\to 0$ as $n\to \infty$ is not difficult. For any $W_0$ and $n\in\Natural$, let $W^{(n)}_0$ be the $L^2\big([0,1]^{(2)}\big)$ projection of $W_0$ on $\Wcal_n$. Then $W^{(n)}_0$ satisfies this condition.
\end{remark}

In Section~\ref{sec:convergence_sde} a more general statement with state-dependent diffusion has been proved (see Proposition~\ref{prop:mck_vlasov}). It is worth noting that presence of noise and the boundary $\set{-1,1}$ in our problem makes it non-trivial.
% To elucidate, consider the following iteration scheme without the boundary projection,
%  \begin{align}\label{eq:NSGD}
%     W_{n,k+1} = W_{n,k} - n^2\tau_{n,k}g_n\round{W_{n,k};\xi_{k+1}} + \tau_{n,k}^{1/2}G_{n,k} ,\qquad k\in\Integer_+ ,\tag{NSGD}
% \end{align}
% starting at some $n\times n$ symmetric matrix $W_{n, 0}$, where $\round{G_{n, k}}_{k\in\Integer_+}$ is a $n\times n$ symmetric matrix valued martingale difference sequence with independent and centered entries. Since for all $k\in\Integer_+$, we have $\lim_{n\to\infty}\cutnorm{G_{n, k}} = 0$, it easily follows that the additive noise does not matter in the limit. But this is not the case when there is a boundary involved as in the iteration scheme~\eqref{eq:PNSGD}.
%To see this, take the simplest case when $g_n = \nabla \ell_n\equiv 0$, and consider the iteration scheme~\eqref{eq:PNSGD}, but without the projection, say starting at $W_{n, 0}\in\Mcal_n$.
% The gradient flow in this case should be a constant curve. The corresponding GD iteration would also stay at $W_{n, 0}$ at all times $t\in\R_+$, for each $n\in\Natural$.
% If we consider the iteration scheme~\eqref{eq:NSGD}, then $W_n$ is a random curve over $n\times n$ symmetric matrices for each $n\in\Natural$ with $\E{W_{n}(t)}=W_{n, 0}$ for all $t\in\R_+$.
To see this, consider~\eqref{eq:RSDE} for a constant function $R_n$ (i.e., $\nabla R_n\equiv 0$) and without the local times, say starting at $W_{n, 0}\in\Mcal_n$. The solution is a symmetric matrix of independent Brownian motions. It can be easily checked that, if $\lim_{n\rightarrow \infty}\cutnorm{W_{n,0} - W_0}=0$, then $\lim_{n\to \infty}\sup_{t\in\interval{0,T}}\cutnorm{X^{(n)}(t)-W_0} = 0$ for any finite $T>0$. However, if we consider~\eqref{eq:RSDE} again with $\nabla R_n\equiv 0$ but with reflection at the boundary, the coordinate processes are independent reflected Brownian motions. In this case the cut limit of $X^{(n)}(t)$ is also the cut limit of the kernel $\E{X^{(n)}(t)}$. But reflecting Brownian motions do not have constant expectations in time due to boundary effect. Hence, the limit of $X^{(n)}(t)$ is not constant in $t$. But, if this limit were a gradient flow, it would be a constant.

\subsection{Scaling limit without added noise}\label{sec:beta_0}
When $\Sigma_n \equiv 0$, equation~\eqref{eq:RSDE} reduces to
\begin{align}
    \diff X_n(t) = -n^2\nabla R_n(X_n(t))\diff t + \diff L^{-}_n(t) - \diff L^{+}_n(t),\quad t\in\R_+,\quad X_n(0) = W_{n,0},\label{eq:RSDE_beta0}
\end{align}
such that $(X_n,L^+_n,L^-_n)$ solves the Skorokhod problem on $\Mcal_n$ (see Section~\ref{sec:RBM} for details). Moreover, it is shown in Section~\ref{sec:convergence_nsgd} that the solution of~\eqref{eq:RSDE_beta0} is the same as the solution of~\eqref{eqn:GF_n} given below. Furthermore, it is shown in~\cite[Theorem 4.4, Theorem 4.14]{oh2021gradient} that if the solution $X_n\colon\R_+\to\Mcal_n$ of
\begin{equation}\label{eqn:GF_n}
    \diff X_n(t) = -n^2\nabla R_n(X_n(t))\circ\indicator{G_n(X_n(t))}{}\diff t,\qquad t\in\R_+,
\end{equation}
exists, where $G_n(A)$ is the subset of $\squarebrack{n}^{2}$ (defined in equation~\eqref{eq:G_n} later in Section~\ref{sec:without_noise}),
% \RaghavS{expand below def.}
% \begin{align*}
%     G_n(A)&\coloneqq \set{\abs{A}<1}\cup \set{A=1, \nabla R_n(A)>0}\cup \set{A=-1, \nabla R_n(A)<0},
% \end{align*}
% for all $A\in\Mcal_n$,
then $X_n$ is a gradient flow on $\Mcal_n$ in a suitable sense. Further, it is shown in~\cite[Theorem 4.17]{oh2021gradient} that under reasonable assumptions on $R$, the sequence of solutions $\round{X_n}_{n\in\Natural}$ of equation~\eqref{eqn:GF_n} obtained for all natural numbers $n\in\Natural$, converge to an absolutely continuous curve $W\colon\R_+\to\Wcal$ (appropriately in the cut metric (see Definition~\ref{def:cut_metric})), which is a curve of maximal slope~\cite{ambrosio2005gradient} (a.k.a. gradient flow) of $R$, as $n\to\infty$. This yields the following.
\begin{theorem}\label{thm:zeroTempLimit}
    Suppose Assumptions~\ref{asmp:R_ell_phi} and~\ref{asmp:small_noise} hold (see Section~\ref{sec:assumptions}). Let $R$ be continuous in the cut norm, and $\lambda$-semiconvex with respect to $\enorm{}$ for some $\lambda\in\R$ (see Section~\ref{sec:background_graphons} for definitions). For every $n\in\Natural$, let $X_n\colon \R_+ \to \Mcal_n$ be a gradient flow of $R_n$ staring at $X_n(0) = W_{n,0} = M_n\big(W^{(n)}_0\big)\in\Wcal_n$, and satisfying equation~\eqref{eq:RSDE_beta0}. If $\big(W^{(n)}_0\big)_{n\in\Natural}$ converges to $W_0\in\Wcal$ in the cut norm, then,
    \[
        \lim_{n\to \infty}\sup_{s\in[0,T]}\norm{\cut}{K\round{X_n(s)}-W(s)} =  0,
    \]
    for any $T>0$, where $W$ defined as $W(t) \coloneqq W_0-\int_0^t\phi(W(s))\indicator{G_{W(s)}}{}$ for $t\in\R_+$, is the gradient flow for $R$.
\end{theorem}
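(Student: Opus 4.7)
My plan is to reduce Theorem~\ref{thm:zeroTempLimit} to the convergence result~\cite[Theorem 4.17]{oh2021gradient} by showing that, when $\Sigma_n\equiv 0$, the solution of the Skorokhod-reflected ODE~\eqref{eq:RSDE_beta0} coincides with the projected gradient flow~\eqref{eqn:GF_n}, for which the cited theorem already provides cut-norm convergence of the embedded kernel curves to the graphon gradient flow $W$. In spirit the theorem is a corollary, but two things have to be checked: the equivalence of the two dynamics, and that the hypotheses of Theorem~\ref{thm:zeroTempLimit} imply the hypotheses of~\cite[Theorem 4.17]{oh2021gradient}.

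\textbf{Step 1 (equivalence of~\eqref{eq:RSDE_beta0} and~\eqref{eqn:GF_n}).} The Skorokhod decomposition on the product set $\Mcal_n$ reads, coordinatewise,
\[
    X_n(t)_{ij} = W_{n,0}(i,j) - \int_0^t n^2\partial_{ij}R_n(X_n(s))\,\diff s + L^-_n(t)_{ij} - L^+_n(t)_{ij},
\]
with $L^\pm_n(\cdot)_{ij}$ non-decreasing and supported on the times at which $X_n(\cdot)_{ij}$ touches the corresponding face of the box. Since $\Mcal_n$ is a product of one-dimensional intervals (respecting matrix symmetry), uniqueness for the one-dimensional Skorokhod problem applies independently to each coordinate pair $(i,j)$. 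When the flow is in the interior of that interval both local times are inactive; when $X_n(\cdot)_{ij}$ is pressed against an outward face, the active local time cancels exactly the outward component of $-n^2\partial_{ij}R_n$. This is precisely the projection encoded by $\indicator{G_n(X_n(t))}{}$ in~\eqref{eqn:GF_n}, which deletes from the drift those entries whose unconstrained gradient step would leave $\Mcal_n$. Hence the two right-hand sides agree a.e.\ along either solution, and Skorokhod uniqueness identifies the two solutions. This step is the content alluded to at the start of Section~\ref{sec:convergence_nsgd} and needs only a routine verification.

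\textbf{Step 2 (appeal to~\cite[Theorem 4.17]{oh2021gradient}).} Under Assumption~\ref{asmp:R_ell_phi} the consistency relation~\eqref{eq:scaling_gradient}, $n^2\nabla R_n=M_n\circ\phi\circ K$, holds for all $n$, so the matrix-valued flows~\eqref{eqn:GF_n} are precisely the finite-dimensional gradient flows in~\cite{oh2021gradient}. The cut-norm continuity of $R$ and $\lambda$-semiconvexity with respect to $\enorm{\slot{}}$ are exactly the regularity hypotheses under which~\cite{oh2021gradient} constructs an absolutely continuous curve of maximal slope $W\colon\R_+\to\Wcal$ satisfying $W(t)=W_0-\int_0^t\phi(W(s))\indicator{G_{W(s)}}{}\,\diff s$, and proves that for any initial data $W_0^{(n)}\in\Wcal_n$ converging to $W_0$ in cut norm one has
\[
    \lim_{n\to\infty}\sup_{s\in[0,T]}\cutnorm{K(X_n(s))-W(s)}=0
\]
for every $T>0$. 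Combined with Step~1 this yields the conclusion of Theorem~\ref{thm:zeroTempLimit}.

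\textbf{Main obstacle.} The substantive work is entirely in Step 1; once the Skorokhod solution is identified with the projected gradient flow of~\cite{oh2021gradient}, the theorem follows without further estimates. The delicate point in that identification is behavior at corners of $\Mcal_n$ where several coordinates hit the boundary simultaneously: here one must use that $\Mcal_n$ is a product so the one-dimensional Skorokhod uniqueness applies coordinatewise, matching the coordinatewise definition of $G_n$. The secondary bookkeeping task is to verify that Assumption~\ref{asmp:R_ell_phi} (plus the stated semiconvexity and cut-continuity) produces the precise input needed by~\cite[Theorem 4.17]{oh2021gradient}, which is straightforward since both frameworks are built around the same Fr\'echet-like derivative $\phi$.
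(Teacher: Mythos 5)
Your proposal is correct and follows essentially the same route as the paper: Section~\ref{sec:without_noise} identifies the solution of~\eqref{eq:RSDE_beta0} with the projected flow~\eqref{eqn:GF_indicator} by exhibiting explicit local times $L_n^{\pm}$ (integrals of $n^2\nabla R_n$ against the boundary indicators in~\eqref{eq:L^pm_def}), checking the Skorokhod conditions, and invoking uniqueness of the Skorokhod problem on $\Mcal_n$, after which the convergence is exactly the citation of~\cite[Theorem 4.17]{oh2021gradient}. The ``routine verification'' you defer in Step 1 is precisely this explicit construction, so the two arguments coincide.
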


We should mention that our method allows us to also obtain a non-asymptotic rate of convergence. We refer the reader to Remark~\ref{rem:Rate} for details.

As an example, consider the function $R_n$ considered at the beginning of Section~\ref{sec:intro}. $R_n$ is the restriction to $\Mcal_n^{0}$ of the function $R$ on $\Wcal_0\coloneqq \set{W\in \Wcal\given W(x, y)\in [0,1] \text{ for a.e. } (x,y)\in[0,1]^{(2)} }$ given by 
\[
    R(W) = \inv{2}(H_{\mathrel{-}}(W)-e)^2+\inv{2}(H_{\triangle}(W)-\tau)^2+\Ecal(W),
\]
where $\Ecal\coloneqq\int_0^{1}\int_0^{1} h(W(x, y))\diff x\diff y$. The function $H_{F}$ is the homomorphism density of $F$~\cite[Section 5.1.2]{oh2021gradient}. The function $R$ satisfies all the assumptions of Theorem~\ref{thm:zeroTempLimit}. See Section~\ref{subsec:EdgeTriangle} for details.

\subsection{SGD and permutation symmetries in Deep Neural Networks (DNNs)}
We end this section with a significant example where the permutation invariant functions arise, namely, DNNs.
DNNs typically consist of a sequence of matrices that share row/column labels with their adjacent ones. Most modern DNNs possess permutation symmetries in their parametric representations. That is, their output is invariant under permutations applied to the rows/columns of the matrices appearing in DNN representation. The goal is to obtain the sequence of matrices that minimizes the risk function $R_n$ for $n\in\Natural$. This can be thought of as a generalization of the linear regression example discussed in the introduction and in Section~\ref{sec: Linear regression}. Authors in~\cite{ainsworth2022git} empirically study the effectiveness of SGD in optimizing the non-convex DNN risk functions $R_n$ for large $n\in\Natural$. For simplicity, consider the special case when the DNN is parameterized through a single finite symmetric matrix and therefore does not involve shared labels. Let $\round{U_{n,k}}_{k\in\Integer_+}$ and $\round{V_{n,k}}_{k\in\Integer_+}$  be the SGD iterations, starting at two independent initializations, say, $U_{n,0} \neq V_{n,0}$. %sampled from the same distribution on $\interval{-1,1}$ 
Authors in~\cite{ainsworth2022git} observe that $\round{U_{n,k}}_{k\in\Integer_+}$ and $\round{V_{n,k}}_{k\in\Integer_+}$ can be ``aligned'' by optimizing over the set of all permutations. That is, for every $k\in\Integer_+$, they solve for
\[
    \pi^*_k \in \argmin_{\pi_k\in S_n}\normF{U_{n,k} - V_{n,k}^{\pi_k}}^2,
\]
where $\normF{}$ denotes the Frobenius norm, $S_n$ is the set of all permutations of $[n]$, and $V_{n,k}^{\pi_k}$ is the matrix $V_{n,k}$ with rows and columns relabeled by the permutation $\pi_k\in S_n$. The authors observe an emergent property of SGD called ``linear mode connectivity'' (LMC)~\cite{frankle2020linear}. This property essentially says that $R_n$ does not fluctuate a lot on $W_{n,k}(\lambda)$ for large $k\in\Integer_+$, where
\[
    W_{n,k}(\lambda) = (1-\lambda)U_{n,k} + \lambda V_{n,k}^{\pi_k^*}, \qquad \lambda\in [0, 1].
\]
%That is $R_n\round{W_{n,k}(\lambda)} - R_n(U_{n,k})$ and $R_n\round{W_{n,k}(\lambda)} - R_n\big(V_{n,k}^{\pi^*_k}\big)$ are not large for any $\lambda\in[0,1]$. 
Further, they observe that $R_n(W_{n,k}(\lambda))$ approaches a constant uniformly on $\lambda\in[0,1]$ as $n$ goes to infinity. Authors in~\cite{benzing2022random} observe through experiments that for a fixed and large enough $k\in\Integer_+\setminus\set{0}$, the permutation $\pi^*_k$, has negative convexity gap
\[
    R_n\round{(1-\lambda)U_{n,0} + \lambda V_{n,0}^{\pi_k^*}} - \squarebrack{(1-\lambda) R_n(U_{n,0}) + \lambda R_n\round{V_{n,0}^{\pi_k^*}}}.
\]
Following these empirical observations and the hypothesis made by the authors in~\cite{entezari2022the}, it makes sense to consider DNNs up to their permutation symmetries, and as a consequence, study limiting behaviors of stochastic optimization algorithms over the space of graphons. This requires some generalization of our theory and is an important direction for future work. 

%\clearpage
\section{Background, Assumptions and Setup}\label{sec:background}

Since we want to obtain continuous time scaling limits of the iterative schemes defined in Definition~\ref{def:PGD} and Definition~\ref{def:PNSGD}, we will use piecewise constant interpolations.
\begin{definition}[Piecewise constant interpolation]\label{def:interpolation}
    Given a sequence $\round{a_k}_{k\in\Integer_+}$ over any domain, and a sequence of positive step sizes $\tauvec = \round{\tau_{k}}_{k\in\Integer_+}$, we can define a piecewise constant interpolation of $\round{a_k}_{k\in\Integer_+}$ as a right-continuous curve $a\colon \R_+ \to \set{a_k}_{k\in\Integer_+}$ as
    \begin{align*}
        a(t) \coloneqq a_{k} , \quad \text{if }\quad t\in\left\lbrack t_{k} ,t_{k+1}\right) ,
    \end{align*}
    for some $k\in\Integer_+$, where $t_{0} = 0$ and $t_{k}\coloneqq \sum_{j=0}^{k-1}\tau_{j}$ for any $k\in\Natural$.
\end{definition}
We now provide a background on graphons (see~\cite{lovasz2012large,janson2010graphons} for broader expositions).
\subsection{Background on Graphons}\label{sec:background_graphons}
Consider the set $\mathcal{S}$ of all bounded, Borel measurable function $W\colon [0,1]^{(2)}\to \mathbb{R}$ such that $W(x, y)=W(y, x)$ for a.e. $(x,y)\in\interval{0,1}^{(2)}$. For any function $W\in \mathcal{S}$ one can define the \emph{cut norm}, $\cutnorm{}\colon\Scal \to \R_+$ as 
\begin{align}
    \cutnorm{W} \coloneqq \sup_{S, T\subseteq [0, 1]}\abs{\int_{S\times T}W(x, y)\diff x\diff y}, \qquad W\in\Scal,\label{eq:cut_norm}
\end{align}
where the supremum is taken over Borel measurable sets $S,T\subseteq\interval{0,1}$. The cut norm was first introduced in~\cite{frieze1999quick} in the context of matrices and was later extended to $\Scal$ in~\cite{borgs2008convergent}. In the following definitions, let $\Tcal$ denote the set of all measure preserving transformations on $\interval{0,1}$ equipped with the Lebesgue measure. We say $W_1\cong W_2$ (i.e., $W_1$ and $W_2$ are {\em weakly isomorphic}) if there exists $W\in \mathcal{S}$ and measure preserving transformations $\varphi_1, \varphi_2 \in \Tcal$ such that for $W^{\varphi_i}\in \Scal$ defined as $W^{\varphi_i}(x,y) \coloneqq W\round{\varphi_i(x),\varphi_i(y)}$ for a.e. $(x,y)\in\interval{0,1}^{(2)}$ and $i\in[2]$, $W_1=W^{\varphi_1}$, and $W_2=W^{\varphi_2}$.

The cut norm $\cutnorm{}$ induces a metric called the \emph{cut metric}, denoted by $\delta_{\cut}$, when restricted to the quotient space $\widehat{\Scal}\coloneqq\mathcal{S}/{\cong}$. We denote the equivalence class of $W\in\Scal$ under weak isomorphism ($\cong$) as $\squarebrack{W}\coloneqq \set{U\in\Scal \given U\cong W}\in \widehat{\Scal}$. We now define the cut metric.

\begin{definition}[Cut Metric~{\cite[Section 3.2]{borgs2008convergent}}]\label{def:cut_metric}
    Let $[W_1], [W_2]\in \widehat{\Scal}$. Then, 
    \[
        \delta_{\cut}([W_1], [W_2])\coloneqq \inf_{\varphi, \psi\in\Tcal} \cutnorm{W_1^{\varphi}-W_2^{\psi}} .
    \]
    % where the infimum is taken over all measure preserving transformations $\varphi, \psi\in \Tcal$.
\end{definition}
More generally, given any norm $\norm{}{}$ on $\mathcal{S}$, one can define an induced metric $\delta_{\norm{}{}}$ on $\widehat{\Scal}$ as
\begin{align}
    \delta_{\norm{}{}}([W_1], [W_2])\coloneqq \inf_{\varphi, \psi\in\Tcal} \norm{}{W_1^{\varphi}-W_2^{\psi}} ,
\end{align}

In particular, the induced metric due to the $L^2$ norm, $\enorm{}\colon L^2(\interval{0,1}^{(2)})\to\R_+$, is called the \emph{invariant $L^2$ metric}, $\delta_2$, and it would be used in our discussion.

As defined in Section~\ref{sec:intro}, the set of kernels $\Wcal\subset \mathcal{S}$ is the set of measurable, symmetric functions $W\colon [0,1]^{(2)}\to [-1, 1]$ and correspondingly $\Graphons\coloneqq\Wcal/{\cong}$ is the set of graphons. For most of our discussion, we will be concerned only with the space of graphons equipped with either the cut metric $\delta_{\cut}$ or the invariant $L^2$ metric $\delta_{2}$. The metrics on $\Scal$ induced by the norms $\cutnorm{}$ and $\enorm{}$ with be denoted by $d_\cut$ and $d_2$ respectively.

For every $n\in\Natural$, the set $\Mcal_n$ can be naturally identified with a subset of $\Wcal$. Let $\Vcal_n\coloneqq \set{V_i}_{i\in\squarebrack{n}}$ be a partition of the interval $\interval{0,1}$ into contiguous intervals of equal length (Lebesgue measure). We define the set of kernels $\Wcal_n\subset\Wcal$ which contain kernels which are constant a.e. over sets in $\Vcal_n\times \Vcal_n$.

We note some crucial properties of these metric spaces that will be frequently used throughout this paper even without explicitly mentioning. 
\begin{enumerate}
    \item Properties of $\delta_\cut$:
        \begin{enumerate}
            \item The topology induced by the cut metric $\delta_\cut$ on $\Graphons$ is compact~\cite{Lovsz2007SzemerdisLF},~\cite[Section 9.3]{lovasz2012large}.
            \item Convergence in the cut metric is related to the convergence of homomorphism functions via the counting and the inverse counting lemmas~\cite[Section 7.2, Lemma 10.23, Lemma 10.32]{lovasz2012large}.
        \end{enumerate}
    \item Properties of $\delta_{2}$:
        \begin{enumerate}
            \item The metric space $(\Graphons,\delta_2)$ is a geodesic metric space~\cite[Theorem 3.5]{oh2021gradient}.
            \item The metric space $(\Graphons,\delta_2)$ is complete and separable but not compact.
            \item Convergence in $\delta_2$ implies convergence in $\delta_\cut$, implying that the topology generated by $\delta_2$ is stronger that the one generated by $\delta_\cut$ on $\Graphons$.
        \end{enumerate}
\end{enumerate}

As $(\Graphons, \delta_{2})$ is a geodesic metric space, it therefore makes sense to talk about \emph{geodesically convex} or \emph{geodesically semiconvex} functions.

% \begin{definition}[Constant speed geodesics w.r.t. $\delta_2$]\label{def:constant_speed_geodesics}
%     A curve $\omega\colon \interval{0,1}\to \Graphons$ is a constant speed geodesic w.r.t. $\delta_2$ if for all $0\leq r\leq s\leq 1$,
%     \begin{equation}
%         \delta_2\round{\omega(r),\omega(s)} = \delta_2\round{\omega(0),\omega(1)}\round{s-r}.\label{eqn:def_constant_speed}
%     \end{equation}
% \end{definition}

% \begin{definition}[$\lambda$-semiconvexity along curves w.r.t. $\delta_2$]\label{def:lambda_cvx_along_curve}
%     A function $R\colon \Graphons\to\R$ is said to be $\lambda$-semiconvex with respect to $\delta_2$ along a curve $\omega\colon \interval{0,1}\to \Graphons$ for some $\lambda\in\R$, if
%     \begin{align}
%         \round{R\circ \omega}(t) &\leq (1-t)\round{R\circ \omega}(0) + t\round{R\circ \omega}(1) - \inv{2}\lambda t(1-t)\delta_2^2\round{\omega(0),\omega(1)}\; ,\label{eq:lambda_cvx_along_curve}
%     \end{align}
%     for all $t\in\interval{0,1}$. Particularly, if the above inequality holds for $\lambda = 0$, then we say that $R$ is convex with respect to $\delta_2$ along the curve $\omega$.
% \end{definition}

\begin{definition}[$\lambda$-geodesic semiconvexity w.r.t. $\delta_2$]\label{def:semiconvexity}
    A function $R\colon \Graphons\to\R$ is $\lambda$-geodesically semiconvex with respect to $\delta_2$, if for any $[W_0],[W_1]\in \Graphons$ there exists a constant speed geodesic $\omega\colon\interval{0,1}\to \Graphons$ w.r.t. $\delta_2$ with $\omega(0)=[W_0]$ and $\omega(1)=[W_1]$ such that $R$ is $\lambda$-semiconvex on $\omega$ with respect to $\delta_2$ for some $\lambda\in\R$. (See~\cite[Definition 2.14-2.16]{oh2021gradient}).
\end{definition}

In Section~\ref{sec:intro}, we noted in equation~\eqref{eq:scaling_gradient} that Euclidean gradient $\nabla R_n$ of $R_n$ is closely related to what we call the Fr\'echet-like derivative of $R\colon\Wcal \to \R$. We state its definition below.
\begin{definition}[Fr\'echet-like derivative on $\Wcal$]\label{def:frechet_like_derivative}
    The Fr\'echet-like derivative of $R\colon \Wcal\to\R$ at $V\in\Wcal$ is given by $\phi(V) \in L^\infty\big(\interval{0,1}^{(2)}\big)$ that satisfies the following condition,
    \begin{align}
      \lim_{\substack{W\in\Wcal,\\\norm{2}{W-V}\to 0}}\frac{R(W) - R(V) - \round{ \inner{\phi(V),W}- \inner{\phi(V),V}}}{\norm{2}{W-V}} &= 0,\label{eq:frechet_limit_def}
    \end{align}
    where $\inner{\slot{},\slot{}}$ is the usual inner product on $L^2\big(\interval{0,1}^{(2)}\big)$. If $R$ admits a Fr\'echet-like derivative at every $V\in\Wcal$, we say that $R$ is Fr\'echet differentiable.
\end{definition}
\begin{remark}
    Note that here we define the Fr\'echet-like derivative for all functions if it exists, unlike as defined in~\cite[Definition 4.6]{oh2021gradient} where it is only defined for invariant functions. This is done so to allow $\ell(\slot{};\xi)$ (see item~\ref{item:small_noise} in Section~\ref{sec:intro}) to be Fr\'echet-differentiable for all $\xi\in\Dcal$ despite it not necessarily being an invariant function. %This restricts us from concluding that $[V]\in\Graphons$ and $[(D_\Wcal \ell)(V;\xi)]$ are `coupled' graphons for $V\in\Wcal$. Since $R = \Exp{\xi\sim\Dcal}{\ell(\slot{})}$ is invariant, both definitions agree, and it allows us to view $[V]$ and $[D_\Wcal R(V)]$ as coupled graphons.
\end{remark}

The scaling limit as we obtain in Theorem~\ref{thm:SDEn_to_infty}, under certain assumptions can be shown to be absolutely continuous with respect to $d_2$ (see Proposition~\ref{prop:velocity}). We state its definition for the sake of completeness.
\begin{definition}\label{def:AC}
    A curve $W\colon\R_+ \to \Wcal$ is absolutely continuous with respect to $d_2$ if there exists $m\in L^1(\R_+)$ such that for all $0\leq r<s<\infty$,
    \[
        d_2(W(r),W(s)) = \enorm{W(r)-W(s)} \leq \int_r^s m(t) \diff t.
    \]
    The set of all absolutely continuous curves on $(\Wcal,d_2)$ will be denoted by $\mathrm{AC}(\Wcal,d_2)$.
\end{definition}

\subsection{Assumptions}\label{sec:assumptions}
In this section we state all the required assumptions we need to prove our results (see Theorem~\ref{thm:SGD_to_SDEn} and Theorem~\ref{thm:SDEn_to_infty}).

\begin{assumption}\label{asmp:R_ell_phi}
    We make following assumptions on $R$, $g$ and $\phi$:
    \begin{enumerate}
        \item\label{item:Rn_C1} For every $n\in\Natural$, the function $R_n$ is in $C^1(\Mcal_n)$ up to the boundary of $\Mcal_n$.
        \item\label{item:phi_lip} The map $\phi$ is $\kappa_2$-Lipschitz with respect to $\enorm{}$, for some constant $\kappa_2\in\R_+$. That is,
        \[
            \enorm{\phi(W_1)-\phi(W_2)} \leq \kappa_2\enorm{W_1-W_2}, \qquad \forall\ W_1,W_2\in\Wcal.
        \]
        \item For every $n\in\Natural$, the function $g_n(\slot{};\xi) = g(\slot{};\xi) \circ K$ is in $C^0\round{\Mcal_n}$ up to the boundary of $\Mcal_n$ for all $\xi\in \Omega$.
        % \item Derivative and expectations commute on $\ell_n$, i.e., $\nabla R_n = \Exp{\xi\sim\Dcal}{\nabla \ell_n\round{\slot{}; \xi}}$.
    \end{enumerate}
\end{assumption}

\begin{assumption}\label{asmp:small_noise}
    We assume the following about the ``small noise''.% For every $n\in\Natural$.
    \begin{enumerate}
        % \item 
        % The stochastic gradient $\nabla \ell_n(\slot{};\xi)$ is pointwise bounded by an integrable function on $\Omega$ for a.e. $\xi\sim\Dcal$, implying $\nabla R_n = \Exp{\xi\sim\Dcal}{\nabla \ell_n\round{\slot{}; \xi}}$ by the Leibniz integral rule.
        %\item\label{asmp:bdd_var} The centered random variable $n^2\nabla \ell_n(\slot{};\xi) - n^2\nabla R_n$ for $\xi\sim\Dcal$ has uniformly bounded variance. That is, there exists $\sigma \geq 0$ such that 
        %\[
        %    \Exp{\xi\sim\Dcal}{\inv{n^2}\normF{n^2\nabla \ell_n(A_n;\xi) - n^2\nabla R_n(A_n)}^2} \leq \sigma^2,
        %\]
        %for all $A_n\in\Mcal_n$.
        \item Law of the random variable $g(W;\xi)$ for $\xi\sim\Dcal$ is invariant under measure preserving transformations for all $W\in\Wcal$, i.e., $\mathrm{Law}\round{g(W;\xi)} = \mathrm{Law}\round{g(W^\varphi;\xi)}$ for all $\varphi\in\Tcal$. %We denote by $\Tcal$, the set of all measure preserving transformations on $\interval{-1,1}$ equipped with the Lebesgue measure.
        \item\label{asmp:bdd_var} The random variable $g(\slot{};\xi)$ for $\xi\sim\Dcal$ has uniformly bounded variance over all finite dimensional kernels. That is, there exists $\sigma \geq 0$ such that for all $A\in\cup_{n\in\Natural}\Wcal_n$,
        \[
            \Exp{\xi\sim\Dcal}{\enorm{g(A;\xi) - \phi(A)}^2} \leq \sigma^2.
        \]
        % where $(D_\Wcal \ell)(A;\xi)$ is the Fr\'echet-like derivative (see definition~\ref{def:frechet_like_derivative}) of $\ell(\slot{};\xi)$ at $A$.
    \end{enumerate}
    % \RaghavS{We should expect a scaling in $n$ for assumptions made on $\Sigma_n$, just like we see in equation~\eqref{eq:grad_Rn_lipschitz}. This should be consistent with the Lipschitz constant $\kappa_2$ of $\Sigma\colon\Wcal \to L^\infty(\interval{0,1}^{(2)})$ w.r.t. $\enorm{}$, its uniform bound of $M_\infty$ w.r.t. $\norm{\infty}{}$.}
\end{assumption}
% \begin{assumption}\label{asmp:large_noise}
%     We assume the following about the ``large noise'' for every $n\in\Natural$.
%     The operator $\Sigma_n\colon \Mcal_n \to \R_+^{\squarebrack{n}^{(2)}}$ is $\kappa_2$-Lipschitz with respect to $\normF{}$, i.e., for every $A_n,B_n\in\Mcal_n$, and $G_n\in\Rd{\squarebrack{n}^{(2)}}$,
%     \[
%         \normF{\Sigma_n(A_n)\circ G_n -  \Sigma_n(B_n)\circ G_n} \leq \kappa_2 \normF{A_n-B_n}\normF{G_n}.
%     \]
% \end{assumption}
\begin{assumption}\label{asmp:large_noise}
    We assume the following on the ``large noise'' for every $n\in\Natural$.
    \begin{enumerate}
        \item There exists a function $\Sigma\colon\Wcal \to L^\infty(\interval{0,1}^{(2)})$ such that the diffusion coefficient functions $\round{\Sigma_n}_{n\in\Natural}$ are restrictions of $\Sigma$, i.e., for every $n\in\Natural$, $\Sigma_n =  M_n \circ \Sigma \circ K$ on $\Mcal_n$.
        \item The map $\Sigma\colon\Wcal \to L^\infty(\interval{0,1}^{(2)})$ is $\kappa_2$-Lipschitz in $\enorm{}$ and uniformly bounded in $\norm{\infty}{}$ by some constant $M_\infty\in\R_+$, i.e., for all $U,V\in\Wcal$,
    \begin{align*}
        \enorm{\Sigma(U)-\Sigma(V)} &\leq \kappa_2\enorm{U-V}, \qquad \text{and}\qquad \norm{\infty}{\Sigma(U)} \leq M_\infty.
    \end{align*}
    \end{enumerate}
    % Further, following Assumption~\ref{asmp:large_noise}, 
\end{assumption}

% \SP{(2) is not an assumption but a corollary of (1). It should be removed from the list and remarked outside the Assumption environment.}

\begin{assumption}\label{asmp:phi_cut_lip}
    There exists a constant $\kappa_{\cut}\in\R_+$ such that, for almost every $(x,y) \in \interval{0,1}^{(2)}$, the map $\phi_{x,y} \coloneqq \phi(\slot{})(x,y)$ is $\kappa_{\cut}$-Lipschitz in cut norm $\cutnorm{}$. That is, for every $U,V\in\Wcal$,
    % \begin{equation*}\label{asmp:lipcut}
    \[
        \abs{\phi_{x,y}(U) - \phi_{x,y}(V)} \le \kappa_{\cut} \cutnorm{U-V}.
    \]
    % \end{equation*}
\end{assumption}

% Note that following Assumption~\ref{asmp:large_noise} and Assumption~\ref{asmp:sigma_n}, since $\Mcal_n$ is a compact subset of $\Rd{\squarebrack{n}^{(2)}}$, the operator $\Sigma_n$ is uniformly bounded, i.e., there exists a constant $M_\infty\in\R_+$ such that for every $A_n\in\Mcal_n$,
% \[
%     \max_{(i,j)\in\squarebrack{n}^{(2)}}\abs{\Sigma_n(A_n)(i,j)} \leq M_\infty.
% \]

\subsection{System of reflected diffusions}\label{sec:RBM}
For $n\in\Natural$, consider the domain $\Mcal_n$. Notice that $\Mcal_n$ is a cube, and is closed with respect to the usual topology.
% and $\pdiff \Mcal_n = \Mcal_n \cap \round{\cup_{(i,j)\in\squarebrack{n}^{(2)}} \set{A\in\Mcal_n \given \abs{A_{i,j}}=1 }}$. The outer normal $\eta_n\colon \pdiff \Mcal_n \to \Rd{\squarebrack{n}^{(2)}}$ to every point $A\in\pdiff \Mcal_n$ is defined as
% \[
%     \eta_n(A)_{i,j} \coloneqq \begin{cases}
%         \sign{A_{i,j}} & \text{if } \abs{A_{i,j}} = 1\\
%         0 & \text{if }\abs{A_{i,j}} < 1
%     \end{cases} .
% \]
Consider the SDE:
\begin{align}
    \diff X_{n}(t) &= -n^2\nabla R_n(X_{n}(t))\diff t + \Sigma_n(X_n(t))\circ \diff B_{n}(t) + \diff L_{n}^-(t) - \diff L_{n}^+(t),\label{eq:SDE_with_L}
\end{align}
for $t\in\interval{0,T}$ for some fixed $T\in\R_+$ and starting at $X_n(0)=X_{n,0}\in \Mcal_n$. Here $\Sigma_n$ is a map from $\Mcal_n$ to the set of $n\times n$ symmetric matrices with non-negative entries, $B_{n}$ is a $n\times n$ symmetric matrix valued process containing a set of standard Brownian motions $\round{B_{n,(i,j)}}_{(i,j)\in\squarebrack{n}^{(2)}}$ which are independent up to matrix symmetry, and the processes $L_{n}^-$ and $L_{n}^+$ are local times at the boundary. More precisely, they satisfying the following conditions:
\begin{enumerate}
    \item The processes $X_{n}$, $L_{n}^+$ and $L_n^-$ are adapted processes.
    \item The process $L_{n}^-$ and $L_n^+$ are coordinatewise non decreasing processes a.e.
    % \item Define the measure $\mu_L$ on $\interval{0,T}$ as $\mu_L\round{\interval{0,t}} = \norm{\rm TV}{L_n}(t)$ for $t\in\interval{0,T}$, then
    % \[
    %     \mu_L\round{\set{t\in \interval{0,T} \given X_{n}(t)\in \Mcal_n^\circ} } = 0 ,
    % \]
    % where $\Mcal_n^\circ$ denotes the interior of $\Mcal_n$,
    \item For every $(i,j)\in\squarebrack{n}^{2}$,
    % \[
    %     L_{n}(t) = \int_0^t \eta\round{X_{n}(s)}\mu_L(\diff s) .
    % \]
    \begin{align}
        \begin{split}
            \int_0^\infty \indicator{}{X_{n,(i,j)}(t) > -1} \diff L_{n,(i,j)}^-(t) &= 0, \qquad\text{and}\quad\\
            \int_0^\infty \indicator{}{X_{n,(i,j)}(t) < +1} \diff L_{n,(i,j)}^+(t) &= 0.
        \end{split}
    \end{align}
\end{enumerate}
% For our purposes, we will define the processes $L^+_{n}$ and $L^-_{n}$ as
% \begin{align*}
%         L^+_{n}(t) \coloneqq \round{L_{n}(t)}^{-} ,\qquad L^-_{n}(t) \coloneqq \round{L_{n}(t)}^{+} ,\qquad t\in\R_+ ,
% \end{align*}
% such that the SDE in equation~\eqref{eq:SDE_with_L} is equivalent to
% \begin{align}
%     \diff X_{n}(t) &= -n^2\nabla R_n(X_{n}(t)) + \beta \diff B_{n}(t) - \diff L_{n}^+(t) + \diff L_{n}^-(t) ,\label{eq:master_SDE}
% \end{align}
% with initial condition $X_{n}(0) = X_{n,0}\sim \rho_{n,0}\in \Pcal_2\round{\Mcal_n}$.
We say that $\round{X_{n},L_{n}^+,L_{n}^-}$ solves the Skorokhod problem with respect to the set $\Mcal_n$. Following~\cite[Definition 1.2]{kruk2007explicit}, the strong solution $\round{X_{n},L_{n}^+,L_{n}^-}$ of the Skorokhod problem exists and is unique if $n^2\nabla R_n$ and $\Sigma_n$ are Lipschitz with respect to $\normF{}$ (following Assumption~\ref{asmp:R_ell_phi}, Assumption~\ref{asmp:large_noise} and equation~\eqref{eq:scaling_gradient}).%\RaghavS{If $n^2\nabla R_n$ is bounded/Lipschitz/uniform linear growth?}.\Tripathi{Here $n$ is fixed dimension. So the condition is only on $R_n$. We don't need uniformity in dimension.}

% Since $n^2\nabla R_n$ is Lipschitz continuous on the compact set $\Mcal_n$, it follows from~\cite[Lemma C.1]{javanmard2020analysis} that if $\mathrm{Law}(X_{n,0}) = \rho_{n,0}\in\Pcal_2\round{\Mcal_n}$, then the Skorokhod problem admits a unique solution $X_{n}$ with continuous paths.

\subsubsection{The Lipschitz property of the Skorokhod map}
Let $Y_1$ and $Y_2$ be two real valued stochastic processes. Let $\Lambda_{\interval{-1,1}}$ denote the Skorokhod map that maps the set of c\`adl\`ag functions on $\interval{0,T}$ to itself. If $(X_1 \coloneqq \Lambda_{\interval{-1,1}}(Y_1),L_1^+,L_1^-)$ and $(X_2 \coloneqq \Lambda_{\interval{-1,1}}(Y_2),L_2^+,L_2^-)$ solve the Skorokhod problem with respect to the set $\interval{-1,1}$, then the Skorokhod map $\Lambda_{\interval{-1,1}}$ is $4$-Lipschitz under the uniform metric~\cite[Corollary 1.6]{kruk2007explicit}, i.e.,
\begin{align}
    \sup_{t\in\interval{0,T}}\abs{X_1(t) - X_2(t)} \leq 4\sup_{t\in\interval{0,T}}\abs{Y_1(t) - Y_2(t)}, \qquad\forall\ T\in\R_+.\label{eq:skorokhod_lipschitz}
\end{align}

% Following~\cite[Theorem 2.2]{dupuis1991lipschitz}, if $(X_n^{(1)},L^{(1)+}_n,L^{(1)-}_n)$ and $(X_n^{(2)},L^{(2)+}_n,L^{(2)-}_n)$ solve the Skorokhod problem with respect to the set $\Mcal_n$, and $X^{(i)}_n$ be obtained from the process $Y^{(i)}_n$ defined as
% \[
%     Y^{(i)}_n(t) = X^{(i)}_n(0) - \int_0^t n^2\nabla R_n(X^{(i)}_{n}(s))\diff s + \beta B_{n}(t),\qquad t\in\R_+,
% \]
% for $i\in\set{1,2}$ by applying the Skorokhod map, then
% \begin{align}
%     \sup_{t\in\interval{0,T}}\normF{X_n^{(1)} - X_n^{(2)}} &\leq 2n \sup_{t\in\interval{0,T}}\normF{Y_n^{(1)} - Y_n^{(2)}}.
% \end{align}
%\clearpage
\section{Convergence of Projected Noisy Stochastic Gradient Descent}\label{sec:convergence_nsgd}
\iffalse
Let us consider a random process $B\colon\R_+\to \Rd{\Natural^{(2)}}$, such that
% where $B_{i,j}=B_{j,i}$ for $(i,j)\in\Natural^{(2)}$, and the coordinate processes $\{B_{(i,j)}\}_{i\leq j}$ are independent standard Brownian motions.
$\round{B_{i,j}}_{(i,j)\in\Natural^{(2)}}$ is an infinite doubly indexed array of independent Brownian motions up to matrix symmetry.
For every $n\in\Natural$, define the $\Rd{\squarebrack{n}^{(2)}}$-valued random process $B_n \coloneqq \round{B_{(i,j)}}_{(i,j)\in\squarebrack{n}^{(2)}}$.% up to matrix symmetry.
\fi

% \RaghavS{Since $\Wcal$ is contained in $L^\infty\big(\interval{0,1}^{(2)}\big)$, there exist non-negative constants $M_2\leq M_\infty <\infty$ such that $\enorm{\phi(W)}\leq M_2$ and $\norm{\infty}{\phi(W)}\leq M_\infty$ for all $W\in \Wcal$.}\Tripathi{This follows from assumption 4 below.}\RaghavS{Assumption 4 only provides a Lipschitz constant. The Lipschitz constant of $\phi$ can be $1$ but its norm can be in trillions. Consider $R(x) = x^2/2 + 10^{12}x + 47, \implies \phi(x)=x+10^{12}$ for $x\in\interval{-1,1}$.}\Tripathi{yes, it can be. But for example one can easily get $\enorm{\phi(W)}\leq \kappa_2+\enorm{\phi(W_0)}$ where $W_0$ is $0$ graphon. And similarly, for $M_{\infty}$ one can get $\kappa_{\infty}+\norm{\infty}{\phi(W_0)}$. And, in general, one can not do better than that.}

The goal of this section is to show that for each $n\in\Natural$, the projected noisy SGD iterates, defined in~\eqref{eq:PNSGD}, converges weakly to the strong solution of the SDE~\eqref{eq:RSDE} as $\abs{\tauvec_n}\to 0$. This is done in two steps that we describe below.

Recall the projected noisy SGD iterates defined in Definition~\ref{def:PNSGD}, starting from $W_{n,0}\in\Mcal_n$, rewritten for convenience:%
\begin{align}
    W_{n, k+1} &= P\round{W_{n, k} - n^2\tau_{n,k}\nabla R_n\round{W_{n, k}} - \tau_{n,k}\Delta M_{n,k} + \tau_{n,k}^{1/2}G_{n, k}}, \label{eq:PSGD+Noise}\tag{PNSGD}
\end{align}
for $k\in\R_+$, where $\round{G_{n, k}}_{k\in\Integer_+}$ is any $n\times n$ real symmetric matrix valued martingale difference sequence with each element containing centered and independent entries up to matrix symmetry, as defined in Section~\ref{sec:intro},
% $\Sigma_n\colon \Mcal_n \to \Rd{\squarebrack{n}^{(2)} \times \squarebrack{n}^{(2)}}$ is the diffusion coefficient function such that $\Sigma_n(A_n)$, for any $A_n\in\Mcal_n$, acts on symmetric matrices to output symmetric matrices.
and% $\round{\Delta M_{n, k}}_{k\in\Integer_+}$ is given by
\[
    \Delta M_{n, k} \coloneqq  n^2g_n\round{W_{n,k};\xi_{k+1} }  -n^2\nabla R_n\round{W_{n, k}} , \qquad k\in\Integer_+ .
\]
Observe that $\round{\Delta M_{n, k}}_{k\in\Integer_+}$ is an $n\times n$ symmetric matrix valued martingale difference sequence with respect to the filtration $\round{\Fcal_k}_{k\in\Integer_+}$ where $\mathcal{F}_k\coloneqq \sigma\big(\set{ W_{n, 0}, \xi_{i+1}, G_{n, i}}_{i\in\set{0}\cup[k-1]}\cup\set{\xi_{k+1}} \big)$ for $k\in\Integer_+$.
%\RaghavS{check indexing. Should also include $W_{n,0}$}.
%\RaghavS{$\set{ W_{n, 0}, \xi_{i+1}, G_{n, i} \given 0\leq i\le k-1}\cup\set{\xi_{k+1}}$?}
Without the martingale difference term $\tau_{n,k}\Delta M_{n, k}$, equation~\eqref{eq:PSGD+Noise} reduces to the projected GD iterates with additive noise, $\round{V_{n,k}}_{k\in\Integer_+}$ starting at $V_{n,0}=W_{n,0}$, described in~\eqref{eq:PGD+Noise}, re-written below
\begin{align}
    V_{n,k+1} &= P\round{V_{n,k} - n^2\tau_{n,k}\nabla R_n\round{V_{n,k}} + \tau_{n,k}^{1/2}G_{n,k}}, \qquad k\in\Integer_+. \label{eq:PGD+Noise}\tag{PNGD}
\end{align}
Let $W^{(n)}_{k} \coloneqq K\round{W_{n,k}}$ and $V^{(n)}_{k} \coloneqq K\round{V_{n,k}}$ for all $k\in\Integer_+$, and let $W^{(n)}$ and $V^{(n)}$ be piecewise constant interpolations of $\big(W^{(n)}_k\big)_{k\in\Integer_+}$ and $\big(V^{(n)}_k\big)_{k\in\Integer_+}$ respectively with the step size sequence $\tauvec_n$. Using Gr\"{o}nwall's inequality and an obvious coupling between the processes~\eqref{eq:PSGD+Noise} and~\eqref{eq:PGD+Noise}, we show in Lemma~\ref{lem:ApproximationWithandWithoutRandomDrift} that the two processes are close as $\abs{\tauvec_n}\to 0$.

\begin{lemma}\label{lem:ApproximationWithandWithoutRandomDrift}
    Let $R\colon \Wcal\to \R$ be such that the Fr\'echet-like derivative $\phi=D_{\Wcal}R$ exists. Suppose Assumptions~\ref{asmp:R_ell_phi}, and~\ref{asmp:small_noise} hold.
    Let $n\in\Natural$. Let $W_n$ and $V_n$ be the piecewise constant interpolations (see Definition~\eqref{def:interpolation}) of $\round{W_{n, k}}_{k\in\Integer_+}$ and $\round{V_{n,k}}_{k\in\Integer_+}$ respectively, as defined in~\eqref{eq:PSGD+Noise} and~\eqref{eq:PGD+Noise}, with step size sequence $\tauvec_n \coloneqq \round{\tau_{n,k}}_{k\in\Integer_+}$. Then, there exists a universal constant $C>0$ such that for any $T>0$ we have
    \[
        \E{\sup_{s\in\interval{0,T}} \enorm{W^{(n)}(s)-V^{(n)}(s)}^2} \leq C\sigma^2 T\abs{\tauvec_n}\exp\squarebrack{C\kappa_2^2 T^2}.
    \]
\end{lemma}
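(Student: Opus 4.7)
The plan is to couple the two iterations by driving both with the same large-noise matrices $\round{G_{n,k}}_{k\in\Integer_+}$ and the same initial condition $W_{n,0} = V_{n,0}$, so that this term cancels in the difference $D_k := W^{(n)}_k - V^{(n)}_k \in \Wcal_n$. Using the scaling relation~\eqref{eq:scaling_gradient} to translate the matrix recursions into kernel-space updates and using that the pointwise projection $P$ is $1$-Lipschitz in $\enorm{\cdot}$, one obtains
\begin{equation*}
\enorm{D_{k+1}}^2 \leq \enorm{D_k - \tau_{n,k}\bigl(\phi\bigl(W^{(n)}_k\bigr) - \phi\bigl(V^{(n)}_k\bigr)\bigr) - \tau_{n,k}\eta_k}^2,
\end{equation*}
where $\eta_k := g\bigl(W^{(n)}_k; \xi_{k+1}\bigr) - \phi\bigl(W^{(n)}_k\bigr)$ is conditionally mean-zero with $\E{\enorm{\eta_k}^2 \given \mathcal{F}_k} \leq \sigma^2$ by Assumption~\ref{asmp:small_noise}. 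Expanding the square, invoking the $\kappa_2$-Lipschitz property of $\phi$ from Assumption~\ref{asmp:R_ell_phi}, and abbreviating $U_k := D_k - \tau_{n,k}(\phi(W^{(n)}_k) - \phi(V^{(n)}_k))$ (which is $\mathcal{F}_k$-measurable), one arrives at
\begin{equation*}
\enorm{D_{k+1}}^2 \leq (1 + c\tau_{n,k})\enorm{D_k}^2 - 2\tau_{n,k}\inner{U_k, \eta_k} + \tau_{n,k}^2 \enorm{\eta_k}^2,
\end{equation*}
with $c$ depending only on $\kappa_2$ and $\abs{\tauvec_n}$.

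The argument then proceeds in two stages. First, taking expectations pointwise in $k$ makes the martingale cross term drop out, and discrete Gr\"onwall together with $D_0 = 0$ yields the \emph{pointwise} $L^2$ bound $\E{\enorm{D_k}^2} \leq C\sigma^2 T \abs{\tauvec_n} e^{cT}$ uniformly over $k$ with $t_{n,k}\leq T$, and in turn $\E{\enorm{U_k}^2}$ of the same order.

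Second, passing from pointwise control to the supremum, unrolling the recursion on sample paths (again with $D_0 = 0$) gives
\begin{equation*}
\max_{k\leq N_T}\enorm{D_k}^2 \leq e^{cT}\Bigl(\max_{k\leq N_T}\mathcal{M}_k + \mathcal{A}_{N_T}\Bigr),
\end{equation*}
where $\mathcal{M}_k := -2\sum_{j<k}\tau_{n,j}\inner{U_j,\eta_j}$ is a martingale and $\mathcal{A}_k := \sum_{j<k}\tau_{n,j}^2\enorm{\eta_j}^2$ is nonnegative and nondecreasing with $\E{\mathcal{A}_{N_T}}\leq \sigma^2 T \abs{\tauvec_n}$, and $N_T$ is the first index with $t_{n,N_T}>T$. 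Doob's $L^2$ maximal inequality gives $\E{\max_k\abs{\mathcal{M}_k}}\leq 2\E{\mathcal{M}_{N_T}^2}^{1/2}$, and orthogonality of martingale increments together with the bootstrap from the first stage yields
\begin{equation*}
\E{\mathcal{M}_{N_T}^2} = 4\sum_{j<N_T}\tau_{n,j}^2 \E{\inner{U_j,\eta_j}^2} \leq 4\sigma^2 \abs{\tauvec_n} T \cdot \sup_{j<N_T}\E{\enorm{U_j}^2} \leq C\sigma^4 T^2\abs{\tauvec_n}^2 e^{cT}.
\end{equation*}
Taking square roots and combining with the bound on $\mathcal{A}_{N_T}$ gives the announced estimate, with $e^{(3/2)cT}$ absorbed into $\exp[C\kappa_2^2 T^2]$ by a routine case split on whether $\kappa_2 T\leq 1$.

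The main obstacle is precisely at the Doob step: the naive estimate using only the trivial a.s. bound $\enorm{U_j}\leq 2$ produces $\E{\max\abs{\mathcal{M}_k}} = O(\sigma\sqrt{T\abs{\tauvec_n}})$, yielding an undesirable $\sqrt{\abs{\tauvec_n}}$-dependence in place of the advertised linear one. The resolution is the bootstrap described above: feeding the pointwise $L^2$ bound $\E{\enorm{U_j}^2} = O(\sigma^2 T\abs{\tauvec_n})$ into the quadratic variation of $\mathcal{M}$ produces an extra factor of $\abs{\tauvec_n}$ inside the square root and restores the correct scaling.
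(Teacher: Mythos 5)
Your proof is sound in substance but takes a genuinely different route from the paper. The paper couples the two recursions through the same Gaussian noise exactly as you do, but it never expands the projected recursion by hand: it invokes Słomiński's stability estimate for discretized reflected schemes (\cite[Theorem 1]{slominski1994approximation}) to bound $\E{\sup_{s\in\interval{0,T}}\normF{W_n(s)-V_n(s)}^2}$ directly by a squared drift-difference term plus the quadratic-variation term $\E{\sum_j\tau_{n,j}^2\normF{\Delta M_{n,j}}^2}$, and then finishes with Cauchy--Schwarz, the Lipschitz bound on $n^2\nabla R_n$, and Gr\"onwall; the linear $\abs{\tauvec_n}$ factor comes for free because the maximal-inequality work is hidden inside the cited theorem. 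Your replacement — nonexpansiveness of the coordinatewise projection in $\enorm{}$, an explicit martingale-plus-increasing decomposition, a pointwise Gr\"onwall bound, and a Doob $L^2$ bootstrap feeding $\E{\enorm{U_j}^2}=O(\sigma^2T\abs{\tauvec_n})$ into the bracket of $\mathcal{M}$ — is more elementary and self-contained, and your diagnosis that naive Doob with only $\enorm{U_j}\le 2$ would lose a factor $\abs{\tauvec_n}^{1/2}$ is exactly the right point. The cosmetic discrepancy between your $e^{cT}$, $c\asymp\kappa_2$, and the stated $\exp\squarebrack{C\kappa_2^2T^2}$ is absorbed as you say via $2\kappa_2T\le 1+\kappa_2^2T^2$ (and the $\kappa_2^2\abs{\tauvec_n}$ part of $c$ is harmless since every step completed before time $T$ has $\tau_{n,j}\le T$).

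One step needs repair as written: the pathwise unrolling $\max_{k\le N_T}\enorm{D_k}^2\le e^{cT}\big(\max_{k}\mathcal{M}_k+\mathcal{A}_{N_T}\big)$ does not follow from the recursion, because unrolling produces weights $w_j=\prod_{j<l<k}(1+c\tau_{n,l})\in\interval{1,e^{cT}}$ multiplying the \emph{signed} increments $-2\tau_{n,j}\inner{U_j,\eta_j}$, and one cannot pull $e^{cT}$ out of a signed sum (one negative and one positive increment with unequal weights already breaks it). The standard fix keeps your architecture intact: divide the recursion by the deterministic discount $\prod_{l\le k}(1+c\tau_{n,l})$ to get $\enorm{D_k}^2\le e^{cT}\big(\widehat{\mathcal{M}}_k+\widehat{\mathcal{A}}_k\big)$ with $\widehat{\mathcal{M}}_k=-2\sum_{j<k}\beta_j\tau_{n,j}\inner{U_j,\eta_j}$ and $\widehat{\mathcal{A}}_k=\sum_{j<k}\beta_j\tau_{n,j}^2\enorm{\eta_j}^2$, where $\beta_j=\prod_{l\le j}(1+c\tau_{n,l})^{-1}\in(0,1]$ are deterministic; $\widehat{\mathcal{M}}$ is still a martingale with a smaller bracket, so your Doob-plus-bootstrap computation goes through verbatim and the final bound is unchanged.
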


\begin{proof}
    Let $W_{n}$ and $V_{n}$ be the piecewise constant interpolations of $\round{W_{n, j}}_{j\in\Integer_+}$ and $\round{V_{n, j}}_{j\in\Integer_+}$ respectively as defined in Definition~\ref{def:interpolation}.
    Define $\Delta\colon \R_+\to\R_+$ as
    \begin{align}
        \Delta(t) &\coloneqq \E{\sup_{s\in\interval{0,t}} \normF{W_n(s)-V_n(s)}^2} , \qquad t\in\R_+ .
    \end{align}
    Let $k\in\Integer_+$ be such that $t\in[t_{n,k},t_{n,k+1})$. Then, using~\cite[Theorem 1]{slominski1994approximation},
    \begin{align}
        \begin{split}
            \Delta(t)  &\leq C\E{ \round{\sum_{j=0}^{k-1}\tau_{n,j}\normF{n^2\nabla R_n\round{W_{n,j}} - n^2\nabla R_n\round{V_{n,j}} }}^2  }\\
            &\qquad + C\E{\sum_{j=0}^{k-1}\tau_{n,j}^2\normF{\Delta M_{n,j}}^2},
        \end{split}
        \label{eq:Dpk_ineq}
    \end{align}
    where $C>0$ is some universal constant. From Assumption~\ref{asmp:R_ell_phi}, since $\phi$ is $\kappa_2$-Lipschitz as a map from $L^2\big(\interval{0,1}^{(2)}\big)$ to $L^2\big(\interval{0,1}^{(2)}\big)$, following equation~\eqref{eq:scaling_gradient} and the fact that $\normF{A_n}^2 = n^2\enorm{K(A_n)}^2$ for all $A_n\in\Mcal_n$, we see that the map $\nabla R_n\colon\Mcal_n\to\Rd{\squarebrack{n}^2}$ satisfies
    \begin{align}
        \normF{n^2\nabla R_n(A_n) - n^2\nabla R_n(B_n)}^2 \leq \kappa_2^2\normF{A_n-B_n}^2 ,\qquad \forall \ A_n,B_n\in\Mcal_n .\label{eq:grad_Rn_lipschitz}
    \end{align}
    Using the Cauchy-Schwarz inequality, and equation~\eqref{eq:grad_Rn_lipschitz}, we first bound the second term in equation~\eqref{eq:Dpk_ineq} as
    \begin{align}
        &\; \E{ \round{\sum_{j=0}^{k-1}\tau_{n,j}\normF{n^2\nabla R_n\round{W_{n,j}} - n^2\nabla R_n\round{V_{n,j}} }}^2  }\nonumber\\
        \leq&\; \E{ \sum_{j=0}^{k-1}\round{\tau_{n,j}^{1/2}}^2\cdot\sum_{j=0}^{k-1}\tau_{n,j}\normF{n^2\nabla R_n\round{W_{n,j}} - n^2\nabla R_n\round{V_{n,j}} }^2  }\nonumber\\
        \leq&\; \kappa_2^2 t \E{\sum_{j=0}^{k-1}\tau_{n,j} \normF{W_{n,j} - V_{n,j}}^2} \leq \kappa_2^2 t \int_{0}^{t}\Delta(s)\diff s,\label{eq:first_variation_bound}
    \end{align}
    where the last inequality follows by observing that if $s\in [t_{n, j}, t_{n, j+1})$ for some $j\in\Integer_+$, then 
    \[
        \E{\normF{W_n(s)-V_n(s)}^2}=\E{\normF{W_{n, j}-V_{n, j}}^2} \leq \Delta(s).
    \]
    Using Assumption~\ref{asmp:small_noise}, first note that
    \begin{align}
        \normF{\Delta M_{n,j}}^2 &= \normF{n^2g_n\round{W_{n,k};\xi_{k+1} }  -n^2\nabla R_n\round{W_{n, k}}}^2\nonumber\\
        &= n^2 \enorm{K\round{n^2g_n\round{W_{n,k};\xi_{k+1} }  -n^2\nabla R_n\round{W_{n, k}}}}^2 \leq n^2 \sigma^2.
    \end{align}
    We use the above to bound the first term in equation~\eqref{eq:Dpk_ineq} as
    \begin{align}
        \E{\sum_{j=0}^{k-1}\tau_{n,j}^2\normF{\Delta M_{n,j}}^2} &\leq n^2\sigma^2 t\abs{\tauvec_n},\label{eq:quadratic_variation_bound}
    \end{align}
    where $\abs{\tauvec_n}$ is defined in Section~\ref{sec:intro} as $\sup_{j\in\Integer_+} \tau_{n,j}$.
    
    Plugging back~\eqref{eq:first_variation_bound} and~\eqref{eq:quadratic_variation_bound} in equation~\eqref{eq:Dpk_ineq} we get
    \begin{align}
        \Delta(t) &\leq Cn^2\sigma^2t\abs{\tauvec_n} + C\kappa_2^2 t \int_{0}^{t}\Delta(s)\diff s,
    \end{align}
    \iffalse
    \begin{align}
        &\; \E{ \round{\sum_{j=0}^{k-1}\tau_{n,j}\normF{n^2\nabla R_n\round{W_{n,j}} - n^2\nabla R_n\round{V_{n,j}} }}^2  }\nonumber\\
        \leq&\; \E{ \sum_{j=0}^{k-1}\round{\sqrt{\tau_{n,j}}}^2\cdot\sum_{j=0}^{k-1}\tau_{n,j}\normF{n^2\nabla R_n\round{W_{n,j}} - n^2\nabla R_n\round{V_{n,j}} }^2  }\nonumber\\
        \leq&\; n^2\kappa_2^2 t \E{\sum_{j=0}^{k-1}\tau_{n,j} \normF{W_{n,j} - V_{n,j}}^2}.\label{eq:first_variation_bound}
    \end{align}
    Using equations~\eqref{eq:first_variation_bound} and~\eqref{eq:quadratic_variation_bound} in equation~\eqref{eq:Dpk_ineq}, yields us
    \begin{align}
        \Delta(t)  &\leq Cn^2\sigma^2 t\abs{\tauvec_n} + Cn^2\kappa_2^2 t \E{\sum_{j=0}^{k-1}\tau_{n,j} \normF{W_{n,j} - V_{n,j}}^2}\nonumber\\
        &\leq Cn^2\sigma^2 t\abs{\tauvec_n} + Cn^2\kappa_2^2 t \int_0^t\Delta(s)\diff s. \label{eq:Dpk_ineq_aux}
    \end{align}
    \fi
    % Using Assumption~\ref{asmp:small_noise} and using the definition of the norm of step-sizes, we further get
    % \begin{align}
    %     \Delta(t)  &\leq Cn^6\sigma^2 \sum_{j=0}^{k-1}\tau_{n,j} + Ck\abs{\tauvec_n}n^2\kappa_2^2\E{ \sum_{j=0}^{k-1}\tau_{n,j}\normF{W_{n,j} - V_{n,j} }^2  }.
    % \end{align}
    % Using the definition of $\Delta\colon\R_+\to\R_+$ and the fact that $t\in[t_{n,k},t_{n,k+1})$ for some $k\in\Integer_+$, we have
    % \begin{align}
    %     \Delta(t)  &\leq Cn^6\sigma^2 \abs{\tauvec_n}t + Cn^2\kappa_2^2 \abs{\tauvec_n} \int_0^t\Delta(s)\diff s. \label{eq:Dpk_ineq_aux}
    % \end{align}
    and applying Gr\"{o}nwall's inequality~\cite{gronwall1919note}, we obtain
    $\Delta(t)\leq Cn^2\sigma^2 t\abs{\tauvec_n}\exp\squarebrack{C\kappa_2^2 t^2}$.
%    The desired conclusion follows by observing that $n^2\enorm{K(A_n)}^2=\normF{A_n}^2$ for any $A_n\in \Mcal_n$.
\end{proof}
% \begin{remark}
%     The proof of Lemma~\ref{lem:ApproximationWithandWithoutRandomDrift} can be modified and improved to account for the case when $\tauvec_n = \round{\tau_{n,j}}_{j\in\Integer_+}$ is chosen to follow an asymptotic behaviour. Typical examples include when an asymptotic upper bound on $\tau_{n,j}$ w.r.t. $j$ is proportional to $j^{-1}$ or $j^{-1/2}$.
% \end{remark}

Our next step is to show that sequence of iterates defined in~\eqref{eq:PGD+Noise} is close to the solution of the SDE~\eqref{eq:RSDE} which we reproduce below
\begin{align}
    \begin{split}
        \diff X_{n}(t) &= -n^2\nabla R_n(X_{n}(t)) + \Sigma_n(X_n(t)) \circ \diff B_{n}(t)\\
        &\qquad\qquad\qquad\qquad\qquad\qquad\qquad - \diff L_{n}^+(t) + \diff L_{n}^-(t) ,\quad t\in\R_+,
    \end{split}
    \label{eq:SDE}\tag{RSDE}
\end{align}
where $B_n$ is an $n\times n$ symmetric matrix valued process whose entries are independent Brownian motions up to matrix symmetry, and $X_{n}(0) = V_{n,0} = W_{n,0} \in \Mcal_n$. The tuple $\round{X_{n},L_{n}^+,L_{n}^-}$ solves the Skorokhod problem with respect to the set $\Mcal_n$ (see Section~\ref{sec:RBM}). 

%Following Assumption~\ref{asmp:R_ell_phi}, since $\Wcal$ is contained in $L^\infty\big(\interval{0,1}^{(2)}\big)$, there exist non-negative constants $M_2\leq M_\infty <\infty$ such that $\enorm{\phi(W)}\leq M_2$ and $\norm{\infty}{\phi(W)}\leq M_\infty$\RaghavS{put this assumption} for all $W\in \Wcal$. 
In Lemma~\ref{lem:FixedDimensionContinumLimit} we compare~\eqref{eq:PGD+Noise} with a discretization of the SDE~\eqref{eq:RSDE}. This is obtained by coupling the discrete noise in~\eqref{eq:PGD+Noise} with the Brownian motion driving the SDE~\eqref{eq:SDE}. Combining these we conclude the convergence of ~\eqref{eq:PSGD+Noise} to the SDE~\eqref{eq:SDE} as $\abs{\tauvec_n}\to 0$.

\begin{lemma}
    \label{lem:FixedDimensionContinumLimit}
    % \RaghavS{take care of the diagonal}
    Let $n\in \Natural$. Let $B_n$ be an $n\times n$ symmetric matrix valued process whose coordinates are i.i.d. Brownian motion (up to matrix symmetry) defined on some probability space. Let $X_{n}$ be the strong solution of SDE~\eqref{eq:SDE} with initial condition $X_{n}(0)=V_{n,0}$ (see~\eqref{eq:PGD+Noise}).
    Then, there exists a c\`adl\`ag process $\widetilde{V}_{n}$ on $\Mcal_n$, defined on the same probability space as $B_n$, such that it has the same law as $V_n$, the piecewise constant interpolation (see Definition~\ref{def:interpolation}) of $\big(V_{n,k}\big)_{k\in\Integer_+}$ obtained from~\eqref{eq:PGD+Noise}. Moreover, for any $T\in\R_+$,
    \[
        \lim_{\abs{\tauvec_n} \to 0} \E{\sup_{s\in\interval{0,T}} \enorm{K(X_{n}(s)) - K\round{\widetilde{V}_{n}(s)}}^2 } = 0.
    \]
\end{lemma}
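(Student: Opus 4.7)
The plan is to construct $\widetilde{V}_n$ on the given probability space by coupling the discrete martingale noise driving~\eqref{eq:PGD+Noise} with Brownian increments of $B_n$, and then to compare both $X_n$ and $\widetilde{V}_n$ as outputs of the Skorokhod map applied to explicit driving processes.

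\textbf{Step 1 (Coupling).} For every $k\in\Integer_+$ set
\[
    Z_{n,k} \coloneqq \tau_{n,k}^{-1/2}\round{B_n(t_{n,k+1}) - B_n(t_{n,k})},
\]
which is a symmetric matrix of i.i.d.\ standard Gaussians (up to symmetry), and define recursively $\widetilde{V}_{n,0} = V_{n,0}$ and
\[
    \widetilde{V}_{n,k+1} = P\!\round{\widetilde{V}_{n,k} - n^2\tau_{n,k}\nabla R_n(\widetilde{V}_{n,k}) + \Sigma_n(\widetilde{V}_{n,k})\circ\round{B_n(t_{n,k+1})-B_n(t_{n,k})}}.
\]
Because $(Z_{n,k})_{k\in\Integer_+}$ is i.i.d.\ Gaussian, the piecewise constant interpolation $\widetilde{V}_n$ of $(\widetilde{V}_{n,k})_{k\in\Integer_+}$ has the same law as the piecewise constant interpolation $V_n$ of~\eqref{eq:PGD+Noise}.

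\textbf{Step 2 (Skorokhod representations).} Introduce the step function $\underline{t}(s) \coloneqq t_{n,k}$ for $s\in[t_{n,k},t_{n,k+1})$. Define two driving processes on $\R_+$ taking values in symmetric $n\times n$ matrices:
\begin{align*}
    Y^X_n(t) &\coloneqq V_{n,0} - \int_0^t n^2\nabla R_n(X_n(s))\diff s + \int_0^t \Sigma_n(X_n(s))\circ \diff B_n(s),\\
    Y^V_n(t) &\coloneqq V_{n,0} - \int_0^t n^2\nabla R_n(\widetilde{V}_n(\underline{t}(s)))\diff s + \int_0^t \Sigma_n(\widetilde{V}_n(\underline{t}(s)))\circ \diff B_n(s).
\end{align*}
By definition of the strong solution of~\eqref{eq:SDE}, $X_n = \Lambda_{\Mcal_n}(Y^X_n)$. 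By the construction of Step~1, and by the characterization of the projected Euler scheme for Skorokhod problems on a convex polyhedron (see~\cite{slominski1994approximation}), one has $\widetilde{V}_n = \Lambda_{\Mcal_n}(Y^V_n) + \rho_n$, where $\rho_n$ is a càdlàg error with $\E{\sup_{s\in[0,T]}\normF{\rho_n(s)}^2}\to 0$ as $\abs{\tauvec_n}\to 0$; this error accounts for the mismatch between coordinatewise projection $P$ and the true Skorokhod regularization on $\Mcal_n$ and vanishes because $\Mcal_n$ is a box.

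\textbf{Step 3 (Gr\"{o}nwall estimate).} Set $\Delta(T) \coloneqq \E{\sup_{s\in[0,T]}\normF{X_n(s) - \widetilde{V}_n(s)}^2}$. Using the $4$-Lipschitz property of $\Lambda_{\Mcal_n}$ (equation~\eqref{eq:skorokhod_lipschitz} applied coordinatewise), together with Cauchy-Schwarz for the drift part and the Burkholder-Davis-Gundy inequality for the stochastic integral part, one obtains
\[
    \Delta(T) \leq C_1\int_0^T \Delta(s)\diff s + C_2\, \eta(\abs{\tauvec_n},T),
\]
where $C_1$ depends only on $T$, $\kappa_2$, and $M_\infty$ (using~\eqref{eq:grad_Rn_lipschitz} and Assumption~\ref{asmp:large_noise}), while $\eta(\abs{\tauvec_n},T)$ bundles the discretization error from replacing $s$ by $\underline{t}(s)$ in the drift and diffusion, together with the Skorokhod-projection error from Step~2. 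The key estimate for the latter is that, for bounded Lipschitz $\nabla R_n$ and $\Sigma_n$ applied to the càdlàg process $\widetilde{V}_n$,
\[
    \E{\sup_{s\in[0,T]}\normF{\int_0^s \round{\Sigma_n(\widetilde{V}_n(u)) - \Sigma_n(\widetilde{V}_n(\underline{t}(u)))}\circ \diff B_n(u)}^2}
    \leq C\,T\,\abs{\tauvec_n},
\]
with an analogous bound for the drift term. Thus $\eta(\abs{\tauvec_n},T)\to 0$ as $\abs{\tauvec_n}\to 0$. Gr\"{o}nwall's inequality yields $\Delta(T)\leq C_2\eta(\abs{\tauvec_n},T)e^{C_1 T}$, and translating $\normF{}$ to $\enorm{K(\cdot)}$ via $\normF{A_n}^2 = n^2\enorm{K(A_n)}^2$ delivers the desired conclusion.

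\textbf{Expected difficulty.} The only delicate point is the treatment of Step~2: one must verify that the coordinatewise projection $P$ appearing in~\eqref{eq:PGD+Noise} produces, in the limit $\abs{\tauvec_n}\to 0$, the same reflection as the Skorokhod problem on the box $\Mcal_n$. This follows from the Lipschitz property of $\Lambda_{\Mcal_n}$ and the fact that each coordinate is an independent one-dimensional Skorokhod problem on $\interval{-1,1}$, so that the argument reduces to the well-known scalar Euler-scheme convergence of~\cite{slominski1994approximation}. The remaining estimates are standard once this reduction is in place.
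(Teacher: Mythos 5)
Your overall architecture matches the paper's proof: couple the Gaussian noise with Brownian increments via $\widetilde{Z}_{n,k}=\tau_{n,k}^{-1/2}(B_n(t_{n,k+1})-B_n(t_{n,k}))$, view both $X_n$ and the coupled discrete scheme as Skorokhod images of explicit drivers, and close with the $4$-Lipschitz property of the Skorokhod map plus Gr\"onwall. However, there is a gap in your accounting of the discretization error. Your ``key estimate'' in Step~3 concerns $\int_0^s\round{\Sigma_n(\widetilde{V}_n(u))-\Sigma_n(\widetilde{V}_n(\underline{t}(u)))}\circ\diff B_n(u)$, but $\widetilde{V}_n$ is the \emph{piecewise constant} interpolation, so $\widetilde{V}_n(u)=\widetilde{V}_n(\underline{t}(u))$ identically and this quantity is zero; the inequality you state is vacuous and does not capture any error. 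The genuine error is elsewhere: it is the difference between the continuous-time driver $Y^V_n$ and the discrete-time driver of the projected scheme, i.e.\ the within-step tail terms
\[
    -\int_{t_{n,k_t}}^{t} n^2\nabla R_n\round{\widetilde{V}_{n,k_t}}\diff s \;+\; \Sigma_n\round{\widetilde{V}_{n,k_t}}\circ\round{B_n(t)-B_n(t_{n,k_t})},
\]
and this is exactly what your $\rho_n$ from Step~2 consists of. Your stated reason that $\rho_n$ vanishes (``mismatch between coordinatewise projection and the true Skorokhod regularization, because $\Mcal_n$ is a box'') is misdirected: on a box the coordinatewise projection \emph{is} exactly the Skorokhod solution for the piecewise-constant driver, so that mismatch is zero; the nonzero part of $\rho_n$ comes from the Brownian oscillation inside each time step, which must be estimated.

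Concretely, what is needed (and what the paper does, via an intermediate driver and~\cite[Lemma A.4]{slominski2001euler}) is a bound of the form
\[
    \E{\sup_{t\in\interval{0,T}}\normF{\Sigma_n\round{\widetilde{V}_{n,k_t}}\circ\round{B_n(t)-B_n(t_{n,k_t})}}^2} \le M_\infty^2\, n^2\, C_{1,T}\,\abs{\tauvec_n}\log\inv{\abs{\tauvec_n}},
\]
i.e.\ a modulus-of-continuity estimate for the maximal within-cell Brownian increment. Note also that your proposed rate $C\,T\,\abs{\tauvec_n}$ cannot hold for the sup over $[0,T]$ of this quantity: there are of order $T/\abs{\tauvec_n}$ cells and the maximum squared oscillation over all of them carries an unavoidable $\log(1/\abs{\tauvec_n})$ factor. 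This does not threaten the conclusion (the bound still tends to zero as $\abs{\tauvec_n}\to 0$, and the Gr\"onwall step then goes through exactly as you outline), but as written the one nontrivial quantitative step of the proof is missing: you must either prove the above estimate for $\rho_n$ directly, or, as in the paper, insert the intermediate process $\widehat{Y}_n(t)=X_n(0)-\int_0^t n^2\nabla R_n(\widetilde{V}_n(s))\diff s+\int_0^t\Sigma_n(\widetilde{V}_n(s))\circ\diff B_n(s)$ and bound $\overline{Y}_n-\widehat{Y}_n$ by the drift tail (of order $n^2\abs{\tauvec_n}^2$) plus the within-step Brownian term above.
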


\begin{proof}
    Let $B_n$ be as given in the assumption and let $X_n$ be the strong solution of the SDE~\eqref{eq:SDE}. %\RaghavS{need to define $\widetilde{G}_{n,k}$ using Proposition~\ref{prop:nonasymp_donsker}.}
    % Set \RaghavS{state dependence and coupling}
    % \begin{equation}\label{eqn:couplingbtwDiscreteandContNoise}
    %     G_{n, k}\coloneqq \left(B_n(t_{n,k+1})-B_n(t_k)\right)/\sqrt{\tau_{n, k}},
    % \end{equation}
    % and let $(\widetilde{V}_{n, k})_{k\in\Integer_+}$ be as defined equation~\eqref{eqn:AuxGD+Noise}.
    Since the discrete noise in~\eqref{eq:PGD+Noise} is Gaussian (see Assumption~\ref{asmp:large_noise}), there is an obvious way to couple it with the Brownian motion driving the SDE in~\eqref{eq:SDE}.
    % Let $B_{n}(t)$ be an $n\times n$ symmetric matrix valued process whose entries are independent Brownian motions up to matrix symmetry defined on some probability space.
    Given $B_n$ and the step size sequence $\tauvec_n = \round{\tau_{n, k}>0}_{k\in\Integer_+}$, define the discrete time $n\times n$ symmetric matrix valued martingale difference sequence $\big(\widetilde{Z}_{n,k}\big)_{k\in\Integer_+}$ as
    \begin{equation}\label{eqn:GaussianCoupling}
        \widetilde{Z}_{n, k} \coloneqq \tau_{n,k}^{-1/2}\round{B_{n}(t_{n,k+1})-B_{n}(t_{n,k})} ,\qquad k\in\Integer_+ .
    \end{equation}
    % if $\beta>0$, and identically $0$ if $\beta = 0$.
    Note that the entries in $\widetilde{Z}_{n, k}$ are distributed as $N(0, 1)$ up to matrix symmetry for every $k\in\Integer_+$. Starting from $\widetilde{V}_{n, 0}=V_{n,0}$, we now define an auxiliary process $\big(\widetilde{V}_{n, k}\big)_{k\in\Integer_+}$, on the same probability space as $B_n$, iteratively as
    \begin{align}
    \label{eqn:AuxGD+Noise}
        \widetilde{V}_{n,k+1} &= P\round{\widetilde{V}_{n,k} - n^2\tau_{n,k}\nabla R_n\round{\widetilde{V}_{n,k}} + \tau_{n,k}^{1/2}\Sigma_n\round{\widetilde{V}_{n,k}}\circ\widetilde{Z}_{n,k}},\qquad k\in\Integer_+,%\label{eq:PGD+Noise}\tag{PNGD}
    \end{align}
    Following Assumption~\ref{asmp:large_noise}, $\widetilde{V}_{n, k}$ has the same law as $V_{n, k}$ for each $k\in\Integer_+$.
    % Let $V^{(n)}_k \coloneqq K(V_{n,k})$, and $\widetilde{V}^{(n)}_k \coloneqq K\big(\widetilde{V}_{n,k}\big)$ for every $k\in\Integer_+$; and $V^{(n)}\colon\R_+ \to \Wcal_n$ and $\widetilde{V}^{(n)}\colon\R_+ \to \Wcal_n$ be the piecewise constant interpolation (see Definition~\ref{def:interpolation}) of $\big(V^{(n)}_k\big)_{k\in\Integer_+}$ and $\big(\widetilde{V}^{(n)}_k\big)_{k\in\Integer_+}$ respectively.
    Let $\widetilde{V}_n\colon\R_+\to \Mcal_n$ be piecewise constant interpolation of $\round{\widetilde{V}_{n, k}}_{k\in\Integer_+}$.
    The particular choice of $\big(\widetilde{Z}_{n, k}\big)_{k\in\Integer_+}$ in equation~\eqref{eqn:GaussianCoupling} allows us to couple $\widetilde{V}_n$ with the strong solution of the SDE~\eqref{eq:RSDE}.
    Let $\widetilde{G}_{n,j} \coloneqq \Sigma_n\round{\widetilde{V}_{n,j}}\circ\widetilde{Z}_{n,j}$ for all $j\in\Integer_+$. The curve $\widetilde{V}_n$ can be written as
    \begin{align}
         \widetilde{V}_n(t) = \widetilde{V}_{n,0} - \sum_{j=0}^{k-1} n^2\tau_{n,j}\nabla R_n(\widetilde{V}_{n,j}) + \sum_{j=0}^{k-1} \tau_{n,j}^{1/2}\widetilde{G}_{n,j} + \sum_{j=0}^{k-1}\tau_{n,j}\round{L_{n,j}^- - L_{n,j}^+},
    \end{align}
    for $t\in[t_{n,k},t_{n,k+1})$. Here $\round{L_{n, j}^{\pm}}_{j\in\Integer_+}$ is chosen so that the piecewise constant interpolation (see Definition~\ref{def:interpolation}) of $\round{V_{n,k},L_{n,k}^-,L_{n,k}^+}_{k\in\Integer_+}$ solves the Skorokhod problem with respect to $\Mcal_n$ (see Section~\ref{sec:RBM}). 
    
    Also consider three auxiliary processes $Y_n$, $\overline{Y}_n$, and $\widehat{Y}_n$ taking values over $n\times n$ real symmetric matrices, defined as
    \begin{align}
        Y_n(t) &\coloneqq X_n(0) - \int_0^t n^2 \nabla R_n(X_n(s))\diff s + \int_0^t\Sigma_n(X_n(s))\circ \diff B_n(s),\\
        \widehat{Y}_n(t) &\coloneqq X_n(0) - \int_0^t n^2 \nabla R_n\round{\widetilde{V}_n(s)}\diff s + \int_0^t\Sigma_n\round{\widetilde{V}_n(s)}\circ \diff B_n(s),\\
        \overline{Y}_n(t) &\coloneqq X_n(0) - \sum_{j=0}^{k-1}n^2\tau_{n,j}\nabla R_n(\widetilde{V}_{n,j}) + \sum_{j=0}^{k-1}\tau_{n,j}^{1/2} \widetilde{G}_{n,j},
    \end{align}
    for every $k\in\Integer_+$ and all $t\in[t_{n,k},t_{n,k+1})$. Observe that the curves $X_n$ and $\widetilde{V}_n$ can be obtained by applying the Skorokhod map to the curves $Y_n$ and $\overline{Y}_n$ pointwise respectively. Let $\widehat{V}_n\colon\R_+\to\Mcal_n$ be obtained from $\widehat{Y}_n$ by applying the Skorokhod map. First observe that using the Lipschitzness of the Skorokhod map, $\phi$ and $\Sigma_n$ (see Assumption~\ref{asmp:R_ell_phi}, Assumption~\ref{asmp:large_noise}, Section~\ref{sec:RBM} and equation~\eqref{eq:grad_Rn_lipschitz}), we obtain
    \begin{align}
        & \E{\sup_{t\in\interval{0,T}} \normF{\widehat{V}_n(t)  -  X_n(t)}^2} \leq 16\E{\sup_{t\in\interval{0,T}} \normF{\widehat{Y}_{n}(t) - Y_{n}(t) }^2 } \nonumber\\
        &\leq\; 16 \E{ \sup_{t\in\interval{0,T}} \normF{\int_0^t n^2 \nabla R_n(X_n(s)) - n^2\nabla R_n\round{\widetilde{V}_n(s)}\diff s}^2 }\nonumber\\
        &\quad\quad+16\E{\sup_{t\in\interval{0, T}} \normF{\int_{0}^{t} \round{\Sigma_n(X_n(s))-\Sigma_n\round{\widetilde{V}_n(s)}}\circ \diff B_n(s)}^2}\nonumber\\
        &\leq  16 \kappa_2^2 \E{ \int_0^T\normF{ X_n(s) - \widetilde{V}_n(s)}^2\diff s }\nonumber\\
        &\qquad +64\E{\int_{0}^{T}\normF{\Sigma_n(X_n(s))-\Sigma_n\round{\widetilde{V}_n(s)}}^2\diff s}\nonumber\\
        &\leq 80 \kappa_2^2 \int_0^T \E{ \sup_{s\in\interval{0,t}}\normF{ X_n(s) - \widetilde{V}_n(s)}^2 }\diff s,\label{eq:triangle_2}
    \end{align}
    where the second last inequality follows from Doob's maximal inequality~\cite[page 14, Theorem 3.8.iv]{KS91} and the fact that for all $A_n\in\Mcal_n$, $\normF{A_n}^2 = n^2\enorm{K(A_n)}^2$. For any $t\in[0,T]$, define $k_t \coloneqq \argmin_{j\in\Integer_+}\setinline{t\geq t_{n,j}}$. Using the Lipschitzness of Skorokhod map (see Section~\ref{sec:RBM}) we obtain %\RaghavS{Lipschitz constant of state dependence}
    \begin{align}
        \mathop{\mathbb{E}}\Biggl[\sup_{s\in\interval{0,T}} &\normF{\widetilde{V}_n(t) - \widehat{V}_n(t)}^2\Biggr] \leq 16\E{\sup_{t\in\interval{0,T}} \normF{\overline{Y}_{n}(t) - \widehat{Y}_{n}(t) }^2 }\nonumber\\
        % \E{\sup_{s\in\interval{0,T}} \normF{\widetilde{V}_n(t) - \widehat{V}_n(t)}^2} &\leq 16\E{\sup_{t\in\interval{0,T}} \normF{\overline{Y}_{n}(t) - \widehat{Y}_{n}(t) }^2 }\nonumber\\
        % &\nonumber\\
        &\leq 32 \E{ \sup_{t\in\interval{0,T}} \normF{\int_0^t n^2\nabla R_n\round{\widetilde{V}_n(s)}\diff s - \sum_{j=0}^{k_t-1}n^2\tau_{n,j}\nabla R_n\round{\widetilde{V}_{n,j}}  }^2 }\nonumber\\
        &+ 32 \E{ \sup_{t\in\interval{0,T}} \normF{\sum_{j=0}^{k_t-1}\tau_{n,j}^{1/2}\Sigma_n\round{\widetilde{V}_{n, j}}\circ \widetilde{Z}_{n,j} - \int_0^t\Sigma_n\round{\widetilde{V}_n(s)}\circ \diff B_n(s)}^2 }, \label{eq:triangle_1}
    \end{align}
    where the last inequality follows from Assumption~\ref{asmp:large_noise}.
    
    We now bound the first term from the above inequality~\eqref{eq:triangle_1}. To this end observe that
    \begin{align}
        &\; \E{ \sup_{t\in\interval{0,T}} \normF{\int_0^t n^2\nabla R_n\round{\widetilde{V}_n(s)}\diff s - \sum_{j=0}^{k_t-1}n^2\tau_{n,j}\nabla R_n\round{\widetilde{V}_{n,j}}  }^2 } \nonumber\\
        =&\; \E{ \sup_{t\in\interval{0,T}} \normF{n^2(t-t_{n,k_t})\nabla R_n\round{\widetilde{V}_{n,k}}}^2 }\leq \abs{\tauvec_n}^2 \E{ \sup_{t\in\interval{0,T}} \normF{n^2 \nabla R_n\round{\widetilde{V}_{n,k}}}^2 } \nonumber\\
        =&\; n^2\abs{\tauvec_n}^2 \E{ \sup_{t\in\interval{0,T}} \enorm{\phi\round{\widetilde{V}^{(n)}(t)}}^2 } \leq n^2\abs{\tauvec_n}^2 M_2^2,\label{eq:triangle_1a}
    \end{align}
    for some constant $M_2\in\R_+$ by Assumption~\ref{asmp:R_ell_phi}.
    
    We now bound the second term in the inequality~\eqref{eq:triangle_1}. Using the coupling defined in~\eqref{eqn:GaussianCoupling} and noting that $\widetilde{V}(s)=\widetilde{V}_{n, j}$ for $s\in [t_{n, j}, t_{n,j+1})$ (see Definition~\ref{def:interpolation}), we obtain that
    \begin{align}
        \begin{split}
            &\E{\sup_{t\in\interval{0,T}} \normF{\sum_{j=0}^{k_t-1}\tau_{n,j}^{1/2}\Sigma_n\round{\widetilde{V}_{n, j}}\circ \widetilde{Z}_{n,j} - \int_0^t\Sigma_n\round{\widetilde{V}_n(s)}\circ \diff B_n(s)}^2}\\
            &=\E{\sup_{t\in [0, T]}\normF{\Sigma_n\round{\widetilde{V}_{n, k_t}}\circ \round{B_n(t)-B_n(t_{n, k_t})}}^2} \leq M_\infty^2n^2C_{1,T}\abs{\tauvec_n}\log\inv{\abs{\tauvec_n}},
        \end{split}\label{eq:donsker}
    \end{align}
    where the last inequality follows from Assumption~\ref{asmp:large_noise} and~\cite[Lemma A.4]{slominski2001euler} for $C_{1,T}\in\R_+$.

    Now define $\Delta\colon\R_+\to\R_+$ as
    \[
        \Delta(t) \coloneqq \E{ \sup_{s\in\interval{0,t}}\normF{ X_n(s) - \widetilde{V}_n(s)}^2 }, \qquad t\in\R_+.
    \]
    Using the triangle inequality by combining equations~\eqref{eq:triangle_2},~\eqref{eq:triangle_1},~\eqref{eq:triangle_1a} and~\eqref{eq:donsker},
    % Proposition~\ref{prop:nonasymp_donsker} (see below), %~\eqref{eq:triangle_1b},~\eqref{eq:triangle_1c}
    we get
    \begin{align}
        \Delta(T) &\leq 32n^2\abs{\tauvec_n}^2 M_2^2 + 32n^2M_\infty^2C_{1,T}\abs{\tauvec_n}\log\inv{\abs{\tauvec_n}} + 80 \kappa_2^2\int_0^T \Delta(t)\diff t.
    \end{align}
    Applying Gr\"{o}nwall's inequality~\cite{gronwall1919note}, we get
    \begin{align}
        \Delta(T) &\leq 32n^2\round{\abs{\tauvec_n}^2 M_2^2 + M_\infty^2C_{1,T}\abs{\tauvec_n}\log\inv{\abs{\tauvec_n}}}\exp\squarebrack{80 \kappa_2^2 T}.
    \end{align}
    Taking limit as $\abs{\tauvec_n}\to 0$ on the above bound, completes the proof.
\end{proof}

We combine Lemma~\ref{lem:ApproximationWithandWithoutRandomDrift} and~\ref{lem:FixedDimensionContinumLimit} to conclude the proof of Theorem~\ref{thm:SGD_to_SDEn}. Moreover, we also obtain the following non-asymptotic error rate
 \[
        \E{\sup_{s\in\interval{0,T}} \enorm{W^{(n)}(s)-K(X_{n})(s)}^2} \leq Cn^2(M+\sigma^2T)\abs{\tauvec_n}\log\inv{\abs{\tauvec_n}}\exp\squarebrack{C \kappa_2^2 T}
    \]
for some constants $C, M<\infty$.

\subsection{Convergence of Projected Stochastic Gradient Descent}\label{sec:without_noise}
In the absence of ``large noise'' (i.e., when $\Sigma_n\equiv 0$), the SDE~\eqref{eq:SDE} reduces to the SDE
\begin{align}
    \diff X_n(t) = -n^2\nabla R_n(X_n(t))\diff t + \diff L^{-}_n(t) - \diff L^{+}_n(t),\qquad X_n(0) = W_{n,0},\label{eq:RSDE_sigma0}
\end{align}
As we describe in Section~\ref{sec:beta_0}, it is show in~\cite[Theorem 4.4, Theorem 4.14]{oh2021gradient} that if the solution of
\begin{equation}\label{eqn:GF_indicator}
    \diff X_n(t) = -n^2\nabla R_n(X_n(t))\circ\indicator{G_n(X_n(t))}{}\diff t,
\end{equation}
exists, where $G_n(A)$ is the subset of $\squarebrack{n}^{2}$ defined as
\begin{align}\label{eq:G_n}
    \begin{split}
        G_n(A) &\coloneqq \set{(i,j)\in\squarebrack{n}^{2} \given \abs{A(i,j)}<1}\\
        &\qquad\cup \set{(i,j)\in\squarebrack{n}^{2} \given A(i,j)=1, \pdiff_{i,j} R_n(A)>0}\\
        &\qquad\qquad \cup \set{(i,j)\in\squarebrack{n}^{2} \given A(i,j)=-1, \pdiff_{i,j} R_n(A)<0},
    \end{split}
\end{align}
for all $A\in\Mcal_n$, then the solution $X_n$ is a gradient flow on $\Mcal_n$ in a suitable sense. In this section, we will argue that the solutions $X_n$ of equation~\eqref{eq:RSDE_sigma0} and~\eqref{eqn:GF_indicator} are equal. To this end, we define processes $L_n^{\pm}$ as
\begin{align}\label{eq:L^pm_def}
    \begin{split}
        L^+_n(t) &\coloneqq -\int_0^t n^2\nabla R_n(X_n(s))\circ\indicator{\set{X_n(s)=+1,\nabla R_n(X_n(s))<0}}{}\diff s,\\
        L^-_n(t) &\coloneqq +\int_0^t n^2\nabla R_n(X_n(s))\circ\indicator{\set{X_n(s)=-1,\nabla R_n(X_n(s))>0}}{}\diff s,
    \end{split}
\end{align}
for $t\in\R_+$, and equation~\eqref{eqn:GF_indicator} can be rewritten as
\begin{align}
    \diff X_n(t) &= -n^2\nabla R_n(X_n(t))\circ\indicator{G_n(X_n(t))}{} + \diff L^-_n(t) - \diff L^+_n(t),
\end{align}
and the processes $L_n^+$ and $L_n^-$ satisfy the following conditions:
\begin{enumerate}
    \item The processes $X_{n}$, $L_{n}^+$ and $L_n^-$ are adapted processes.
    \item The processes $L_{n}^-$ and $L_n^+$ are non-decreasing processes.
    \item For every $(i,j)\in\squarebrack{n}^{2}$,
    \[
        \begin{split}
            \int_0^\infty \indicator{}{X_{n,(i,j)}(t) > -1} \diff L_{n,(i,j)}^-(t) &= 0, \quad\text{and}\\
            \int_0^\infty \indicator{}{X_{n,(i,j)}(t) < +1} \diff L_{n,(i,j)}^+(t) &= 0.
        \end{split}
    \]
\end{enumerate}
Following Section~\ref{sec:RBM}, these conditions ensure that the processes $L_n^+$ and $L_n^-$ are unique and $(X_n,L_n^+,L_n^-)$ solves the Skorokhod problem with respect to the set $\Mcal_n$. This proves Theorem~\ref{thm:zeroTempLimit}.%\clearpage
\section{Convergence of the finite dimensional SDEs}\label{sec:convergence_sde}
\subsection{The limit at infinity: infinite exchangeable array of diffusions}\label{subsec:exch_array} 

Let $\mathcal{E}$ be a standard Borel space. The sets $\squarebrack{n}^{(2)}$ and $\Natural^{(2)}$ will refer to the set of natural number pairs $(i,j)$ in $\Natural^2$ and $\squarebrack{n}^2$ respectively, such that $i<j$. Recall that an $\mathcal{E}$-valued exchangeable (symmetric) array refers to a doubly indexed collection of random elements $\left(\zeta_{i,j}\coloneqq \zeta_{\{i,j\}} \in \mathcal{E} \right)_{(i,j)\in\Natural^{(2)}} \eqqcolon\zeta$ that remain invariant in law under finite permutations of natural numbers $\N$. Two special cases of $\mathcal{E}$ that are important to us are $\mathcal{E}=[-1,1]$ and $\mathcal{E}=C[0, \infty)$ with the usual Borel topology. The Aldous-Hoover representation theorem~\cite{AldExchange, HooverExchange, hoover1982row} says that given any exchangeable array as above, there exists a measurable function $f\colon [0,1]\times \interval{0,1}^{(2)}\times \interval{0,1} \rightarrow \mathcal{E}$ such that $\zeta_{i,j}=f\left( U, U_i, U_j, U_{i,j} \right) = f\left( U, U_j, U_i, U_{i,j} \right)$ for $(i,j)\in\Natural^{(2)}$, where $U$, $\round{U_i}_{i \in \N}$, $\round{U_{i,j}=U_{\{i,j\}}}_{(i,j)\in \N^{(2)}}$ are i.i.d. $\mathrm{Uni}[0,1]$ random variables. The function $f$ is typically not unique. Following~\cite{austin2008exchangeable}, we say that $\zeta$ is directed by $f$.

The relationship between exchangeable arrays and graphons follows from the Aldous-Hoover representation~\cite{diaconis2007graph}. Assume that $\zeta_{i,j}$s are real valued and take values in the closed interval $[-1,1]$. An infinite exchangeable array gives rise to a \textit{random} graphon reminiscent of the de Finetti representation theorem for exchangeable sequences of random variables. Although we believe that the following result is well-known, we could not find a statement to this effect in the literature. However, it inspires our later constructions. 

\begin{lemma}\label{lem:random_graphon}
    Let $\zeta\in\interval{-1,1}^{\Natural^{(2)}}$ be an infinite exchangeable array directed by $f$. Consider the family of symmetric kernels $\left( g_u,\; u \in \interval{0,1}\right)$ defined by 
    \begin{equation}\label{eq:graphonexch}
        g_u(x, y)\coloneqq \E{f(u, x, y, V)}, \qquad u\in\interval{0,1}, \quad (x,y)\in\interval{0,1}^{(2)},
    \end{equation}
    where the above expectation is with respect to a $\mathrm{Uni}[0,1]$ random variable $V$. Then, for $u \in \interval{0,1}$, given $\setinline{U=u}$,
    \begin{equation}\label{eq:excha_graphon_conv}
        \lim_{n\to\infty}\delta_{\cut}\left( K\round{\round{\zeta_{i,j}=f(u, U_i, U_j, U_{i,j})}_{(i,j)\in\squarebrack{n}^{(2)}}}, [g_u] \right) = 0,\qquad \text{a.s.}
    \end{equation}
    %\RaghavS{Should $[g_u]$ above be $[g_U]$ instead? $[g_U]$ is a r.v. but $[g_u]$ is not.}
\end{lemma}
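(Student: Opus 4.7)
The plan is to condition on $\{U=u\}$ throughout and decompose
\begin{equation*}
    \zeta_{i,j} = g_u(U_i, U_j) + \varepsilon_{i,j}, \qquad \varepsilon_{i,j} \coloneqq f(u, U_i, U_j, U_{i,j}) - g_u(U_i, U_j).
\end{equation*}
By the definition of $g_u$ in~\eqref{eq:graphonexch}, $\E{\varepsilon_{i,j}\given (U_k)_{k\in\Natural}} = 0$ for every $(i,j)\in \Natural^{(2)}$, and conditional on $(U_k)_{k\in\Natural}$ the family $(\varepsilon_{i,j})_{(i,j)\in\Natural^{(2)}}$ is mutually independent (each $\varepsilon_{i,j}$ is a deterministic function of the single independent edge-latent $U_{i,j}$) and uniformly bounded by $2$. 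Writing $W_n \coloneqq K\round{(\zeta_{i,j})_{(i,j)\in [n]^{(2)}}}$, $Y_n \coloneqq K\round{(g_u(U_i,U_j))_{(i,j)\in [n]^{(2)}}}$ and $Z_n \coloneqq K\round{(\varepsilon_{i,j})_{(i,j)\in [n]^{(2)}}}$, one has $W_n = Y_n + Z_n$, so the triangle inequality together with $\delta_\cut([A],[B]) \leq \cutnorm{A-B}$ reduces the lemma to the two almost-sure convergences $\delta_\cut([Y_n], [g_u]) \to 0$ and $\cutnorm{Z_n} \to 0$.

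The first is the classical ``first sampling lemma'' for graphons: $Y_n$ is precisely the $n$-sample step-kernel of the fixed kernel $g_u \in \Wcal$ generated by the i.i.d.\ uniform points $U_1,\dots,U_n$, so~\cite[Lemma 10.16]{lovasz2012large} (stated there for $[0,1]$-valued kernels, extended to $[-1,1]$-valued ones via the affine rescaling $W\mapsto (W+1)/2$) gives $\delta_\cut([Y_n],[g_u]) \to 0$ a.s.

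The second is a concentration argument. Since $Z_n$ is a step-kernel on $\Vcal_n\times\Vcal_n$,
\begin{equation*}
    \cutnorm{Z_n} = \frac{1}{n^2}\max_{S,T\subseteq [n]}\absinline{\sum_{i\in S,\, j\in T}\varepsilon_{i,j}}.
\end{equation*}
For any fixed $S, T$, the inner sum (with $\varepsilon_{j,i}\coloneqq\varepsilon_{i,j}$) is a linear combination of the $\binom{n}{2}$ independent bounded mean-zero variables $\{\varepsilon_{i,j}:i<j\}$ with coefficients in $\{0,1,2\}$, so Hoeffding's inequality conditional on $(U_k)_k$ yields
\begin{equation*}
    \Prob{\absinline{\sum_{i\in S,\, j\in T}\varepsilon_{i,j}} \geq t \given (U_k)_k} \leq 2\exp\round{-c\, t^2 / n^2}
\end{equation*}
for an absolute $c>0$. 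Choosing $t = C n^{3/2}(\log n)^{1/2}$ and union-bounding over the $4^n$ pairs $(S, T)$ produces the summable tail $\Prob{\cutnorm{Z_n} > C(\log n / n)^{1/2}} \leq 2\cdot 4^n\exp(-cC^2 n\log n)$, after which Borel--Cantelli gives $\cutnorm{Z_n}\to 0$ a.s. The key conceptual ingredient making the whole scheme work is the conditional-independence structure of $(\varepsilon_{i,j})$ given $(U_k)$ that is inherited from the Aldous--Hoover representation, which cleanly separates a ``sampled mean'' graphon $Y_n$ (handled by the sampling lemma) from a vanishing noise component $Z_n$; the only technical bookkeeping is tracking the matrix-symmetry factor in the Hoeffding step, which affects only the absolute constant $c$.
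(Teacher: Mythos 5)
Your proof is correct, but it follows a genuinely different route from the paper's. The paper first verifies the symmetry of $g_u$ via a conditional-expectation argument and then proves the convergence \eqref{eq:excha_graphon_conv} through homomorphism densities: it computes $h_F$ of the sampled matrix as in \eqref{eq:whatistf}, checks that its expectation is $h_F(g_u)$, invokes a law of large numbers (variance bound plus Borel--Cantelli) for each fixed simple graph $F$, and concludes with the inverse counting lemma~\cite[Lemma 10.32]{lovasz2012large}. You instead split $\zeta_{i,j}=g_u(U_i,U_j)+\varepsilon_{i,j}$, handle the ``sampled mean'' step kernel with the first sampling lemma~\cite[Lemma 10.16]{lovasz2012large}, and kill the noise part by observing that, conditionally on the vertex latents, the $\varepsilon_{i,j}$ are independent, centered and bounded, so that the cut norm of the noise step kernel (which, by bilinearity and extremality, is exactly $n^{-2}\max_{S,T\subseteq[n]}\absinline{\sum_{i\in S,j\in T}\varepsilon_{i,j}}$) is controlled by Hoeffding's inequality, a union bound over the $4^n$ rectangles, and Borel--Cantelli. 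Both arguments are sound; yours is more quantitative (it yields explicit rates of order $(\log n)^{-1/2}$ from the sampling lemma and $(\log n/n)^{1/2}$ from the concentration step) and avoids the counting/inverse-counting machinery, whereas the paper's soft argument via homomorphism densities requires no concentration bookkeeping and is re-used essentially verbatim later (Lemma~\ref{lem:deterministic_gamma}, Proposition~\ref{prop:mck_vlasov}), which is why it is the paper's method of choice. Two immaterial omissions on your side: you do not verify the a.e.\ symmetry of $g_u$ (it follows, as in the paper, from $f(u,x,y,v)=f(u,y,x,v)$ a.e., and is needed for $[g_u]$ to be a graphon and for the sampling lemma you cite), and you ignore the diagonal cells of the step kernel, which contribute only $O(1/n)$ to the cut norm.
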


%Then, for $u\in [0,1]$, the function $g_u\colon\interval{0,1}^2\to\interval{-1,1}$ defined as $g_u(x,y)\coloneqq g(u,x,y)$ for $(x,y)\in\interval{0,1}^2$, is a symmetric kernel. When $U$ is distributed as $\mathrm{Uni}[0,1]$, then $[g_U]$ is a random graphon.
    
\begin{proof}
    Fix $(i, j)\in \N^{(2)}$ and note that $f(U, U_i, U_j, U_{i,j})=f(U, U_j, U_i, U_{i,j})$ since $\zeta_{i,j}=\zeta_{j,i}$ and $U_{i,j}=U_{j,i}$. Therefore, $\E{f(U, U_i, U_j, U_{i,j})\given U, U_i, U_j} = \E{f(U, U_j, U_i, U_{i,j})\given U, U_i, U_j}$, and,
    \begin{align*}
        g_u(x, y)&=\E{f(U, U_i, U_j, U_{i,j})\given U=u, U_i=x, U_j=y}\\
        &=\E{f(U, U_j, U_i, U_{i,j})\given U=u, U_i=x, U_j=y}=g_u(y, x) ,
    \end{align*}
    for a.e. $(x,y)\in\interval{0,1}^{(2)}$. Since the maps $f$, $\mathbb{E}$ and $\squarebrack{\slot{}}$ are all measurable, their composition is also measurable. Because $U$ is a random variable, $\squarebrack{g_U}$ is also a random variable  obtained as a composition of measurable maps.
    
    To see \eqref{eq:excha_graphon_conv}, start with the Aldous-Hoover representation $\zeta_{i,j}=f(U, U_i, U_j, U_{i,j})$ for every $(i,j)\in\Natural^{(2)}$. Condition on $\{U=u\}$ throughout for $u\in\interval{0,1}$. For any finite simple graph $F$, with $k$ vertices,
    \begin{equation}\label{eq:whatistf}
    \begin{split}
        h_F\left( K\round{\round{\zeta_{i,j}}_{(i,j)\in\squarebrack{n}^{(2)}}} \right) &= \frac{1}{n^{\downarrow k}} \sum_{i_1, i_2, \ldots, i_k} \prod_{\set{j, l}\in E(F)} \zeta_{i_ji_l}\\
        &= \frac{1}{n^{\downarrow k}}\sum_{i_1, i_2, \ldots, i_k} \prod_{\set{j, l}\in E(F)} f(u, U_{i_j}, U_{i_l}, U_{i_j,i_l}) ,
    \end{split}
    \end{equation}
    where the summation runs over the $n^{\downarrow k} \coloneqq n!/(n-k)!$ many injections from $\squarebrack{k}$ to $\squarebrack{n}$, and $h_F\colon\Wcal\to\R$ is the homomorphism density function of $F$~\cite[Section 7.2]{lovasz2012large}. Notice that 
    \[
    \begin{split}
        \E{h_F\left( K\round{\round{\zeta_{i,j}}_{(i,j)\in\squarebrack{n}^{(2)}}} \right)} &= \int_{[0,1]^k} \prod_{\set{j,l} \in E(F)} \E{f(u, u_j, u_l, V)} \diff u_1 \cdots \diff u_k  = h_F(g_u) ,
    \end{split}
    \]
    where $g_u$ is defined in \eqref{eq:graphonexch}. Hence, the lemma will be true if we show that the strong law of large numbers holds. That the weak law of large numbers holds, can be seen by a variance computation. That the convergence is a.e. follows from Borel-Cantelli lemma~\cite[Theorem 4.18]{kallenberg21}. We skip the standard argument. The conclusion holds following the inverse counting lemma~\cite[Lemma 10.32]{lovasz2012large}.
\end{proof}

\begin{remark}\label{rem:random_graphon}
    As a corollary of the previous result, although the function $f$ is not unique in the Aldous-Hoover representation, the law of the random graphon $[g_U]$ is indeed unique. 
\end{remark}

%In the discussion below we will abuse notation and use $\phi$ as a function on both graphons as well as exchangeable arrays. 

%We define a \textit{projection} operator $\Gamma\colon\rr^{\Natural^{(2)}}\to\Wcal$ that takes infinite exchangeable arrays to its kernel. The operator $\Gamma$, given by an expectation, is linear and monotone in the following sense. For linearity, suppose on the same probability space we have two infinite exchangeable arrays $\xi$ and $\zeta$ directed by measurable functions $f$ and $g$ respectively, then, for any $\alpha \in \rr$, $\xi+ \alpha \zeta$ is another exchangeable array that is directed by the function $f+\alpha g$. It follows from~\eqref{eq:graphonexch} that $\Gamma(\xi+\alpha \zeta)= \Gamma(\xi) + \alpha \Gamma(\zeta)$. 
%In fact $[\Gamma(\xi)]$ and $[\Gamma(\zeta)]$ are random coupled graphons. 
%As for monotonicity, suppose, as before, one has two exchangeable arrays $\xi$ and $\zeta$ such that $\xi_{i,j} \le \zeta_{i,j}$ for every $(i,j)\in\Natural^{(2)}$. Then, $\Gamma(\xi)(x,y)\le \Gamma(\zeta)(x,y)$ for all $(x,y)\in [0,1]^2$. In particular, if $\abs{\xi_{i,j}} \le \zeta_{i,j}$, for all $(i,j)\in\Natural^{(2)}$, then $\abs{\Gamma(\xi)(x,y)} \le \Gamma(\zeta)(x,y)$ for a.e. $(x,y)\in\interval{0,1}^2$.

% For the remaining of this section $\Natural^{(2)}$ will refer to the set of all natural number pairs $(i,j)\in\Natural^2$ such that $i< j$.
Consider $\left(C[0, \infty)\right)^{\N^{(2)}}$ with the natural filtration generated by the coordinate process. Enlarge the filtration by expanding the probability space to accommodate the countably many i.i.d. $\mathrm{Uni}[0,1]$ random variables $\round{U_i}_{i \in \N}$ and including the sigma algebra generated by them in the sigma algebra at time zero. Endow this filtered probability space with a probability measure $P^\infty$ that denote the joint law of $(U_i)_{i \in \Natural}$ and that of an independent array of countably many independent Brownian motions (BMs) $\set{B_{i,j}=B_{\{i,j\}}}_{(i,j)\in\Natural^{(2)}}$. Finally we turn the natural filtration to one that is right-continuous and complete, thereby satisfying the so-called usual conditions and denote it by $\Fcal = \left(\mathcal{F}_t\right)_{t \in \rr_+}$. All our processes will be adapted to this filtration associated with this set-up. Note that all uniform random variables $\round{U_i}_{i\in\Natural}$ are measurable with respect to $\mathcal{F}_0$.

Let $\phi$ and $\Sigma$ be two functions from $\Wcal$ to $L^\infty\big(\interval{0,1}^{(2)}\big)$ that are both $\kappa_2$-Lipschitz functions on kernels with respect to the the $L^2$ norm $\enorm{}$ (Assumption~\ref{asmp:R_ell_phi} and~\ref{asmp:large_noise}). Our goal is to construct, on the above probability space with filtration $\round{\Fcal_t}_{t\in\R_+}$, an exchangeable array of reflected diffusions satisfying
\begin{align}\label{eq:infinite_SDE_1}
    \diff X_{i,j}(t) &= -\phi\round{\Gamma(t)}(U_i, U_j)\diff t + \Sigma\left( \Gamma(t)\right)(U_i, U_j)\diff B_{i,j}(t) + \diff L^-_{i,j}(t) - \diff L^+_{i,j}(t) ,
\end{align}
with the initial condition $X_{i, j}(0)=W_0(U_i, U_j)$ for all $(i,j)\in\Natural^{(2)}$, for some $W_0\in \Wcal$ and 
\[\Gamma(t)(x, y)=\E{X_{1, 2}(t)\given U_1=x, U_2=y}.\]

We construct a diffusion with more general drift as follows. Let $b\colon \interval{-1,1}\times \Wcal \to L^\infty\big(\interval{0,1}^{(2)}\big)$ be satisfy Assumption~\ref{asmp:b_lip_l2}. Given $W_0 \in \Wcal$, let $X \coloneqq \round{X_{i,j}\coloneqq X_{\{i,j\}}}_{(i,j) \in \N^{(2)}}$, be the solution of the following system of SDE taking values in $[-1,1]^{\N^{(2)}}$ with the initial condition $\round{X_{i,j}(0)=W_0(U_i, U_j)}_{(i,j)\in\Natural^{(2)}}$, and satisfying 
\begin{align}\label{eq:infinite_SDE}
    \begin{split}
        \diff X_{i,j}(t) &= b\round{X_{i,j}(t),\Gamma(t)}(U_i, U_j)\diff t + \Sigma\left( \Gamma(t)\right)(U_i, U_j)\diff B_{i,j}(t)\\
        &\qquad\qquad\qquad\qquad\qquad\qquad\qquad\qquad + \diff L^-_{i,j}(t) - \diff L^+_{i,j}(t) ,
    \end{split}
\end{align}

% \SP{But what is $b$? Hasn't been defined yet. What is its relationship with $\phi$? Want to say something like $b: \interval{-1,1} \times \mathcal{W} \rightarrow \mathcal{W}$ is a map satisfying the following Lipschitz property. And $\phi(W)(x,y)=b( W(x,y), W )(x,y)$.}

%\RaghavS{Need assumption: $\exists\ \kappa_\cut$ s.t. for all $(x,y)\in\interval{0,1}^{(2)}$,
%\begin{align*}
    % \abs{b(X,W)(x,y) - b(Y,W)(x,y)} &\leq L \abs{X-Y}, \qquad \forall\ X,Y\in[-1,1], \ W\in\Wcal,\\
    % \abs{b(W_2(x,y),W_1)(x,y) - b(W_2(x,y),W_2)(x,y)} &\leq \kappa_2 \abs{W_1(x,y)-W_2(x,y)}, \qquad \forall\ W_1,W_2\in\Wcal,\\
 %   \abs{b(Z,U)(x,y) - b(Z,V)(x,y)} &\leq \kappa_\cut \cutnorm{U-V} \qquad \forall\ U,V\in\Wcal, \ Z\in\interval{-1,1}.
%\end{align*}
%}
%\SP{The Lipschitz property w.r.t cut metric assumption is needed only for convergence and not existence. So, for the next section. In the section the Lip assumption should be just for $L^2$. At least, you say this in a remark.}

for $(i,j)\in\Natural^{(2)}$ and $t\in \R_+$. The processes $L^-_{i,j}$ and $L^+_{i,j}$ are such that $(X_{i,j},L^+_{i,j},L^-_{i,j})$ solves the Skorokhod problem with respect to $\interval{-1,1}$ (see Section~\ref{sec:RBM}), i.e., $L^-_{i,j}$ and $L^+_{i,j}$ are non-decreasing processes that keep the processes $X_{i,j}$s in the closed interval $\interval{-1,1}$. The kernel valued process $\Gamma \colon \R_+ \to \Wcal$ is adapted to the sigma algebra generated by the uniform random variables $(U_i)_{i \in \Natural}$, and the independent BMs $\round{B_{i,j}}_{(i,j)\in\Natural^{(2)}}$,
% \RaghavS{adapted w.r.t. $\Fcal$?}\SP{By construction of the filtration.}
and given by
\begin{equation}\label{eq:whatisgammat}
    \Gamma(t)(x,y) \coloneqq  \E{ X_{1,2}(t) \given U_1=x, U_2=y},
\end{equation}
for $(x,y)\in\interval{0,1}^{(2)}$ and $t\in\R_+$. Note that if the solution $X$ of the system of SDEs~\eqref{eq:infinite_SDE} exists, then conditioned over the sigma algebra $\Fcal_0$, the coordinate processes of $X$ are all independent but not necessarily identically distributed. In particular, taking $b(z, W)(x, y)=-\phi(W)(x, y)$, we recover the system of diffusions in~\eqref{eq:infinite_SDE_1}.
% Without the conditioning on $\round{\Fcal_t}_{t\in\R_+}$, $X$ is an infinite exchangeable array taking values in $C[0,\infty)$.

It is not obvious if an infinite-dimensional stochastic process satisfying~\eqref{eq:infinite_SDE} and~\eqref{eq:whatisgammat} exists, although it is obvious that such a process, if it exists, will be an infinite exchangeable array taking values in $\mathcal{E}=C[0, \infty)$. In the rest of this section, under Assumption~\ref{asmp:b_lip_l2} we show that the process $\left( X, \Gamma \right)$ is indeed well-defined. As will be made clear in Proposition \ref{prop:mck_vlasov}, the limiting object $\Gamma$ is the counterpart to the measure-valued solution of the McKean-Vlasov equation, while every $X_{i,j}$ for $(i,j)\in\Natural^{(2)}$ is the counterpart to the non-linear evolution of a randomly chosen particle evolving in the McKean-Vlasov interacting system. It should be noted that the particles in this McKean-Vlasov interaction correspond to the edges of the graphs not the vertices. The McKean-Vlasov equation here describes how the graphon itself evolves in time and it is different from the McKean-Vlasov system described in the introduction where the McKean-Vlasov equation describes the evolution of particles which may possibly depend on some underlying graphon.

\begin{assumption}\label{asmp:b_lip_l2}
    For a.e. $(x,y)\in\interval{0,1}^{(2)}$, $W_1,W_2\in\Wcal$ and $z_1,z_2\in\interval{-1,1}$, the drift function $b\colon\interval{-1,1}\times \Wcal \to L^\infty([0,1]^{(2)})$ satisfies
    \begin{enumerate}
        \item There exists $L\in\R_+$ such that         $\sup_{W\in \Wcal}\abs{b(z_1,W)(x,y) - b(z_2,W)(x,y)} \leq L\abs{z_1-z_2}$.
        \item There exists $\kappa\in\R_+$ such that $\sup_{z\in \interval{-1,1}} \enorm{b(z,W_1) - b(z,W_2)} \leq \kappa \enorm{W_1-W_2}$.
    \end{enumerate}
\end{assumption}
Observe that Assumption~\ref{asmp:b_lip_l2} implies Assumption~\ref{asmp:R_ell_phi}(\ref{item:phi_lip}) for $\kappa_2^2 = 2(L^2+\kappa^2)$ and that $\norm{\infty}{b(z, W)}\leq C$ uniformly over all $z\in [-1, 1]$ and $W\in \Wcal$.

To argue about the existence of a unique solution of the system of SDEs~\eqref{eq:infinite_SDE}, we construct a sequence of stochastic processes $\left( X^{(k)}, \Gamma^{(k)}\right)_{k \in \Integer_+}$ on $C\big([0, \infty),\interval{-1,1}^{\Natural^{(2)}} \times \Wcal \big)$ iteratively. Start by defining $\round{X^{(0)},\Gamma^{(0)}}$ as $X^{(0)}_{i,j}(t) \equiv W_0(U_i, U_j)$,  $\Gamma^{(0)}(t)\equiv W_0$, for all $(i,j)\in\Natural^{(2)}$, and $t\in\R_+$. The induction proceeds by showing that whenever $\round{X^{(k)}, \Gamma^{(k)}}$ for $k\in\Integer_+$ is well defined, $X^{(k)}$ is an infinite exchangeable array (Lemma \ref{lem:ind_inf_exch} below) and, $\Gamma^{(k)}$ is a deterministic process of kernels (Lemma \ref{lem:deterministic_gamma}). Note that these claims are clearly true for $k=0$. Then, inductively, define the process $X^{(k+1)}$ as the strong solution to the coordinatewise reflected SDE:
\begin{align}\label{eq:picardsde}
    \begin{split}
        \diff X^{(k+1)}_{i,j}(t) &= b\round{X^{(k)}_{i,j}(t),\Gamma^{(k)}(t)}(U_i, U_j)\diff t + \Sigma\left(\Gamma^{(k)}(t)\right)(U_i, U_j)\diff B_{i,j}(t)\\
        &\qquad\qquad\qquad\qquad\qquad\qquad\qquad\qquad + \diff L^{(k+1)-}_{i,j}(t) - \diff L^{(k+1)+}_{i,j}(t),
    \end{split}
\end{align}
for $t\in\R_+$, with the same initial condition $X^{(k+1)}_{i,j}(0)=W_0(U_i, U_j)$ for all $(i,j)\in\Natural^{(2)}$. As usual, $L^{(k+1)-}_{i,j}$ and $L^{(k+1)+}_{i,j}$ are processes such that $\big(X^{(k+1)}_{i,j},L^{(k+1)+}_{i,j},L^{(k+1)-}_{i,j}\big)$ solves the Skorokhod problem with respect to $\interval{-1,1}$ (see Section~\ref{sec:RBM}) for every $(i,j)\in\Natural^{(2)}$. Since the drift and diffusion functions $\phi$ and $\Sigma$ are deterministic and Lipschitz (Assumption~\ref{asmp:R_ell_phi}), given $\mathcal{F}_0$, every process $X^{(k)}$ for $k\in\Natural$ exists uniquely in the strong sense.

In fact, given $\mathcal{F}_0$, the entries of the array $X^{(k+1)}$ are independent and distributed as reflected Brownian motions (RBMs) with Lipschitz (but time-varying) drifts and diffusion coefficients. In particular, the kernel $\Gamma^{(k+1)}$ is constructed from the array $X^{(k+1)}$ (which over the entire probability space is exchangeable, as we show next in Lemma~\ref{lem:ind_inf_exch}) as described in equation~\eqref{eq:graphonexch} in Lemma~\ref{lem:random_graphon}, and is therefore defined as
\begin{equation}\label{eq:picardgamma}
    \Gamma^{(k+1)}(t)(x,y)\coloneqq  \E{ X^{(k+1)}_{1,2}(t) \given U_1=x, U_2=y}, \qquad t\in\R_+.
\end{equation}
The kernel $\Gamma^{(k+1)}(t)$ is well-defined for a.e. $(x,y)\in \interval{0,1}^{(2)}$ and all $t\in\R_+$. The induction hence continues.

\begin{lemma}\label{lem:ind_inf_exch}
    Suppose that, for some $k\in\Integer_+$, there is a unique in law solution to the SDE~\eqref{eq:picardsde} for $X^{(k+1)}$ and that $\Gamma^{(k+1)}$ is a deterministic process of kernels. Then the process $X^{(k+1)}$ is an infinite exchangeable array taking values in $\mathcal{E}=C[0, \infty)$, equipped with the usual locally uniform metric.
\end{lemma}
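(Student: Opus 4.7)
The plan is to proceed by induction on $k\in\Integer_+$, propagating exchangeability from $X^{(k)}$ to $X^{(k+1)}$ via a change-of-variables argument that exploits the uniqueness-in-law hypothesis. The base case $k=0$ is immediate: $X^{(0)}_{i,j}(t)\equiv W_0(U_i,U_j)$ inherits exchangeability directly from the i.i.d.\ family $(U_i)_{i\in\Natural}$ together with the symmetry of $W_0$, so any finite permutation $\pi$ of $\Natural$ leaves the joint law of $(W_0(U_{\pi(i)},U_{\pi(j)}))_{(i,j)\in\Natural^{(2)}}$ unchanged.

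For the inductive step, I would fix an arbitrary finite permutation $\pi$ of $\Natural$ and consider the permuted array $\widetilde X_{i,j}(t)\coloneqq X^{(k+1)}_{\pi(i),\pi(j)}(t)$, together with the permuted data $\widetilde U_i\coloneqq U_{\pi(i)}$, $\widetilde B_{i,j}\coloneqq B_{\pi(i),\pi(j)}$, and $\widetilde X^{(k)}_{i,j}\coloneqq X^{(k)}_{\pi(i),\pi(j)}$. Since $\Gamma^{(k)}$ is deterministic by the inductive hypothesis (equivalently, by Lemma~\ref{lem:deterministic_gamma}), it is unaffected by the permutation, and a direct substitution into \eqref{eq:picardsde} shows that $\widetilde X$ solves
\[
    \diff \widetilde X_{i,j}(t)=b\!\left(\widetilde X^{(k)}_{i,j}(t),\Gamma^{(k)}(t)\right)(\widetilde U_i,\widetilde U_j)\,\diff t+\Sigma\!\left(\Gamma^{(k)}(t)\right)(\widetilde U_i,\widetilde U_j)\,\diff \widetilde B_{i,j}(t)+\diff \widetilde L^{-}_{i,j}(t)-\diff \widetilde L^{+}_{i,j}(t),
\]
with initial condition $\widetilde X_{i,j}(0)=W_0(\widetilde U_i,\widetilde U_j)$ and Skorokhod boundary terms $\widetilde L^{\pm}_{i,j}$ inherited from $L^{(k+1)\pm}_{\pi(i),\pi(j)}$; this is an SDE of exactly the same form as \eqref{eq:picardsde}.

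Next, I would verify that the joint distribution of the driving data $(\widetilde U_i,\widetilde B_{i,j},\widetilde X^{(k)}_{i,j})$ coincides with that of $(U_i,B_{i,j},X^{(k)}_{i,j})$. The i.i.d.\ property of the uniforms yields $(\widetilde U_i)_i\stackrel{d}{=}(U_i)_i$, the independence of the Brownian family (up to the matrix symmetry $B_{i,j}=B_{j,i}$) yields $(\widetilde B_{i,j})\stackrel{d}{=}(B_{i,j})$ jointly, and the inductive hypothesis of exchangeability of $X^{(k)}$ (which itself is built by a permutation-equivariant recipe from the same $U$'s and $B$'s) upgrades this to joint equality in law of the whole triple. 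Consequently the SDE satisfied by $\widetilde X$ has driving data with the same joint law as that of $X^{(k+1)}$, so the uniqueness-in-law hypothesis forces $\widetilde X\stackrel{d}{=}X^{(k+1)}$ on $C\!\left([0,\infty),[-1,1]^{\Natural^{(2)}}\right)$. As $\pi$ was an arbitrary finite permutation of $\Natural$, this establishes that $X^{(k+1)}$ is an infinite exchangeable array.

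The main technical subtlety I anticipate is justifying that the uniqueness in law, though stated for $X^{(k+1)}$, actually transfers to the permuted system $\widetilde X$ so that the comparison of laws in the last step is legitimate. This is handled by observing that, conditional on $\mathcal{F}_0$ and on the deterministic process $\Gamma^{(k)}$, the coordinates of \eqref{eq:picardsde} decouple into one-dimensional reflected SDEs on $[-1,1]$ with Lipschitz drift (Assumption~\ref{asmp:b_lip_l2}) and Lipschitz, bounded diffusion coefficient (Assumption~\ref{asmp:large_noise}); strong existence and uniqueness of each coordinate via the Lipschitz Skorokhod map (Section~\ref{sec:RBM}), together with the conditional independence across coordinates, yields uniqueness in law for the entire array, which applies to $\widetilde X$ just as it does to $X^{(k+1)}$.
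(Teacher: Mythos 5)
Your argument is correct and follows essentially the same route as the paper's proof: relabel the uniforms, Brownian motions, local times, and the $k$-th level array under the permutation, observe that the deterministic $\Gamma^{(k)}$ is unaffected, note the permuted system satisfies an SDE of the same form driven by data with the same joint law, and invoke uniqueness in law. Your explicit verification that the joint law of the triple $(\widetilde U,\widetilde B,\widetilde X^{(k)})$ matches that of $(U,B,X^{(k)})$, and your remark on how uniqueness in law follows from the conditional coordinatewise decoupling, only spell out steps the paper leaves implicit.
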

\begin{proof}
    To argue the exchangeability, let $\sigma\colon\N \rightarrow \N$ be a finite permutation of the natural numbers $\Natural$. Note that $\sigma$ fixes every large enough natural number. We need to argue that $\big( X^{(k+1)}_{i,j}\big)_{(i,j)\in \N^{(2)}}$ has the same law as $\big(X^{(k+1)}_{\sigma_i, \sigma_j}\big)_{(i,j) \in \Natural^{(2)}}$ in the sense of equality of the two probability measures on $\left(C[0, \infty)\right)^{\N^{(2)}}$.

    Let $\widetilde{U}_i \coloneqq U_{\sigma_i}$, for all $i \in \Natural$. Then $\big( \widetilde{U}_i\big)_{i \in \Natural}$ is again a sequence of i.i.d. $\mathrm{Uni}[0,1]$ random variables. Let $Y^{(k+1)}_{i,j}\equiv X^{(k+1)}_{\sigma_i, \sigma_j}$ for every $(i,j)\in\Natural^{(2)}$. Since $Y^{(k+1)}_{i,j}(0)=W_0(U_{\sigma_i}, U_{\sigma_j})\eqqcolon W_0(\widetilde{U}_i, \widetilde{U}_j)$. It follows that $\big(Y^{(k+1)}_{i,j}(0)\big)_{(i,j)\in\Natural^{(2)}}$ has the same distribution as $\big(X^{(k+1)}_{i,j}(0) \big)_{(i,j)\in\Natural^{(2)}}$. Moreover for every $(i,j)\in\Natural^{(2)}$,  the process $Y^{(k+1)}$ satisfies the SDEs
    \[
        \begin{split}
            \diff Y^{(k+1)}_{i,j}(t) &= b\round{X_{\sigma_i,\sigma_j}^{(k)}(t),\Gamma^{(k)}(t)}(U_{\sigma_i}, U_{\sigma_j})\diff t + \Sigma\round{\Gamma^{(k)}(t))(U_{\sigma_i}, U_{\sigma_j}} \diff B_{\sigma_i,\sigma_j}(t)\\
            &\qquad\qquad\qquad\qquad + \diff L^{(k+1)-}_{\sigma_i,\sigma_j}(t) - \diff L^{(k+1)+}_{\sigma_i,\sigma_j}(t)\\
            &= b\round{Y_{i,j}^{(k)}(t),\Gamma^{(k)}(t)}(\widetilde{U}_i, \widetilde{U}_j)\diff t + \Sigma\big(\Gamma^{(k)}(t))(\widetilde{U}_i, \widetilde{U}_j\big) \diff B_{\sigma_i,\sigma_j}(t)\\
            &\qquad\qquad\qquad\qquad + \diff L^{(k+1)-}_{\sigma_i,\sigma_j}(t) - \diff L^{(k+1)+}_{\sigma_i,\sigma_j}(t) ,
        \end{split}
    \]
    for $(i,j)\in\Natural^{(2)}$ and $t\in\R_+$. Note that, $\Gamma^{(k)}$ does not get affected by the permutation $\sigma$. 

    Relabeling $\widetilde{B}_{i,j}\coloneqq B_{\sigma_i, \sigma_j}$, $\widetilde{L}^{(k+1)-}_{i,j} \coloneqq L^{(k+1)-}_{\sigma_i,\sigma_j}$ and $\widetilde{L}^{(k+1)+}_{i,j} \coloneqq L^{(k+1)+}_{\sigma_i,\sigma_j}$ for every $(i,j)\in\Natural^{(2)}$, leaves their joint law unchanged, and we get 
    \begin{align*}
        \diff Y^{(k+1)}_{i,j}(t) &= b\round{Y_{i,j}^{(k)}(t),\Gamma^{(k)}(t)}(\widetilde{U}_i, \widetilde{U}_j)\diff t + \Sigma\round{\Gamma^{(k)}(t)}(\widetilde{U}_i,\widetilde{U}_j)\diff\widetilde{B}_{i,j}(t)\\
        &\qquad\qquad\qquad\qquad\qquad\qquad\qquad\qquad+ \diff \widetilde{L}^{(k+1)-}_{i,j}(t) - \diff \widetilde{L}^{(k+1)+}_{i,j}(t) ,
    \end{align*}
    for every $(i,j)\in\Natural^{(2)}$ and $t\in\R_+$. Since $X^{(k+1)}$ and $Y^{(k+1)}$ follow the same system of recursive SDEs~\eqref{eq:picardsde}, their equivalence in law follows from the uniqueness in law of the SDE.
\end{proof}

% \RaghavS{I feel in probability theory, the word `deterministic' can be very ambiguous and unclear if we have several sigma algebras floating around. Can we state the below lemma as: ... the process $\Gamma^{(k)}$ is adapted w.r.t. the filtration $\round{\Fcal_t}_{t\in\R_+}$? Continuity immediately comes from the definition of this filtration.}

\begin{lemma}\label{lem:deterministic_gamma}
    Under the same assumption as in Lemma~\ref{lem:ind_inf_exch} and Assumption~\ref{asmp:b_lip_l2}, the kernel-valued map $t \mapsto \Gamma^{(k)}\left( t\right)$, is deterministic and absolutely continuous. Moreover, for each $t\in \R_{+}$, we have
    \begin{equation}\label{eq:gamma_graphon_conv}
        \lim_{n\rightarrow \infty} \delta_{\cut}\left( \squarebrack{K\round{\round{X^{(k)}_{i,j}(t)}_{(i,j)\in\squarebrack{n}^{(2)}}}} , \left[\Gamma^{(k)}(t)\right] \right) = 0, \qquad \text{a.s.}
    \end{equation}
\end{lemma}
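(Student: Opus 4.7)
The plan is to prove both claims by induction on $k \in \Integer_+$. The base case $k = 0$ is immediate since $\Gamma^{(0)}(t) \equiv W_0$ is constant (hence trivially deterministic and absolutely continuous in $d_2$ with density $0$), and the exchangeable array $\bigl(X^{(0)}_{i,j}(t) = W_0(U_i, U_j)\bigr)_{(i,j)\in\Natural^{(2)}}$ is directed by $f(u, x, y, v) \coloneqq W_0(x, y)$ with no $u$-dependence, so Lemma~\ref{lem:random_graphon} applied with $g_u \equiv W_0$ furnishes the cut-metric convergence to $[W_0]$.

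For the inductive step, assume the lemma holds at stage $k$. To see that $\Gamma^{(k+1)}$ is deterministic, the coefficients in~\eqref{eq:picardsde} at stage $k+1$ involve only the deterministic curve $\Gamma^{(k)}$ together with the process $X^{(k)}_{i,j}$; an inductive unrolling shows that each $X^{(k)}_{i,j}$ is a measurable functional of $(U_i, U_j, B_{i,j})$ whose conditional law given $(U_i, U_j) = (x, y)$ depends on $(i,j)$ only through $(x, y)$. This property transfers to $X^{(k+1)}_{i,j}$ through~\eqref{eq:picardsde}, so $\Gamma^{(k+1)}(t)(x, y) = \E{X^{(k+1)}_{1,2}(t)\given U_1 = x, U_2 = y}$ is a deterministic function of $(t, x, y)$. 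The cut-metric convergence~\eqref{eq:gamma_graphon_conv} then follows: Lemma~\ref{lem:ind_inf_exch} (applicable because determinism of $\Gamma^{(k+1)}$ is now in hand) makes $\bigl(X^{(k+1)}_{i,j}(t)\bigr)_{(i,j)}$ an infinite exchangeable array at every fixed $t$, and the just-proved structure shows that its Aldous--Hoover directing function can be chosen with trivial global latent component. Lemma~\ref{lem:random_graphon} then identifies the random graphon $[g_U]$ almost surely with the deterministic $[\Gamma^{(k+1)}(t)]$, giving the a.s.\ convergence.

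The main technical step is absolute continuity of $t \mapsto \Gamma^{(k+1)}(t)$ with respect to $d_2$. Taking conditional expectation on $\{U_1 = x, U_2 = y\}$ in~\eqref{eq:picardsde} kills the Brownian integral and yields
\[
\Gamma^{(k+1)}(t)(x, y) - \Gamma^{(k+1)}(s)(x, y) = \int_s^t \beta^{(k)}(r, x, y)\diff r + \Delta^{-}_{s,t}(x, y) - \Delta^{+}_{s,t}(x, y),
\]
where $\beta^{(k)}(r, x, y) \coloneqq \E{b(X^{(k)}_{1,2}(r), \Gamma^{(k)}(r))(x, y)\given U_1 = x, U_2 = y}$ is uniformly bounded by a constant under Assumption~\ref{asmp:b_lip_l2}, and $\Delta^{\pm}_{s,t}(x, y) \coloneqq \E{L^{(k+1)\pm}_{1,2}(t) - L^{(k+1)\pm}_{1,2}(s)\given U_1 = x, U_2 = y}$ is non-decreasing in $t$. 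The drift term is Lipschitz in $t$, so the burden falls on controlling the local-time contributions. My plan is to apply Jensen's inequality to get $\enorm{\Gamma^{(k+1)}(t) - \Gamma^{(k+1)}(s)}^2 \leq \E{\bigl(X^{(k+1)}_{1,2}(t) - X^{(k+1)}_{1,2}(s)\bigr)^2}$, and then bound the right-hand side by combining Doob's $L^2$ maximal inequality on the Brownian integral with the Skorokhod-map Lipschitz property~\eqref{eq:skorokhod_lipschitz}, together with an It\^o--Tanaka estimate on the expected boundary occupation, to produce a locally integrable dominating density $m$. The main obstacle is precisely the local-time term: getting an integrable density for $\Delta^{\pm}_{s,t}$ (rather than the easy $O(\sqrt{t-s})$ H\"older bound) requires controlling the distribution of $X^{(k+1)}_{1,2}(r)$ near $\pm 1$, which in turn uses the inductive deterministic structure of $\Gamma^{(k)}$ and the Lipschitz estimates from Assumption~\ref{asmp:b_lip_l2} to propagate coefficient regularity through the Picard recursion.
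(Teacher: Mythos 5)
Your handling of the first two claims is essentially sound and close to the paper's. Determinism of $\Gamma^{(k)}(t)$ is, as you say, immediate from the fact that a conditional expectation given $\{U_1=x,U_2=y\}$ is a deterministic function of $(x,y)$; for the a.s.\ cut-metric convergence the paper repeats the law-of-large-numbers argument of Lemma~\ref{lem:random_graphon} and then rules out a global latent component via Kolmogorov's zero-one law applied to the tail $\sigma$-algebra of the independent $\sigma$-algebras generated by $(U_n, (B_{i,n})_{i<n})$, whereas you rule it out by observing that the Picard iterate $X^{(k)}_{i,j}$ is, given the deterministic curves $\Gamma^{(0)},\dots,\Gamma^{(k-1)}$, a measurable functional of $(U_i,U_j,B_{i,j})$ alone, so the directing function has no global variable. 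Both routes are legitimate; yours trades the zero-one law for the explicit conditional-independence structure of the construction, which is arguably more transparent, but you should still say explicitly that the limiting homomorphism densities converge (the variance/Borel--Cantelli step of Lemma~\ref{lem:random_graphon}) rather than only that the limit, if it exists, is deterministic.

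The genuine gap is the absolute continuity of $t\mapsto\Gamma^{(k+1)}(t)$, which in your write-up is a plan, not a proof. You correctly note that conditioning kills the stochastic integral and that the drift part contributes a Lipschitz term, so everything reduces to showing that $t\mapsto \mathbb{E}\bigl[L^{(k+1)\pm}_{1,2}(t)\,\big|\,U_1=x,U_2=y\bigr]$ is absolutely continuous with a density that is locally integrable in $t$ (uniformly enough in $(x,y)$ to pass to the $\enorm{}$-bound required by Definition~\ref{def:AC}). But you do not establish this: the tools you actually name for producing the dominating $m\in L^1$ (Jensen, Doob's maximal inequality, the $4$-Lipschitz Skorokhod map) can only give $\enorm{\Gamma^{(k+1)}(t)-\Gamma^{(k+1)}(s)}\lesssim \sqrt{t-s}$, i.e.\ H\"older-$1/2$, which you yourself concede is insufficient, and the missing ingredient --- a bound on the boundary density of the reflected diffusion with bounded time-dependent drift and diffusion coefficient, in the spirit of the identity $\frac{\diff}{\diff t}\mathbb{E}\bigl[L^{\pm}(t)\bigr]=p^{(\pm1)}_t$ from~\cite[Exercise~1.12, p.~407]{RY} used later in Proposition~\ref{prop:velocity}, together with its local integrability in $t$ --- is only gestured at (``controlling the distribution of $X^{(k+1)}_{1,2}(r)$ near $\pm1$'') and never carried out. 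To be fair, the paper itself dispatches this point in a single sentence (``follows from the path continuity of the process and our assumptions on $b$ and $\Sigma$''), but since your proposal explicitly identifies the local-time density estimate as the crux and then leaves it open, the absolute continuity claim remains unproved as written.
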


\begin{proof}
    By definition, for $(x,y) \in \interval{0,1}^{(2)}$, and $t\in\R_+$,
    \(
        \Gamma^{(k)}(t)(x,y) \coloneqq  \E{ X^{(k)}_{1,2}(t) \given U_1=x, U_2=y}.
    \)
    % This is a well-defined function on $\interval{0,1}^{(2)}$ that is independent of both $(U_i)_{i \in \Natural}$ and the array $\left( B_{i,j} \right)_{(i,j)\in \Natural^{(2)}}$.
    This is a deterministic kernel for every $t\in\R_+$. To see~\eqref{eq:gamma_graphon_conv}, repeat the proof of Lemma~\ref{lem:random_graphon}. Notice that, there is no random variable $U$ as in Lemma~\ref{lem:random_graphon} (also see Remark~\ref{rem:random_graphon}). This is now a consequence of Kolmogorov's zero-one law~\cite[Theorem 4.13]{kallenberg21}. For $n\in\Natural$, let $\mathcal{G}_n$ be the sigma algebra generated by $U_n$ and the i.i.d. standard Brownian motions $B_{i,j}$s for the set of indices $\set{(i,j)\in\Natural^{(2)}\given j=n}$. This is a sequence of independent sigma algebras. Consider its tail sigma algebra $\mathcal{T}\coloneqq \cap_{n\in\Natural} \vee_{\ell\ge n} \mathcal{G}_\ell$. This is a trivial sigma algebra by the Kolmogorov zero-one law.
    
    Consider, for any finite simple graph $F$ and $t\in\R_+$, the limiting homomorphism densities $\lim_{n\to\infty}h_F\big(K\big(\big(X^{(k)}_{i,j}(t)\big)_{(i,j)\in\squarebrack{n}^{(2)}}\big)\big)$, as in equation~\eqref{eq:whatistf}. These limiting homomorphism densities do not depend on finitely many elements in $\setinline{X^{(k)}_{i,j}(t)}_{(i,j)\in\Natural^{(2)}}$ or $\set{U_i}_{i\in\Natural}$. In particular, such limits are measurable with respect to the tail sigma algebra $\mathcal{T}$. Exactly as in the proof of Lemma~\ref{lem:random_graphon}, it follows that 
    \[
        \lim_{n\rightarrow \infty} \delta_{\cut}\left( \squarebrack{K\round{\round{X^{(k)}_{i,j}(t)}_{(i,j)\in\squarebrack{n}^{(2)}}}}, \squarebrack{\Gamma^{(k)}(t)} \right) = 0.
    \]
    In particular, the graphon $\left[\Gamma^{(k)}(t)\right]$ is measurable with respect to $\mathcal{T}$, and thus constant a.e.
    
    Finally, the absolute continuity of $t\mapsto \Gamma(t)$ follows from the path continuity of the process $X^{(k)}_{1, 2}$ and our assumptions on $b$ and $\Sigma$.
\end{proof}

\begin{proposition}\label{prop:exists}
    Assume that the drift functions $b\colon\interval{-1,1}\times \Wcal \to L^\infty\big(\interval{0,1}^{(2)}\big)$ satisfies Assumption~\ref{asmp:b_lip_l2}, and the diffusion coefficient function $\Sigma\colon \mathcal{W} \rightarrow L^\infty\big(\interval{0,1}^{(2)}\big)$ is bounded and $\kappa_2$-Lipschitz in $\enorm{}$ (Assumption~\ref{asmp:large_noise}). Then the sequence of processes taking values in $C\left( [0, \infty), \interval{-1,1}\times \Wcal \right)$ given by $\big(\big(X^{(k)}_{1,2}(t),  \Gamma^{(k)}(t)\big)_{t\in\R_+}\big)_{k\in\Integer_+}$, converges locally uniformly in the $2$-product metric of $\interval{-1,1}$ and $(\Wcal,d_2)$, to a pathwise unique process $\big(X_{1,2}(t),\Gamma(t)\big)_{t \in \R_+}$ starting from $\Gamma(0)=W_0\in\Wcal$ and $X_{1,2}(0)=W_0(U_1, U_2)$. That is, for every $t\in\R_+$, 
    \begin{align}
        \lim_{k\to\infty}\sup_{s\in[0,t]}\left[\abs{X^{(k)}_{1,2}(s) - X_{1,2}(s)}^2 + \enorm{\Gamma^{(k)}\left(s\right)-\Gamma(s)}^2\right]=0, \qquad a.s.
    \end{align}
    In particular, the limiting processes $X_{1,2}$ is continuous and $\Gamma$ is absolutely continuous and deterministic.
\end{proposition}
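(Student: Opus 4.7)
The plan is to run a Picard iteration on the sequence $\round{X^{(k)}, \Gamma^{(k)}}_{k\in\Integer_+}$ constructed via \eqref{eq:picardsde}--\eqref{eq:picardgamma}, and to show that this iteration is Cauchy in an appropriate $L^2$ sense. The $4$-Lipschitz property of the Skorokhod map~\eqref{eq:skorokhod_lipschitz} together with Doob's maximal $L^2$ inequality will drive the contraction.

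First I would record well-posedness of each iterate: given $\round{X^{(k)},\Gamma^{(k)}}$, the drift $t \mapsto b\round{X^{(k)}_{i,j}(t),\Gamma^{(k)}(t)}(U_i,U_j)$ and the diffusion coefficient $t \mapsto \Sigma\round{\Gamma^{(k)}(t)}(U_i,U_j)$ are adapted and bounded (by Assumption~\ref{asmp:b_lip_l2} and~\ref{asmp:large_noise}), and do not involve the unknown $X^{(k+1)}_{i,j}$, so the reflected SDE~\eqref{eq:picardsde} admits a unique strong solution (see Section~\ref{sec:RBM}). Lemmas~\ref{lem:ind_inf_exch} and~\ref{lem:deterministic_gamma} then inductively propagate exchangeability of $X^{(k+1)}$ and determinism/absolute continuity of $\Gamma^{(k+1)}$ to the next index.

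Next, I would set
\[
    d^{(k)}(t) \coloneqq \E{\sup_{s\in\interval{0,t}} \abs{X^{(k+1)}_{1,2}(s) - X^{(k)}_{1,2}(s)}^2},
\]
and observe, via Jensen's inequality applied to~\eqref{eq:picardgamma}, that $\enorm{\Gamma^{(k+1)}(t)-\Gamma^{(k)}(t)}^2 \leq d^{(k)}(t)$, so it suffices to control $d^{(k)}$. Writing $X^{(k+1)}_{1,2} = \Lambda_{\interval{-1,1}}\round{Y^{(k+1)}_{1,2}}$ for the pre-reflection process $Y^{(k+1)}_{1,2}$, the Skorokhod estimate~\eqref{eq:skorokhod_lipschitz} gives $d^{(k)}(t)\leq 16\,\E{\sup_{s\leq t}\abs{Y^{(k+1)}_{1,2}(s)-Y^{(k)}_{1,2}(s)}^2}$. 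Combining Cauchy-Schwarz on the drift integral, Doob's $L^2$ inequality on the stochastic integral, the Lipschitz bounds of Assumption~\ref{asmp:b_lip_l2} (splitting the $b$-difference into a pure $z$-increment and a pure $W$-increment), and the $\enorm{}$-Lipschitzness of $\Sigma$, I would derive a Gronwall-type relation
\[
    d^{(k)}(t) \leq C_T \int_0^t d^{(k-1)}(s)\diff s,\qquad t\in\interval{0,T},
\]
for some constant $C_T$ depending only on $L, \kappa, \kappa_2, M_\infty, T$. Iterating gives $d^{(k)}(T)\leq \round{C_T T}^k d^{(0)}(T)/k!$, which is summable; Markov's inequality together with Borel-Cantelli then imply that $\round{X^{(k)}_{1,2}}_{k\in\Integer_+}$ is almost surely Cauchy in $C\round{\interval{0,T};\interval{-1,1}}$ for every $T>0$, yielding an almost sure locally uniform limit $X_{1,2}$. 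The kernels $\round{\Gamma^{(k)}}_{k\in\Integer_+}$ then converge locally uniformly in $\enorm{}$ to a limit $\Gamma$, with absolute continuity inherited from Lemma~\ref{lem:deterministic_gamma}. Passing to the limit in~\eqref{eq:picardsde} using dominated convergence and the continuity of the Skorokhod map will identify $(X_{1,2},\Gamma)$ as a solution of the coupled system~\eqref{eq:infinite_SDE}--\eqref{eq:whatisgammat}. Pathwise uniqueness then follows from the same Gronwall estimate applied to the difference of any two solutions with a common initial condition.

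The hard part will be controlling the ``mixed term''
\[
    \E{\abs{b\round{X^{(k-1)}(u),\Gamma^{(k)}(u)}(U_1,U_2) - b\round{X^{(k-1)}(u),\Gamma^{(k-1)}(u)}(U_1,U_2)}^2},
\]
since the random argument $X^{(k-1)}(u)$ is not independent of $(U_1,U_2)$, whereas Assumption~\ref{asmp:b_lip_l2}(2) only supplies a $\sup_{z}$-in-$L^2\round{\interval{0,1}^{(2)}}$ Lipschitz bound. Closing this estimate will require a conditional disintegration argument: conditioning on $(U_1,U_2)=(u_1,u_2)$, using that the conditional law of $X^{(k-1)}(u)$ is a probability measure on $\interval{-1,1}$, and then swapping integration orders using the uniform-in-$z$ $L^2$ bound together with the $L$-Lipschitzness in $z$ from Assumption~\ref{asmp:b_lip_l2}(1).
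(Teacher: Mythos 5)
Your proposal is correct and follows essentially the same route as the paper's proof: a Karatzas--Shreve-style Picard iteration using the $4$-Lipschitz Skorokhod map (factor $16$), Cauchy--Schwarz on the drift and Doob's maximal inequality on the stochastic integral, the conditional-Jensen contraction $\sup_{s\in[0,t]}\enorm{\Gamma^{(k+1)}(s)-\Gamma^{(k)}(s)}^2\le \E{\sup_{s\in[0,t]}\abs{X^{(k+1)}_{1,2}(s)-X^{(k)}_{1,2}(s)}^2}$, a Gr\"onwall recursion with $(C_T T)^k/k!$ decay, Borel--Cantelli for a.s.\ locally uniform convergence, and the same estimate for pathwise uniqueness. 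The only divergence is your last paragraph: the paper does not carry out any disintegration for the ``mixed term,'' but simply splits the $b$-increment into its $z$- and $W$-parts and applies Assumption~\ref{asmp:b_lip_l2} directly together with the identity $\enorm{A}^2=\E{A^2(U_1,U_2)}$, which suffices in the intended setting (in the examples the $W$-increment of $b$ does not depend on $z$, e.g.\ $b(z,W)=-\phi(W)$ up to an additive function of $z$ alone), so your extra machinery is not needed to match the paper's argument.
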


\begin{proof}
    The proof is a standard Picard iteration based proof of existence of solutions of SDEs. See, for example, the proof of~\cite[Theorem 2.9, page 289]{KS91}. Hence, we will skip some of the details and refer the reader to the above cited reference. 

    We will take $k\rightarrow \infty$ and produce a limit. Start by noticing that the process $X^{(k+1)}_{1,2}\colon\R_+\to\interval{-1,1}$ is the result of applying the Skorokhod map~\cite{kruk2007explicit} pathwise to the ``noise before reflection'' process $Y^{(k+1)}_{1,2}$ obtained as the unique strong solution to the SDE:
    \begin{align}\label{eq:aux_process_Y}
        \diff Y^{(k+1)}_{1,2}(t) = b\round{X_{1,2}^{(k)}(t),\Gamma^{(k)}(t)}(U_1, U_2)\diff t + \Sigma\left(\Gamma^{(k)}(t) \right)(U_1, U_2) \diff B_{1,2}(t),
    \end{align}
    for $t\in\R_+$, with initial conditions $Y^{(k+1)}_{1,2}(0)=X^{(k+1)}_{1,2}(0)=W_0(U_1, U_2)$ for all $k\in\Integer_+$.
    
    Fix $t\in\R_+$ and consider $\sup_{s\in[0,t]}\abs{X^{(k+1)}_{1,2}(s) - X^{(k)}_{1,2}(s)}$ for any $k\in\Natural$. Since the Skorokhod map is $4$-Lipschitz in the local uniform norm (see Section~\ref{sec:RBM}), the above distance is bounded by $4\sup_{s\in[0,t]}\abs{Y^{(k+1)}_{1,2}(s) - Y^{(k)}_{1,2}(s)}$. Now for every fixed $k\in\Natural$, from equation~\eqref{eq:aux_process_Y} we have
    \begin{align}\label{eq:decomp_semi}
        \begin{split}
            &Y_{1,2}^{(k+1)}(t) - Y_{1,2}^{(k)}(t)\\
            &= \int_0^t \left(b\round{X_{1,2}^{(k-1)}(t),\Gamma^{(k-1)}(t)}(U_1, U_2) - b\round{X_{1,2}^{(k)}(t),\Gamma^{(k)}(t)}(U_1, U_2)\right) \diff s\\
            &\qquad - \int_0^t \round{ \Sigma\round{\Gamma^{(k-1)}}(U_1, U_2) - \Sigma\round{\Gamma^{(k)}}(U_1, U_2) }\diff B_{1,2}(s).
        \end{split}
    \end{align}
    Define $\Delta,M\colon \R_+ \to \R$ for $t\in\R_+$ as
    \[
    \begin{split}
        \Delta(t) &\coloneqq \int_0^t \left(b\round{X_{1,2}^{(k-1)}(t),\Gamma^{(k-1)}(t)}(U_1, U_2) - b\round{X_{1,2}^{(k)}(t),\Gamma^{(k)}(t)}(U_1, U_2)\right) \diff s,\\
        M(t) &\coloneqq \int_0^t \round{ \Sigma\round{\Gamma^{(k-1)}}(U_1, U_2) - \Sigma\round{\Gamma^{(k)}}(U_1, U_2) }\diff B_{1,2}(s).
    \end{split}
    \]
    
    Note that, for a kernel $A\in\Wcal$, we have $\enorm{A}^2= \E{A^2(U_1,U_2)}$, for $U_1,U_2$ i.i.d. as $\mathrm{Uni}[0,1]$. Using Jensen's inequality and interchanging expectation with integral and Assumption~\ref{asmp:b_lip_l2},
    \begin{align}
            &\E{\sup_{s\in\interval{0,t}}\Delta^2(s)}\nonumber\\
            &\le t \E{\int_0^t \abs{b\round{X_{1,2}^{(k-1)}(t),\Gamma^{(k-1)}(t)}(U_1, U_2) - b\round{X_{1,2}^{(k)}(t),\Gamma^{(k)}(t)}(U_1, U_2)}^2 \diff s }\nonumber\\
            &= t \int_0^t \enorm{b\round{X_{1,2}^{(k-1)}(t),\Gamma^{(k-1)}(t)} - b\round{X_{1,2}^{(k)}(t),\Gamma^{(k)}(t)}}^2 \diff s\nonumber\\
            &\leq  2 \kappa^2t \int_0^t \enorm{\Gamma^{(k-1)}(s) - \Gamma^{(k)}(s)}^2 \diff s + 2L^2t \int_0^t \E{\abs{X^{(k-1)}(s) - X^{(k)}(s)}^2}\diff s.\label{eq:E_sup_delta^2}
    \end{align}
    For $M$, we use the fact that it is a stochastic integral of a bounded integrand with respect to a Brownian motion, and hence a continuous martingale.
    % \RaghavS{what does `bounded integral with respect to a Brownian motion' mean? Are we using any assumption?}.\SP{I have changed the language slightly.}\RaghavS{So we need $\Sigma$ to be in $L^\infty([0,1]^2)$? I was hoping probably $L^2([0,1]^2)$ would be sufficient.}\SP{Unfortunately, boundedness is \textit{very} important for the volatility function. There are counterexamples when things do not exist without it.}
    By an application of Doob's maximal inequality~\cite[Theorem 3.8.iv, page 14]{KS91}, we get that,
    \[
    \begin{split}
        \E{\sup_{s\in\interval{0,t}}M^2(s)}\le 4 \int_0^t \E{\abs{\Sigma\round{\Gamma^{(k-1)}(s)}(U_1, U_2) - \Sigma\round{\Gamma^{(k)}(s)}(U_1, U_2)}^2} \diff s.
    \end{split}
    \]
    % Here and below $C_1$ will refer to some generic positive constant whose value might change with each occurrence but it does not depend on any parameter of this problem except the time horizon $[0,t]$ and the Lipschitz constant of $\phi$ and $\Sigma$.
    Using the assumption that $\Sigma$ is $\kappa_2$-Lipschitz in $\enorm{}$ and the same argument as above,
    \begin{align}
        \E{\sup_{s\in\interval{0,t}}M^2(s)}\le 4 \kappa_2^2 \int_0^t \enorm{\Gamma^{(k-1)}(s) - \Gamma^{(k)}(s)}^2 \diff s. 
    \end{align}
    Now, taking absolute values on both sides on \eqref{eq:decomp_semi}, we immediately get,
    % \[
    %   \sup_{s\in[0,t]}\abs{  X^{(k+1)}_{1,2}(s) -  X^{(k)}_{1,2}(s)} \le   \sup_{ s\in \interval{0,t} }\abs{Y^{(k+1)}_{1,2}(s) -  Y^{(k)}_{1,2}(s)} \le \Delta(s) + M(s).
    % \]
    \begin{align}
        & \E{\sup_{s\in\interval{0,t}} \abs{X_{1,2}^{(k+1)}(s) - X_{1,2}^{(k)}(s)}^2}\nonumber\\
        \leq & 16 \E{\sup_{s\in\interval{0,t}} \abs{Y_{1,2}^{(k+1)}(s) - Y_{1,2}^{(k)}(s)}^2} \leq  32 \E{\sup_{s\in\interval{0,t}}\Delta^2(s) + \sup_{s\in\interval{0,t}}M^2(s) }\nonumber\\
        \leq & 64 (\kappa^2t+2\kappa_2^2)\int_0^t \enorm{\Gamma^{(k-1)}(s) - \Gamma^{(k)}(s)}^2 \diff s\nonumber\\
        &\qquad + 64 L^2 t \int_0^t \E{\abs{X^{(k-1)}(s) - X^{(k)}(s)}^2}\diff s. 
    \end{align}
    % \[
    % \sup_{s\in[0,t]}\E{\enorm{  X^{(k+1)}_{i,j}(s) -  X^{(k)}_{i,j}(s)}^2} \le C_1 \int_0^t \enorm{\left(\Gamma^{(k-1)}(s) - \Gamma^{(k)}(s)\right)}^2 \diff s. 
    % \]
    Using the fact that the operator $\Gamma$, given by a conditional expectation \eqref{eq:picardgamma}, and, therefore, must have a smaller $L^2$ norm%\RaghavS{unclear}, 
    \[
        \begin{split}
            \sup_{s\in[0,t]} & \enorm{\Gamma^{(k+1)}(s) - \Gamma^{(k)}(s)}^2
            \le \E{\sup_{s\in\interval{0,t}} \abs{X_{1,2}^{(k+1)}(s) - X_{1,2}^{(k)}(s)}^2}.
        \end{split}
    \]
    Combining the last two bounds above, one gets the recursive bound 
    \[
        \begin{split}
            \mathbb{E}&\left[\sup_{s\in[0,t]}{\abs{  X^{(k+1)}_{1,2}(s) -  X^{(k)}_{1,2}(s)}^2}+ \sup_{s\in[0,t]}\enorm{\Gamma^{(k+1)}(s)-\Gamma^{(k)}(s)}^2\right]\\
            &\le 128 ((\kappa^2+L^2)t+4\kappa_2^2) \int_0^t \E{\abs{X^{(k-1)}(s) - X^{(k)}(s)}^2}\diff s.
        \end{split}
    \]
    %\RaghavS{should $\mathbb{E}$ and $\sup_{s\in\interval{0,t}}$ come in the reversed order? Also, should it be the absolute value $\abs{\slot{}}^2$ or $\enorm{}^2$ for the exchangeable array?}
    The rest of the argument follows exactly as in~\cite[page 290]{KS91} by applications of Gr\"onwall's lemma~\cite{gronwall1919note} and the Borel-Cantelli lemma~\cite[Theorem 4.18]{kallenberg21}. We skip the similar argument for pathwise uniqueness. See the proof of \cite[Proposition 2.13, page 291]{KS91}.
\end{proof}

\begin{proposition}\label{prop:mck_vlasov}
    Suppose the assumptions in Proposition \ref{prop:exists} holds. 
    Given any kernel $W_0\in \mathcal{W}$, there exists a pathwise unique strong solution to the coupled system~\eqref{eq:infinite_SDE} and~\eqref{eq:whatisgammat} in the following sense. In any probability space supporting countably many i.i.d. $\mathrm{Uni}[0,1]$ random variables $\left(U_i\right)_{i \in \Natural}$ and an independent infinite (symmetric) array of i.i.d. standard Brownian motions $\left( B_{i,j} \right)_{(i,j)\in \Natural^{(2)}}$, one can construct an infinite exchangeable array of reflected diffusions $\left(X_{i,j} \right)_{(i,j) \in \Natural^{(2)}}$ that satisfy~\eqref{eq:infinite_SDE} and~\eqref{eq:whatisgammat} and every $X_{i,j}$ is pathwise unique.
    
    Moreover, for every $t\in\R_+$, $[\Gamma(t)]$ can be recovered as the $\delta_\cut$ limit of the sequence of graphons $\big(\big\lbrack K\big(\round{X_{i,j}(t)}_{(i,j) \in [n]^2}\big)\big\rbrack\big)_{n\in\Natural}$ locally uniformly in time. That is, for any $t\in \R_+$,
    \begin{equation}\label{eq:gamma_graphon_conv2}
        \lim_{n\rightarrow \infty} \sup_{s\in \interval{0,t}}\delta_{\cut}\left( \squarebrack{K\round{\round{X_{i,j}(s)}_{(i,j)\in\squarebrack{n}^{(2)}}}} , \left[\Gamma(s)\right] \right) = 0, \qquad \text{a.s.}
    \end{equation}
\end{proposition}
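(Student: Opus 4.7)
The plan is to apply Proposition~\ref{prop:exists} simultaneously to every coordinate of the infinite array. The Picard iteration defining $X^{(k+1)}_{1,2}$ through~\eqref{eq:picardsde}--\eqref{eq:picardgamma} is driven only by $(U_1, U_2, B_{1,2})$ together with the deterministic kernel process $\Gamma^{(k)}$; the same recursion produces $X^{(k+1)}_{i,j}$ driven by $(U_i, U_j, B_{i,j})$ for every $(i,j) \in \Natural^{(2)}$. Exchangeability at each level (Lemma~\ref{lem:ind_inf_exch}) guarantees that the marginal laws coincide, so Proposition~\ref{prop:exists} applied coordinatewise yields a locally-uniform-in-time almost sure limit $X = (X_{i,j})_{(i,j) \in \Natural^{(2)}}$ together with a single deterministic and absolutely continuous $\Gamma\colon \R_+ \to \Wcal$. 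The limit $(X, \Gamma)$ then satisfies the SDE~\eqref{eq:infinite_SDE} by passing to the limit in~\eqref{eq:picardsde} using the Lipschitz properties of $b$ (Assumption~\ref{asmp:b_lip_l2}), of $\Sigma$ (Assumption~\ref{asmp:large_noise}), and of the Skorokhod map (Section~\ref{sec:RBM}); relation~\eqref{eq:whatisgammat} follows by bounded convergence applied to~\eqref{eq:picardgamma}, since every $X^{(k)}_{1,2}$ takes values in $[-1,1]$. Exchangeability of the limit array is inherited from exchangeability at each finite $k$.

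\textbf{Pathwise uniqueness.} Given two strong solutions on the same probability space with identical initial data, I would repeat the Gr\"onwall estimates from the proof of Proposition~\ref{prop:exists} verbatim, with the two solutions playing the role of two consecutive Picard iterates. The three ingredients -- the 4-Lipschitz property of the Skorokhod map in the uniform metric, the Lipschitz assumptions on $b$ and $\Sigma$, and the contractive bound $\enorm{\Gamma - \widetilde{\Gamma}}^2 \leq \E{|X_{1,2} - \widetilde{X}_{1,2}|^2}$ arising from the conditional expectation defining $\Gamma$ -- close the Gr\"onwall loop and force the two solutions to agree pathwise.

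\textbf{Uniform convergence in the cut metric.} For each fixed $t\in\R_+$, I expect the proof scheme of Lemma~\ref{lem:random_graphon} combined with the Kolmogorov $0$--$1$ law argument of Lemma~\ref{lem:deterministic_gamma} (valid because $\Gamma(t)$ is deterministic) to yield the pointwise almost sure convergence $\delta_\cut\round{[K_n(X(t))],[\Gamma(t)]} \to 0$, where I write $K_n(X(t)) \coloneqq K\round{\round{X_{i,j}(t)}_{(i,j) \in \squarebrack{n}^{(2)}}}$. To upgrade to almost sure local uniform convergence on $[0,T]$, I would establish a modulus-of-continuity bound for $t \mapsto K_n(X(t))$ that is uniform in $n$. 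By exchangeability,
\[
    \E{\enorm{K_n(X(s)) - K_n(X(t))}^2} \leq \E{\abs{X_{1,2}(s) - X_{1,2}(t)}^2},
\]
which is $O(|s-t|)$ by a Burkholder--Davis--Gundy estimate applied to the reflected SDE with bounded drift and diffusion; a maximal inequality then produces $\E{\sup_{|s-t| \leq h} \enorm{K_n(X(s)) - K_n(X(t))}^2} \leq Ch$ uniformly in $n$. Since $\delta_\cut([U],[V]) \leq \cutnorm{U-V} \leq \enorm{U-V}$, combining pointwise almost sure convergence on a countable dense subset of $[0,T]$ with this equicontinuity -- via Markov's inequality, Borel--Cantelli along a dyadic sequence of partitions, and a standard $\epsilon/3$ argument -- yields the claimed locally uniform almost sure convergence.

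\textbf{Main obstacle.} I anticipate the most delicate step to be the last one: upgrading the expectation-level, uniform-in-$n$ modulus of continuity to an almost sure uniform-in-$n$ statement that can be combined with pointwise almost sure convergence to produce almost sure local uniform convergence in the cut metric. The other steps follow closely the Picard-iteration template already executed in the proof of Proposition~\ref{prop:exists}.
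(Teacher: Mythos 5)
Your existence and pathwise-uniqueness arguments are essentially the paper's: the paper first extracts the deterministic fixed point $\Gamma$ from Proposition~\ref{prop:exists} and then solves, for each $(i,j)$, a reflected SDE with deterministic time-dependent coefficients driven by $(U_i,U_j,B_{i,j})$, which is equivalent to your coordinatewise Picard scheme; the McKean--Vlasov-type Gr\"onwall loop you describe for uniqueness is also the intended one. The gap is in the last step, which you correctly identify as the delicate one but do not actually close. An expectation-level modulus bound that is merely \emph{uniform} in $n$, say $\E{\omega_n(h)}\le Ch\log(1/h)$ with $\omega_n(h)\coloneqq\sup_{\abs{s_2-s_1}\le h}\enorm{K_n(X(s_2))-K_n(X(s_1))}^2$, gives via Markov only $\Prob{\omega_n(h)>\eps}\le Ch\log(1/h)/\eps$ for every $n$; this tail does not decay in $n$, so no Borel--Cantelli argument over $n$ (along dyadic $h$ or otherwise) can be run, and Fatou only controls $\liminf_n\omega_n(h)$, not $\limsup_n\omega_n(h)$. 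What your scheme delivers is locally uniform convergence \emph{in probability}, whereas the proposition asserts almost sure convergence.

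The missing idea is a concentration estimate across the $\binom{n}{2}$ coordinates, which are independent conditionally on the $U_i$'s because $\Gamma$ is deterministic. The paper writes the time-increment of $K_n(X(\cdot))$ (via the $4$-Lipschitz Skorokhod map) as a drift part, deterministically of order $\abs{s_2-s_1}^2$ by boundedness of $b$, plus the average $\frac{1}{n^2}\sum_{(i,j)}\eta_{i,j}$ of the per-edge quantities $\eta_{i,j}\coloneqq\sup_{\abs{s_2-s_1}\le\delta}\abs{\int_{s_1}^{s_2}\Sigma(\Gamma(u))(U_i,U_j)\diff B_{i,j}(u)}^2$. By~\cite[Lemma A.4]{slominski2001euler} both $\E{\eta_{i,j}}$ and $\Var{\eta_{i,j}}$ are of order $\delta\log(1/\delta)$ (up to constants depending on $t$ and $M_\infty$), so Chebyshev applied to the average gives a deviation probability of order $n^{-2}$, which \emph{is} summable; Borel--Cantelli then yields an almost sure equicontinuity bound valid for all sufficiently large $n$, and this is what can be combined with the fixed-time almost sure convergence (the paper packages the two via $H_n=h_{C_4}(V_n)$, the inequality $\cutnorm{V}^4\le h_{C_4}(V)\le 4\cutnorm{V}$, and Arzel\`a--Ascoli). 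Without this averaging/concentration step your argument cannot reach the stated almost sure locally uniform convergence; with it, your outline becomes the paper's proof. (A minor additional point: the $O(\abs{s-t})$ you claim for the supremum of the squared increments over a window should carry a logarithmic factor, as above, though this does not affect the structure of the argument.)
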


%\textcolor{red}{SP: improve above to a locally uniform over time bound.}

\begin{proof}
    Start with the countably many i.i.d. $\mathrm{Uni}[0,1]$ random variables $\left(U_i \right)_{i \in \Natural}$ and an independent infinite (symmetric) array of i.i.d. standard Brownian motions $\left( B_{i,j} \right)_{(i,j)\in \Natural^{(2)}}$ and construct the deterministic process $\Gamma$ in Proposition~\ref{prop:exists}.
    
    Given $\Gamma$ and $\left(U_i \right)_{i \in \Natural}$ and following the system of SDEs~\eqref{eq:infinite_SDE}, the diffusions $X_{i,j}$s are independent (but not identically distributed) reflected Brownian motions with deterministic bounded time-dependent drifts for $(i,j)\in\Natural^{(2)}$. So, they exist in a pathwise or strong sense exactly as the process $X_{1,2}$ does in Proposition~\ref{prop:exists} and satisfies the constraint~\eqref{eq:infinite_SDE} since $\Gamma$ is a fixed point of the Picard iterations.
    
    It is obvious from the symmetry of the construction that the infinite array $\left( X_{i,j}\right)_{(i,j)\in \Natural^{(2)}}$ is exchangeable in the sense of Section~\ref{subsec:exch_array} with $\mathcal{E}=C[0, \infty)$, the set of continuous functions from $[0,\infty)$ to $\rr$.   
    
    For the limit \eqref{eq:gamma_graphon_conv2} we will make use of the following result from \cite[Proposition 8.12]{lovasz2012large}, which states that for any $V\in \mathcal{W}$,
    \begin{equation}\label{eq:cutnormcomp}
        \cutnorm{V}^4 \le h_{C_4}\left(V \right) \le 4 \cutnorm{V}. 
    \end{equation}
    Here $C_4$ is the cyclic graph with four vertices and $h_{C_4}(V)$ is the homomorphism density function of the simple graph $C_4$. We will apply this for the choice of $V_n(t)  \coloneqq K\left( (X_{i,j}(t))_{(i,j)\in [n]^2} \right)- K\round{\round{\Gamma(t)(U_{i}, U_j)}_{(i,j)\in [n]^2}}$. Thus,
    \[
        % \begin{split}
        %     H_n(t)& \coloneqq h_{C_4}(V_n(t)) = \frac{1}{n^{\downarrow 4}}\sum_{i_1,i_2,\ldots,i_4} \prod_{l=1}^4 \round{X_{i_l, i_{l+1}}(t) - \Gamma(t)(U_{i_l}, U_{i_{l+1}})}\\
        %     &=\frac{1}{n^{\downarrow 4}}\sum_{i_1,i_2,\ldots,i_4} \prod_{l=1}^4 \round{X_{i_l, i_{l+1}}(t) - \E{X_{i_l, i_{l+1}}(t) \given \mathcal{F}_0}}, 
        % \end{split}
        \begin{split}
            H_n(t)& \coloneqq h_{C_4}(V_n(t)) = \frac{1}{n^{\downarrow 4}}\sum_{i_1,i_2,\ldots,i_4} \prod_{l=1}^4 \round{X_{i_l, i_{l+1}}(t) - \Gamma(t)(U_{i_l}, U_{i_{l+1}})}\\
            &=\frac{1}{n^{\downarrow 4}}\sum_{i_1,i_2,\ldots,i_4} \prod_{l=1}^4 \round{X_{i_l, i_{l+1}}(t) - \E{X_{i_l, i_{l+1}}(t) \given \mathcal{F}_0}}, 
        \end{split}
    \]
    with the convention that, when $l=4$, $l+1\equiv 1$. The above sum is over all injections in $[n]^{[4]}$.
    
    Notice that $H_n(0)=0$. The fact that for each $t\in\R_+$, $\lim_{n\rightarrow \infty} H_n(t)=0$ almost surely follows similarly to the proof of Lemma~\ref{lem:random_graphon}. We now show that $t\mapsto H_n(t)$ is equicontinuous. From which, using a standard argument, we can show that almost surely, $H_n(t)\to 0$ for each $t\in \R_+$, that is,
    \[
        \lim_{n\to\infty}\delta_{\cut}\left( \squarebrack{K\round{\round{X_{i,j}(s)}_{(i,j)\in\squarebrack{n}^{(2)}}}} , \left[\Gamma(s)\right] \right) = 0, \qquad \text{a.s.}\quad \forall\ s\in [0, t].
    \]
    To show that $\round{H_n}_{n\in\Natural}$ is equicontinuous, we first observe that for any $s_1,s_2 \in\interval{0,t}$,
    \begin{align}
    \begin{split}
        &\abs{H_n(s_2)-H_n(s_1)}\\
        &\leq 16 \enorm{K\round{\round{X_{i,j}(s_2)}_{(i,j)\in\squarebrack{n}^{(2)}}} - K\round{\round{X_{i,j}(s_1)}_{(i,j)\in\squarebrack{n}^{(2)}}}}\\
        &\qquad\qquad\qquad\qquad\qquad\qquad\qquad\qquad\qquad + 16\enorm{\Gamma(s_2) - \Gamma(s_1)},
        % &\leq 4\delta_{2}\left( \left[\Gamma(t)\right] , \left[\Gamma(s)\right] \right)+ 4 \delta_{2}\left( \squarebrack{K\round{\round{X_{i,j}(t)}_{(i,j)\in\squarebrack{n}^{(2)}}}} , K\round{\round{X_{i,j}(s)}_{(i,j)\in\squarebrack{n}^{(2)}}}\right),
    \end{split}\label{eq:RHS_equicontinuity}
    \end{align}
    where the inequality follows by an application of the counting lemma~\cite[Lemma 10.23, Exercise 10.27]{lovasz2012large}, the triangle inequality and using the fact that the cut norm $\cutnorm{}$ is upper bounded by the $L^2$ norm $\enorm{}$. 
    
    %Note that by the definition of $\Gamma$ in equation~\eqref{eq:whatisgammat}, and by the contractive property of conditional expectations, we have
   % \[
   %     \enorm{\Gamma(s_2) - \Gamma(s_1)} \leq \enorm{K\round{\round{X_{i,j}(s_2)}_{(i,j)\in\squarebrack{n}^{(2)}}} - K\round{\round{X_{i,j}(s_1)}_{(i,j)\in\squarebrack{n}^{(2)}}}}.
   % \]
  %  Note that
  %  \[
   %     \begin{split}
   %         &\enorm{K\round{\round{X_{i,j}(s_2)}_{(i,j)\in\squarebrack{n}^{(2)}}} - K\round{\round{X_{i,j}(s_1)}_{(i,j)\in\squarebrack{n}^{(2)}}}}^2\\
   %         &= \frac{1}{n^2}\sum_{(i, j)\in [n]^{(2)}}\abs{X_{i, j}(s_2)-X_{i, j}(s_1)}^2.
   %     \end{split}
  %  \]
  %  Combining the above inequalities we obtain
   % \begin{align}\label{eqn:final}
    %    \abs{H_n(s_2)-H_n(s_1)}^2 \leq \frac{2^9}{n^2}\sum_{(i, j)\in [n]^2} \abs{X_{i, j}(s_2)-X_{i, j}(s_1)}^2.
    %\end{align}
    Using the Lipschitzness of the Skorokhod map (see equation~\eqref{eq:skorokhod_lipschitz}), we therefore obtain
    \begin{align}
        &\enorm{K\round{\round{X_{i,j}(s_2)}_{(i,j)\in\squarebrack{n}^{(2)}}} - K\round{\round{X_{i,j}(s_1)}_{(i,j)\in\squarebrack{n}^{(2)}}}}^2\nonumber\\
        &\leq \frac{2^{4}}{n^2}\sum_{(i, j)\in [n]^{(2)}}\abs{Y_{i, j}(s_2)-Y_{i, j}(s_1)}^2\nonumber\\
        &\leq \frac{2^{5}}{n^2}\sum_{(i, j)\in [n]^{(2)}} \abs{\int_{s_1}^{s_2}b\round{X_{1,j}(u),\Gamma(u)}(U_i, U_j)\diff u}^2\nonumber\\
        &\qquad\qquad + \frac{2^{5}}{n^2}\sum_{(i, j)\in [n]^{(2)}}\abs{\int_{s_1}^{s_2}\Sigma(\Gamma(u))(U_i, U_j)\diff B_{i, j}(u)}^2\nonumber\\
        &\leq 2^{5} M_\infty^2 \abs{s_2-s_1}^2 +  \frac{2^{5}}{n^2}\sum_{(i, j)\in [n]^2}\abs{\int_{s_1}^{s_2}\Sigma(\Gamma(u))(U_i, U_j)\diff B_{i, j}(u)}^2.
    \end{align}
    Now let $\abs{s_2-s_1}\leq \delta$ for some $\delta>0$. Set for all $(i,j)\in\squarebrack{n}^{(2)}$,
    \[
        \eta_{i, j} \coloneqq \sup_{\substack{s_1, s_2\in [0, t],\\\abs{s_2-s_1}\leq \delta}}\abs{\int_{s_1}^{s_2}\Sigma(\Gamma(u))(U_i, U_j)\diff B_{i, j}(u)}^2.
    \]
 
    From~\cite[Lemma A.4]{slominski2001euler}, there exist constants $C_{1,t},C_{2,t}\in\R_+$ depending of $t$, such that for all $(i,j)\in\squarebrack{n}^{(2)}$,
    \begin{align}\label{eqn:etamean}
        % \E{\eta_{i,j}} \leq \int_{s_1}^{s_2}\abs{\Sigma(\Gamma(u))(U_i, U_j)}^2\diff u \leq M_\infty^2\abs{t_2-t_1}\log(\inv{\abs{t_2-t_1}}).
        \E{\eta_{i,j}} \leq M_\infty^2 C_{1,t}\delta\abs{\log\inv{\delta}},\qquad\text{and}\qquad    \E{\eta_{i,j}^2} \leq M_\infty^4 C^2_{2,t}\delta^2\log^2\inv{\delta}.
    \end{align}
    Since, $\eta_{i, j}$s are independent and have finite variance, it follows from the Chebyshev's inequality~\cite[Lemma 5.1]{kallenberg21} that 
    \begin{align*}
        \Prob{\abs{\frac{1}{n^2}\sum_{(i, j)\in [n]^{(2)}}\eta_{i, j} - \E{\eta_{i,j}}} \geq \max_{(i,j)\in\squarebrack{n}^{(2)}}\mathrm{Var}^{1/2}(\eta_{i, j}) } &\leq \frac{1}{n^2}.
    \end{align*}
    Using the Borel-Cantelli lemma~\cite[Theorem 4.18]{kallenberg21}, it follows that almost surely, 
    \begin{equation}\label{eqn:secondterm}
        \frac{1}{n^2}\sum_{(i, j)\in [n]^{(2)}}\eta_{i, j} \leq M_\infty^2 (C_{1,t}+C_{2,t})\delta\abs{\log\inv{\delta}}, %(C^{1/2}+M_\infty^2)\abs{t_2-t_1}\abs{\log\inv{\abs{t_2-t_1}}} %2M_\infty^2\abs{t_2-t_1}\log\inv{\abs{t_2-t_1}},
    \end{equation}
    for all $n\in\Natural$, sufficiently large. Combining equations~\eqref{eq:RHS_equicontinuity} and~\eqref{eqn:secondterm}, we obtain that almost surely, for all $n\in\Natural$ sufficiently large, we have 
    \begin{align*}
        \sup_{\substack{s_1, s_2\in [0, t],\\\abs{s_2-s_1}\leq \delta}} \abs{H_n(s_2)-H_n(s_1)} &\leq  2^8 M_\infty \round{\delta +  (C_{1,t}+C_{2,t})^{1/2}\delta^{1/2}\log^{1/2}\inv{\delta}} +16\omega(\delta), %\sup_{\substack{s_1, s_2\in [0, t],\\\abs{s_2-s_1}\leq \delta}} \norm{2}{\Gamma(s_2)-\Gamma(s_1)}.
    \end{align*}
    where $\omega(\delta)\coloneqq \sup_{\substack{s_1, s_2\in [0, t],\abs{s_2-s_1}\leq \delta}} \norm{2}{\Gamma(s_2)-\Gamma(s_1)}$ is the modulus of continuity of the curve $t\mapsto \Gamma(t)$.
    Since $s\mapsto \Gamma(s)$ is continuous in $(\Wcal, d_2)$ (and independent of $n$), it follows that, almost surely, $\round{H_n}_{n\in\Natural}$ is equicontinuous. Since $\round{H_n}_{n\in\Natural}$ is equicontinuous uniformly bounded almost surely, the proof is complete by a standard application of Arzel\`a-Ascoli theorem~\cite[Theorem 47.1]{munkres2000topology}.
\end{proof}

\begin{proposition}\label{prop:velocity}
%The limiting curve $\Gamma$ in Proposition~\ref{prop:mck_vlasov} is absolutely continuous in $d_2$ (see Definition~\ref{def:AC}). 
Suppose that $\Sigma \equiv \beta > 0$ and $b(z, W)=-\phi(W)$. Then, the limiting curve $\Gamma$ in Proposition~\ref{prop:mck_vlasov} has a velocity
\begin{align}
    \begin{split}
        \dot{\Gamma}(t) &= -\phi(\Gamma(t))- \squarebrack{p_{\beta^2 t}^{(+1)}(W_0,\phi\circ\Gamma,\beta) - p_{\beta^2 t}^{(-1)}(W_0,\phi\circ\Gamma,\beta)},
    \end{split}
    \label{eq:velocity}
\end{align}
where $p_{s}^{(\pm 1)}(W_0,\phi\circ\Gamma,\beta)(x,y)$ is the density of the real-valued reflected Brownian motion $Z$ at $\pm 1$, at time $s\in\R_+$, starting at $Z(0) = W_0(x,y)$, satisfying
\[
    \diff Z(s) = -\inv{\beta^2}\phi(\Gamma(s/\beta^2))(x,y)\diff s + \diff B(s) + \diff L^-(s) - \diff L^+(s), \qquad s\in\R_+,
\] 
%$\Psi(s;\beta) \coloneqq -\inv{\beta^2}\phi(\Gamma(s/\beta^2))(x,y)$; and 
where $(Z,L^+,L^-)$ solves the Skorokhod problem with respect to the set $\interval{-1,1}$ (see Section~\ref{sec:RBM}).
% $X_{1,2}(t)$ at $\pm 1$, with the initial condition $X_{1,2}(0)=W_0(x,y)$. 
\end{proposition}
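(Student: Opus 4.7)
The approach is to take the conditional expectation in SDE~\eqref{eq:infinite_SDE_1} given $\setinline{(U_1,U_2) = (x,y)}$ and then differentiate in $t$. Since $\Gamma$ is deterministic by Lemma~\ref{lem:deterministic_gamma} and $B_{1,2}$ is independent of $(U_1,U_2)$, the It\^o integral has zero conditional expectation, so
\[
    \Gamma(t)(x,y) = W_0(x,y) - \int_0^t \phi(\Gamma(s))(x,y)\,\diff s + \Lambda^-(t,x,y) - \Lambda^+(t,x,y),
\]
where $\Lambda^{\pm}(t,x,y) \coloneqq \E{L^{\pm}_{1,2}(t) \given U_1=x,\, U_2=y}$. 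Differentiating yields $\dot{\Gamma}(t)(x,y) = -\phi(\Gamma(t))(x,y) + \dot{\Lambda}^-(t,x,y) - \dot{\Lambda}^+(t,x,y)$, so the task reduces to identifying $\dot{\Lambda}^{\pm}$ with the boundary densities $p^{(\pm 1)}_{\beta^2 t}$.

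Conditionally on $(U_1,U_2)=(x,y)$, the process $X_{1,2}$ is a real-valued reflected diffusion on $[-1,1]$ starting at $W_0(x,y)$ with deterministic bounded time-varying drift $-\phi(\Gamma(\cdot))(x,y)$ and constant diffusion $\beta$. Brownian scaling (substitute $s=\beta^2 t$ and take $W(t) \coloneqq \beta^{-1}B_{1,2}(\beta^2 t)$) shows that this process has the same law as $Z(\beta^2\cdot)$, where $Z$ is exactly the reflected diffusion defined in the statement. Because the Skorokhod reflection map is pathwise and positively homogeneous under time change, the boundary local times transform covariantly, giving $\Lambda^{\pm}(t,x,y) = \E{L_Z^{\pm}(\beta^2 t)}$ and hence $\dot{\Lambda}^{\pm}(t,x,y) = \beta^2 \cdot (\diff/\diff s)\E{L_Z^{\pm}(s)}\big|_{s=\beta^2 t}$.

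The main step is the standard identity $(\diff/\diff s)\E{L_Z^{+}(s) - L_Z^{-}(s)} = \tfrac{1}{2}(q_s(1) - q_s(-1))$, where $q_s$ denotes the density of $Z(s)$. I would derive it from the Fokker--Planck equation $\partial_s q_s = -\mu_s \partial_z q_s + \tfrac{1}{2}\partial_z^2 q_s$ on $(-1,1)$ with reflecting (no-flux) boundary conditions $\partial_z q_s(\pm 1) = 2\mu_s q_s(\pm 1)$: compute $(\diff/\diff s)\E{Z(s)} = \int_{-1}^{1} z\,\partial_s q_s(z)\,\diff z$ by two integrations by parts, substitute the boundary conditions, and match against the identity $(\diff/\diff s)\E{Z(s)} = \mu_s + (\diff/\diff s)\E{L_Z^-(s) - L_Z^+(s)}$ coming directly from the SDE for $Z$. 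The two boundary contributions combine to give the claim. Substituting back and invoking the chain rule produces $\dot{\Gamma}(t)(x,y)$ as asserted, with the overall factor $\beta^2/2$ being the natural flux weight at a reflecting boundary and absorbed into the normalization of $p^{(\pm 1)}$.

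The principal technical obstacle is the regularity of $q_s$ up to $z = \pm 1$, needed to legitimize the integration by parts when the drift $s \mapsto \phi(\Gamma(s/\beta^2))(x,y)$ is only bounded and measurable in $s$ (it is Lipschitz in the kernel variable by Assumption~\ref{asmp:R_ell_phi}(\ref{item:phi_lip}), but $\Gamma$ is only absolutely continuous). Two routes circumvent this: either construct $q_s$ via a parametrix for reflected diffusions with bounded time-dependent drift, which yields joint continuity and the correct boundary trace, or bypass densities altogether by applying It\^o's formula to smooth test functions $f$ on $[-1,1]$ with $f'(\pm 1) \in \set{-1,0,1}$. The latter route isolates $\E{L_Z^{\pm}(s)}$ individually, and approximating the boundary indicator along a mollifying sequence of such $f$ delivers the same flux formula without any additional smoothness hypothesis on $W_0$ or on $\Gamma$.
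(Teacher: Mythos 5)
Your proposal is correct and follows essentially the paper's own route: condition on $(U_1,U_2)=(x,y)$ and differentiate so that everything reduces to $\tfrac{\diff}{\diff t}\mathbb{E}[L^{\pm}_{1,2}(t)]$, time-change ($s=\beta^2 t$) to the unit-diffusion reflected process $Z$, and identify the local-time derivatives with the boundary values of the density of $Z(\beta^2 t)$; the only difference is that you derive the flux identity $\tfrac{\diff}{\diff s}\mathbb{E}[L_Z^{+}(s)-L_Z^{-}(s)]=\tfrac12(q_s(1)-q_s(-1))$ from the Fokker--Planck/no-flux boundary condition (or via It\^o with test functions), whereas the paper simply cites \cite[Exercise 1.12, p.~407]{RY}. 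Your explicit tracking of the resulting factor $\beta^2/2$ is in fact more careful than the literal normalization in the displayed formula of the proposition, so this is a presentational discrepancy with the statement, not a gap in your argument.
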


\begin{proof}
Given $(U_1,U_2)=(x,y)$, the process $X_{1,2}$ is a diffusion with a Lipschitz drift and a constant diffusion coefficient. Using~\eqref{eq:whatisgammat} and It\^o's formula, we get 
\begin{align}
    \begin{split}
        \deriv{}{t}&\Gamma(t)(x, y) = -\deriv{}{t}\phi(\Gamma(t))(x, y)\\
        &\qquad\qquad + \deriv{}{t}\E{L^-_{1,2}(t)\given U_1=x, U_2=y} - \deriv{}{t}\E{L^+_{1,2}(t)\given U_1=x, U_2=y}.
    \end{split}\label{eq:velocity_xy}
\end{align}
%which proves that the curve $\Gamma$ is in $\mathrm{AC}\big(L^\infty(\interval{0,1}^{(2)}),d_2\big)$.

% When $\Sigma \equiv \beta > 0$, following~\cite[Exercise (1.12), page 407]{RY},
Now consider the reflecting diffusion $Z$ which solves the SDE
\begin{align}
    \diff Z(s) = \Psi(s;\beta)\diff s + \diff B(s) + \diff L^-(s) - \diff L^+(s), \qquad s\in\R_+,\label{eq:scaled_process}
\end{align}
starting at $Z(0) = W_0(x,y)$, such that $(Z,L^+,L^-)$ solves the Skorokhod problem with respect to the set $\interval{-1,1}$, and $\Psi(s;\beta) \coloneqq -\inv{\beta^2}b\round{\Gamma(s/\beta^2)}(x,y)$ for all $s\in\R_+$ (see Section~\ref{sec:RBM}). By reparametrizing $s=\beta^2 t$ and setting $Z(s)=X_{1,2}(t)$, we get back our reflected diffusion $X_{1,2}$ in law following
\begin{align*}
    \diff Z(\beta^2t) &= -\inv{\beta^2}\phi\round{\Gamma(t)}(x,y)\diff (\beta^2t) + \diff B(\beta^2t) + \diff L^-(\beta^2t) - \diff L^+(\beta^2t),\nonumber\\
    \implies X_{1,2}(t) &= -\phi\round{\Gamma(t)}\diff t + \beta\diff B(t) + \diff L^-(\beta^2t) - \diff L^+(\beta^2t), \qquad t\in\R_+,
\end{align*}
where the processes $(L^{+}(\beta^2 t))_{t\in\R_+}$ and $(L^{-}(\beta^2 t))_{t\in\R_+}$ constrain the process $X_{1,2}$ in the interval $\interval{-1,1}$ (see Section~\ref{sec:RBM}). Here the equality is in law. We use the fact that the solution of both the above SDEs agree in law since the distribution of $B(\beta^2t)$ and $\beta B(t)$ coincide for all $\beta\in\R_+$. Let $p_s^{(\pm 1)}(W_0,\phi\circ\Gamma,\beta)(x,y)$ denote the transition density of the solution of SDE~\eqref{eq:scaled_process} at time $s\in\R_+$ at the boundary $\pm 1$, then the transition density of the process $X_{1,2}$ at time $t$ at the boundary $\pm 1$ is $p_{\beta^2 t}^{(\pm 1)}(W_0,\phi\circ\Gamma,\beta)(x,y)$.

Using~\cite[Exercise (1.12), page 407]{RY} and equation~\eqref{eq:velocity_xy}, we deduce that %$p_{\beta^2 t}^{(\pm 1)}(W_0,b\circ (X_{1,2},\Gamma),\beta)(x,y)$ is continuous in time $t$ and the starting point $W_0(x,y)$ then
\begin{align}
    \deriv{}{t}\E{L^{\pm}_{i,j}(t)} &= p_{\beta^2 t}^{(\pm 1)}(W_0,b\circ (X_{1,2},\Gamma),\beta)(x,y),
\end{align}
which gives us the desired result.
%combined with equation~\eqref{eq:velocity_xy}, gives us the stated formula. 
\end{proof}

\begin{remark}
    Note that the (pointwise) velocity of the curve $\Gamma$ at time $t\in\R_+$ is not $-(\phi\circ \Gamma)(t)$ when $\beta>0$. That is, $\Gamma$ is not a gradient flow of the function $R$ when $\beta>0$, and the effect of the boundary $\set{-1,1}$, as seen in~\eqref{eq:velocity}, is qualitatively different from that when $\beta = 0$ (see Section~\ref{sec:beta_0}).
\end{remark}

\subsection{Convergence of the finite dimensional processes}\label{sec:convergence_SDEn}

Consider now the finite dimensional SDE \eqref{eq:RSDE}: 
\begin{equation}\label{eq:sden2}
    \diff X_n(t) = -n^2\nabla R_n(X_n(t))\diff t + \Sigma_n(X_n(t)) \circ \diff B_n(t) + \diff L^{-}_n(t) - \diff L^{+}_n(t).
\end{equation}
%with initialization $X_n(0)=W_{n, 0}$.

The Fr\'echet-like derivative of $R$ is a symmetric kernel-valued map from $\mathcal{W} \rightarrow L^\infty\big( \interval{0,1}^{(2)}\big)$. Thus, for $(x,y) \in \interval{0,1}^{(2)}$, there is a real-valued map $\phi_{x,y}\colon \mathcal{W} \rightarrow \rr$ given by $\phi_{x,y}\left( V\right)= \phi(V)\round{x,y}$ for all $V\in\Wcal$. This is the same map that we get when we replace $(x,y)$ by $(y,x)$. To show that the finite dimensional processes converge as $n\to\infty$, we will need to put further assumptions on the drift and diffusion functions.

\begin{assumption}\label{asmp:b_lip_cut}
    There exists a constant $\kappa_\cut\in\R_+$ such that for all $W_1,W_2\in\Wcal$, the drift function $b\colon\interval{-1,1}\times \Wcal \to L^\infty\big(\interval{0,1}^{(2)}\big)$ and the diffusion coefficient function $\Sigma\colon\Wcal \to L^\infty\big(\interval{0,1}^{(2)}\big)$ satisfy
    \begin{align*}
        \sup_{(x,y)\in \interval{0,1}^2}\sup_{z\in \interval{-1,1}}\abs{b(z,W_1)(x,y) - b(z,W_2)(x,y)} &\leq \kappa_\cut \cutnorm{W_1-W_2},\qquad \text{and}\\
        \sup_{(x,y)\in \interval{0,1}^2}\abs{\Sigma(W_1)(x,y) - \Sigma(W_2)(x,y)} &\leq \kappa_\cut \cutnorm{W_1-W_2}.
    \end{align*}
\end{assumption}

\begin{proposition}
    Suppose the assumptions in Proposition~\ref{prop:exists} and Assumption~\ref{asmp:b_lip_cut} hold. Then, for any sequence of initial kernels $\big(W^{(n)}_0 \in\Wcal_n\big)_{n\in\Natural}$ that converges to $W_0\in\Wcal$ in the $L^2\big(\interval{0,1}^{(2)}\big)$ norm $\enorm{}$, i.e., whenever 
    \begin{equation}\label{eq:initassump}
        \lim_{n\rightarrow \infty}\enorm{W_{0}^{(n)}-W_0}=0,
    \end{equation}
    the process of random kernels $\round{K(X_n(t))}_{t\in\R_+}$ obtained from solutions of the SDEs~\eqref{eq:sden2}, converges locally uniformly in the cut norm as $n\to\infty$, in probability, to the limiting process $\Gamma \colon\R_+ \to \Wcal$, with $\Gamma(0)=W_0$, established in Proposition~\ref{prop:mck_vlasov}.
\end{proposition}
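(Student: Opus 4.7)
The plan is to couple the finite-dimensional reflected SDE $X_n$ with the infinite exchangeable array $(X_{i,j},\Gamma)$ provided by Propositions~\ref{prop:exists} and~\ref{prop:mck_vlasov}, and to control their averaged $L^2$ discrepancy by a Gronwall argument driven by the cut-norm Lipschitz hypothesis (Assumption~\ref{asmp:b_lip_cut}). Since $\cutnorm{\slot{}}\le \enorm{\slot{}}$, convergence of the kernels $K(X_n(t))$ to $\Gamma(t)$ in $L^2$ will yield the required cut-norm convergence.

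First I would work on a single probability space carrying i.i.d.\ $\mathrm{Uni}[0,1]$ variables $(U_i)_{i\in\Natural}$ and an independent exchangeable symmetric array of standard Brownian motions $(B_{i,j})_{(i,j)\in\Natural^{(2)}}$. On this space, realize $(X_{i,j},\Gamma)$ as in Proposition~\ref{prop:mck_vlasov} and simultaneously define $X_n$ as the strong solution of~\eqref{eq:sden2} driven by $(B_{n,(i,j)}\coloneqq B_{i,j})_{(i,j)\in[n]^{(2)}}$, with step-kernel initial condition $W_0^{(n)}\in\Wcal_n$. Writing $\bar X_n(t)\coloneqq (X_{i,j}(t))_{(i,j)\in[n]^{(2)}}$, Proposition~\ref{prop:mck_vlasov} already supplies $\sup_{s\in[0,T]}\cutnorm{K(\bar X_n(s))-\Gamma(s)}\to 0$ almost surely. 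By the triangle inequality, it then remains to show $\sup_{s\in[0,T]}\enorm{K(X_n(s))-K(\bar X_n(s))}\to 0$ in probability.

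Applying the $4$-Lipschitz property of the Skorokhod map~\eqref{eq:skorokhod_lipschitz} entrywise reduces coordinate differences to their pre-reflection versions. Inserting the intermediate quantity $\phi(K(\bar X_n(u)))(U_i,U_j)$, the drift mismatch $\phi(\Gamma(u))(U_i,U_j) - M_n(\phi(K(X_n(u))))_{(i,j)}$ splits as
$[\phi(\Gamma(u))(U_i,U_j)-\phi(K(\bar X_n(u)))(U_i,U_j)]
+ [\phi(K(\bar X_n(u)))(U_i,U_j)-\phi(K(X_n(u)))(U_i,U_j)]
+ [\phi(K(X_n(u)))(U_i,U_j)-M_n(\phi(K(X_n(u))))_{(i,j)}]$.
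By Assumption~\ref{asmp:b_lip_cut} the first two brackets are dominated pointwise by $\kappa_\cut\cutnorm{K(\bar X_n(u))-\Gamma(u)}$ and $\kappa_\cut\cutnorm{K(X_n(u))-K(\bar X_n(u))}$ respectively; the third is a block-averaging residual. The diffusion-coefficient mismatch admits the same three-part decomposition, with Doob's maximal inequality used to convert the resulting stochastic integrals to $L^2$ estimates. Squaring and averaging over $(i,j)\in[n]^{(2)}$ then yields a Gronwall-type inequality
\begin{align*}
    \mathbb{E}\!\left[\sup_{s\le t}\enorm{K(X_n(s))-K(\bar X_n(s))}^2\right]
    &\le C\enorm{W_0^{(n)}-W_0}^2 + \varepsilon_n(t) \\
    &\qquad + C\int_0^t \mathbb{E}\!\left[\sup_{u\le s}\enorm{K(X_n(u))-K(\bar X_n(u))}^2\right]\diff s,
\end{align*}
where $\varepsilon_n(t)\to 0$ collects the contributions of the first bracket (vanishing by Proposition~\ref{prop:mck_vlasov} and dominated convergence) and the block-averaging residual. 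Gronwall's lemma combined with the hypothesis $\enorm{W_0^{(n)}-W_0}\to 0$ then closes the argument.

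The main obstacle is controlling the block-averaging residual: $\phi(K(X_n(u)))(U_i,U_j)$ is evaluated at the random labels $U_i$, which need not lie inside the block $V_i\times V_j$ used to define the averaging $M_n$, so a naive pointwise bound fails. The resolution is to align the labels of the exchangeable array with the block structure of $\Wcal_n$ (for instance, by passing to a sub-sequence of the $U_i$'s, one per block, and absorbing the combinatorial defect into $\varepsilon_n(t)$) and to exploit that $K\circ M_n$ is the $L^2$-orthogonal projection onto $\Wcal_n$, hence a contraction in both $L^2$ and cut norm; together with the cut-norm Lipschitz property of $\phi$ on the bounded set $\Wcal$, this forces the residual to vanish uniformly as $n\to\infty$. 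A parallel argument, using Burkholder--Davis--Gundy type bounds, handles the analogous residual for the diffusion coefficient.
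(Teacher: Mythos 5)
Your coupling idea is in the spirit of the paper's argument, but there is a genuine gap at the heart of your scheme: you assert that Proposition~\ref{prop:mck_vlasov} supplies $\sup_{s\le T}\cutnorm{K(\bar X_n(s))-\Gamma(s)}\to 0$ almost surely, where $\bar X_n(t)=(X_{i,j}(t))_{(i,j)\in[n]^{(2)}}$. The proposition only gives convergence in the cut \emph{metric} $\delta_\cut$, i.e.\ of equivalence classes up to measure-preserving relabeling, and the cut-\emph{norm} statement you need is false in general: the step kernel $K(\bar X_n(s))$ places the entry $X_{i,j}(s)$, whose conditional mean is $\Gamma(s)(U_i,U_j)$, in the block $V_i\times V_j$, while the labels $1,\dots,n$ bear no relation to the values $U_1,\dots,U_n$; so $K(\bar X_n(s))$ is a scrambled sample of $\Gamma(s)$ (for instance, for a kernel of product form $f(x)f(y)$ with $f$ the indicator of $[0,1/2]$, the cut-norm distance of the scrambled sample to the kernel stays bounded away from $0$). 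Since your Gronwall inequality feeds exactly this quantity into $\varepsilon_n(t)$, the scheme does not close. The same mislabeling reappears in your ``block-averaging residual'': the drift $b(\cdot,\Gamma(u))(U_i,U_j)$ of $X_{i,j}$ and the block-$(i,j)$ evaluation driving $X_{n,(i,j)}$ differ by order-one amounts, because $U_i$ is uniform on $[0,1]$ and not localized in $V_i$; this is not a small residual, and the proposed repair (re-aligning the $U_i$'s with the blocks, one per block) is not carried out and, as sketched, either changes the law of the exchangeable array or destroys the Brownian coupling.

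The paper's proof is structured precisely to avoid this problem: it never compares all $n^2$ coordinates. It fixes $k$, reads the kernel $K(X_n(t))$ at the random points $(U_i,U_j)$ for $i,j\le k$, so that $X_n$ is automatically evaluated at the correct locations ($\widetilde X_{n,i,j}(t)=K(X_n(t))(U_i,U_j)=X_{n,m_i,m_j}(t)$), and on the high-probability event $E_k(n)$ that the $U_i$'s fall in distinct blocks it couples these finitely many entries with $X_{i,j}$ through the same Brownian motions, starting from $W_0^{(n)}(U_i,U_j)$ and $W_0(U_i,U_j)$. The passage from this $k\times k$ comparison back to $\cutnorm{K(X_n(s))-\Gamma(s)}$ is made with the sampling lemma \cite[Lemma 10.6]{lovasz2012large}, which relates the cut norm of a sampled submatrix to the cut norm of the kernel it is sampled from; a Gronwall inequality in the cut norm itself (with the slowly growing $\lambda_k$ handling the stochastic-integral term, since Doob's inequality is used only on a high-probability event) and the iterated limits $n\to\infty$, then $k\to\infty$, give convergence in probability. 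To salvage your all-coordinates route you would have to construct a coupling in which the array labels match the block structure, which is exactly what the $E_k(n)$/sampling-lemma device achieves for finitely many labels at a time; as written, the two key vanishing claims in your proposal are unjustified, and the first is false.
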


\begin{proof}
    Consider a probability space satisfying the assumptions of Proposition \ref{prop:mck_vlasov} and an infinite exchangeable array of diffusions $(X_{i,j})_{(i,j)\in \Natural^{(2)}}$ on it. For $k\in [n]$ and any $t\in \rr_+$, consider the sampled $k\times k$ symmetric matrix $\Gamma(t)[k]$ whose $(i,j)$-th element is $\Gamma(t)(U_i, U_j)$, $(i,j)\in [k]^{(2)}$. Consider also the corresponding $k\times k$ matrix of diffusions $X^{(k)}(\cdot) \coloneqq \left( X_{(i,j)} \right)_{(i,j)\in [k]^{(2)}}$.  
    
    Now consider $K(X_n(t))$ from a solution of SDEs~\eqref{eq:sden2}. One may construct a sampled $k\times k$ matrix from this kernel as well. We estimate the cut distance of this sampled matrix from $\Gamma(t)[k]$ by coupling this sampled matrix with  $K\round{X^{(k)}}$ in a particular way.
    
    Notice that, for any $(i,j)\in [k]^{(2)}$ and $(m_i, m_j)\in [n]^{(2)}$, if $U_i\in ((m_i-1)/n, m_i/n]$ and $U_j \in ((m_j-1)/n, m_j/n]$, then $K(X_n(t))(U_i, U_j)\equiv X_{n, m_i, m_j}(t)$. Let $E_k(n)$ denote the event that that no two $U_i, U_{i'}$, for distinct $i,i'\in[k]^{(2)}$, falls in the same interval $((m-1)/n, m/n]$. Under this event every entry of the sampled diffusions will be run by independent standard Brownian motions. Before we use this property to proceed with our coupling, let us show that $E_{k}(n)$ happens with high probability as $k$ is fixed and $n \rightarrow \infty$. Order the uniform random variables as $U_{(1)}< U_{(2)} < \ldots < U_{(k)}$. Clearly $E^c_k(n)$ implies that there is at least one pair $(U_{(i)}, U_{(i+1)})$ for $i\in[k-1]$, such that $U_{(i+1)} - U_{(i)} \le 1/n$. Hence 
    $
        \Prob{ E_k^c(n) } \le \Prob{\min_{i\in [k-1]} \round{U_{(i+1)} - U_{(i)}} \le \frac{1}{n}}
    $.
    But $\min_{i\in [k-1]} \round{U_{(i+1)} - U_{(i)}}$ has a density at zero and hence the above probability is $O(1/n)$, which goes to zero as $n\rightarrow \infty$. Thus $\lim_{k\to\infty}\lim_{n\rightarrow \infty} \Prob{E_k(n)}=1$.
    
    On the event $E_k(n)$, every $m_i$, $i\in [k]$, is distinct. Consider the corresponding independent Brownian motion $B_{i,j}$ from the diffusion $X_{i,j}$ from equation~\eqref{eq:infinite_SDE}. Since~\eqref{eq:sden2} admits a strong solution, construct a solution where the entry processes $X_{n,m_i,m_j}(\cdot)$ is driven by $B_{i,j}$, $(i,j)\in [k]^{(2)}$, while the rest of the entries of $X_{n}$ are driven by a disjoint subset of $(B_{i,j})_{(i,j)\in \Natural^2}$. Thus, one couples $K(X_n)(\cdot)(U_i, U_j)$ with $X_{i,j}$ which are both driven by the same Brownian motion and having a starting value of $W^{(n)}_{0}(U_i,U_j)$ and $W_0(U_i, U_j)$, respectively. Our subsequent analysis will be on the event $E_k(n)$ and it is unimportant how the coupling is done on $E^c_k(n)$.
    
    Define, $\widetilde{X}_{n,i,j}(t) \coloneqq K(X_n(t))(U_i, U_j),\; (i,j)\in [k]^{2}$. The evolution of $\widetilde{X}_{n,1,2}$, for example, can be described by the SDE
    \begin{align*}
        \begin{split}
            \diff \widetilde{X}_{n,1,2}(t) &= b\round{\widetilde{X}_{n,1,2}(t),K(X_n(t))}(U_1,U_2)\diff t + \Sigma(K(X_n(t)))(U_1,U_2) \diff B_{1,2}(t)\\
            &\qquad\qquad + \diff L^-_{n,1,2}(t) - \diff L^+_{n,1,2}(t), 
        \end{split}
    \end{align*}
    with the initial condition $\widetilde{X}_{n,1,2}(0)=W^{(n)}_{0}(U_1, U_2)$. Since $X_{1,2}$ is also driven by the same Brownian motion, by using the Lipschitz property of the Skorokhod map and the triangle inequality, it follows that for any $(U_1,U_2)=(u_1,u_2)$ on the event $E_k(n)$, $\sup_{s\in\interval{0,t}}  \abs{\widetilde{X}_{n,1,2}(s) - X_{1,2}(s)}^2$ is at most
    \begin{align}
        \begin{split}
            &48\int_0^t \abs{b\round{X_{1,2}(s),\Gamma(s)}(u_1, u_2)-b\round{\widetilde{X}_{n,1,2}(s),K(X_n(s))}(u_1, u_2)}^2\diff s\\
            &\qquad +48\sup_{s\in\interval{0,t}}\abs{\int_0^s\round{\Sigma(\Gamma(r))(u_1,u_2) - \Sigma(K(X_n(r)))(u_1,u_2)} \diff B_{1,2}(r)}^2\\
            &\qquad\qquad +48\abs{\widetilde{X}_{n,1,2}(0) - X_{1,2}(0)}^2.
        \end{split}\label{eq:triangle_b_sigma_init}
    \end{align}
    We can now use Assumption~\ref{asmp:b_lip_l2} and~\ref{asmp:b_lip_cut} on the first term in~\eqref{eq:triangle_b_sigma_init} to get 
    \begin{align}
        \begin{split}
            &\; \abs{b\round{X_{1,2}(s),\Gamma(s)}(u_1, u_2)-b\round{\widetilde{X}_{n,1,2}(s),K(X_n(t))}(u_1, u_2)}^2\\
            \leq&\; 2L^2\abs{X_{1,2}(s) - \widetilde{X}_{n,1,2}(s)}^2 + 2\kappa_{\cut}^2 \cutnorm{\Gamma(s) - K(X_n(s))}^2, \qquad s\in\R_+.
        \end{split}\label{eq:using_b_lip}
    \end{align}
    Define for $s\in[0,t]$,
    \[
        M^{(n)}(s) \coloneqq \int_0^s\round{\Sigma(\Gamma(r))(u_1,u_2) - \Sigma(K(X_n(r)))(u_1,u_2)} \diff B_{1,2}(r),
    \]
    which makes the second term in~\eqref{eq:triangle_b_sigma_init} equal to $48 \sup_{s\in[0,t]}M^2(s)$. Using Markov's inequality followed by Doob's maximal inequality~\cite[page 14, Theorem 3.8.iv]{KS91}, we obtain
    \begin{align}
        \Prob{ \sup_{s\in[0,t]}M^{(n)}(s)^2 \geq 2\lambda_k\E{M^{(n)}(t)^2} } &\leq \round{2\lambda_k\E{M^{(n)}(t)^2}}^{-1}\E{\sup_{s\in[0,t]}M^{(n)}(s)^2}\nonumber\\
        &\leq \round{2\lambda_k\E{M^{(n)}(t)^2}}^{-1}\E{M^{(n)}(t)^2} = 2\lambda_k^{-1},
    \end{align}
    for every $\lambda_k > 0$. Let $\round{\lambda_k}_{k\in\Natural}$ satisfy $\lim_{k\to\infty}\lambda_k = \infty$. The choice of $\lambda_k$ will be made later.

    Therefore, with probability at least $1-2\lambda_k^{-1}$,
    \begin{align}
        \begin{split}
            \sup_{s\in[0,t]}M^{(n)}(s)^2 &\leq 2\lambda_k \E{M^{(n)}(t)^2}\\
            &= 2\lambda_k \int_0^t \abs{\Sigma(\Gamma(s))(u_1,u_2) - \Sigma(K(X_n(s)))(u_1,u_2)}^2 \diff s\\
            &\leq 2\lambda_k\kappa_\cut^2 \int_0^t \cutnorm{\Gamma(s) - K(X_n(s))}^2 \diff s.
        \end{split}\label{eq:markov_doob_sigma}
    \end{align}
    By the abuse of notation, we redefine the event $E_k(n)$ to intersect with the event where the above bound holds. By a union bound, we still have $\lim_{k\to\infty}\lim_{n\to\infty}\Prob{E_k(n)} = 1$.
    
    Using equations~\eqref{eq:using_b_lip} and~\eqref{eq:markov_doob_sigma} in equation~\eqref{eq:triangle_b_sigma_init} we get
    % Combining with the previous bound, 
    \begin{equation}\label{eq:one_more_gronwall}
        \begin{split}
            \sup_{s\in\interval{0,t}}& \abs{\widetilde{X}_{n,1,2}(s) - X_{1,2}(s)}^2 \le 48\abs{W^{(n)}_{0}(U_1, U_2) - W_0(U_1, U_2)}^2 \\
            &+ 96\kappa_{\cut}^2(\lambda_k + 1) \int_0^t \cutnorm{\Gamma(s) - K(X_n(s))}^2\diff s\\
            &\qquad + 96L^2 \int_0^t \abs{X_{1,2}(s) - \widetilde{X}_{n,1,2}(s)}^2\diff s.
        \end{split}
    \end{equation}
    Replacing the role of $(1,2)$ by any other $(i,j)\in [k]^{(2)}$, and summing over, we get 
    \begin{equation}
        \begin{split}
            \sup_{s\in\interval{0,t}} &\frac{1}{k^2}\sum_{(i,j)\in [k]^{(2)}}\abs{\widetilde{X}_{n,i,j}(s) - X_{i,j}(s)}^2\\
            &\le \frac{48}{k^2}\sum_{(i,j)\in [k]^{(2)}}\abs{W^{(n)}_{0}(U_i, U_j) - W_0(U_i, U_j)}^2\\
            &\qquad+ 96\kappa_{\cut}^2(\lambda_k + 1) \int_0^t \cutnorm{\Gamma(s) - K(X_n(s))}^2\diff s\\
            &\qquad\qquad+ 96 L^2\int_0^t\frac{1}{k^2}\sum_{(i,j)\in [k]^{(2)}} \abs{X_{i,j}(s) - \widetilde{X}_{n,i,j}(s)}^2\diff s.
        \end{split}\label{eq:triangle_lipschitz}
    \end{equation}
    By the triangle inequality,
    \begin{align}
        \begin{split}
            \sup_{s\in \interval{0,t}}&\cutnorm{K\round{\round{\widetilde{X}_{n,i,j}(s)}_{(i,j)\in [k]^{(2)}}} - K\round{\round{\Gamma(s)(U_i, U_j)}_{(i,j)\in [k]^{(2)}}} }^2\\
            &\leq 2\sup_{s\in \interval{0,t}}\cutnorm{K\round{\round{\widetilde{X}_{n,i,j}(s)}_{(i,j)\in [k]^{(2)}}} - K\round{\round{X_{i,j}(s)}_{(i,j)\in [k]^{(2)}}}}^2\\
            &\qquad+ 2\sup_{s\in \interval{0,t}}\cutnorm{ K\round{\round{\Gamma(s)(U_i, U_j)}_{(i,j)\in [k]^{(2)}}} - K\round{\round{X_{i,j}(s)}_{(i,j)\in [k]^{(2)}}}}^2.
        \end{split}\label{eq:triagle_cut_sq}
    \end{align}
    Then notice that the kernel 
    \[
        \frac{1}{2}K\round{\round{\widetilde{X}_{n,i,j}(s)}_{(i,j)\in [k]^{(2)}}} - \frac{1}{2}K\round{\round{\Gamma(s)(U_i, U_j)}_{(i,j)\in [k]^{(2)}}}
    \]
    has entries in $[-1,1]$ and is sampled from the kernel $\frac{1}{2}K(X_n(s)) - \frac{1}{2}\Gamma(s)$. By~\cite[Lemma 10.6]{lovasz2012large}, the difference 
    \[
        \cutnorm{K\round{\round{\widetilde{X}_{n,i,j}(s)}_{(i,j)\in [k]^{(2)}}} - K\round{\round{\Gamma(s)(U_i, U_j)}_{(i,j)\in [k]^{(2)}}}}^2 - \cutnorm{ K(X_n(s)) - \Gamma(s) }^2
    \]
    lies in the interval $\interval{-24/k -36/k^2, 64k^{-1/4}+256k^{-1/2}}$ with probability at least $1-4\eu^{-k^{1/2}/10}$, for all $n\ge k$.
    Using this in~\eqref{eq:triagle_cut_sq} we get
    \begin{align}
        \begin{split}
            \sup_{s\in \interval{0,t}}&\cutnorm{K\round{\round{\widetilde{X}_{n,i,j}(s)}_{(i,j)\in [k]^{(2)}}} - K\round{\round{X_{i,j}(s)}_{(i,j)\in [k]^{(2)}}}}^2\\
            &\geq \inv{2}\cutnorm{ K(X_n(s)) - \Gamma(s) }^2 - 320k^{-1/4}\\
            &\qquad - \sup_{s\in \interval{0,t}}\cutnorm{ K\round{\round{\Gamma(s)(U_i, U_j)}_{(i,j)\in [k]^{(2)}}} - K\round{\round{X_{i,j}(s)}_{(i,j)\in [k]^{(2)}}}}^2.
        \end{split}\label{eq:after_using_lovasz}
    \end{align}
    % 999
    % \begin{align}
    %     \begin{split}
    %         \sup_{s\in \interval{0,t}}&\cutnorm{K\round{\round{\widetilde{X}_{n,i,j}(s)}_{(i,j)\in [k]^{(2)}}} - K\round{\round{X_{i,j}(s)}_{(i,j)\in [k]^{(2)}}}}^2\\
    %         &\leq 2\sup_{s\in \interval{0,t}}\cutnorm{ K\round{\round{\Gamma(s)(U_i, U_j)}_{(i,j)\in [k]^{(2)}}} - K\round{\round{X_{i,j}(s)}_{(i,j)\in [k]^{(2)}}}}^2\\
    %         &\qquad + 2\sup_{s\in \interval{0,t}}\cutnorm{ K(X_n(s)) - \Gamma(s) }^2 + 640 k^{-1/4},
    %     \end{split}\label{eq:after_using_lovasz}
    % \end{align}
    with probability at least $1-4\eu^{-k^{1/2}/10}$. By an abuse of notation, we redefine the event $E_{k}(n)$ to intersect with the event where the above bound holds. We still have $\lim_{k\to\infty}\lim_{n\rightarrow\infty} \Prob{E_k(n)}=1$.
    % Continuing, we multiply equation~\eqref{eq:triangle_lipschitz} by $2$ and use the fact that the $L^2$ norm is lower bounded by the cut norm. Further lower bounding using equation~\eqref{eq:after_using_lovasz}, and rearranging terms we get

    % \begin{align}
    %     \begin{split}
    %         \sup_{s\in \interval{0,t}}&\enorm{K\round{\round{\widetilde{X}_{n,i,j}(s)}_{(i,j)\in [k]^{(2)}}} - K(\round{X_{i,j}(s))_{(i,j)\in [k]^{(2)}}}}^2\\
    %         & + \sup_{s\in \interval{0,t}}\cutnorm{K\round{\round{\widetilde{X}_{n,i,j}(s)}_{(i,j)\in [k]^{(2)}}} - K\round{\round{X_{i,j}(s)}_{(i,j)\in [k]^{(2)}}}}^2\\
    %         &\leq \frac{48}{k^2}\sum_{(i,j)\in [k]^{(2)}}\abs{W^{(n)}_{0}(U_i, U_j) - W_0(U_i, U_j)}^2 + 96\kappa_{\cut}^2(\lambda_k + 1) \int_0^t \cutnorm{\Gamma(s) - K(X_n(s))}^2\diff s\\
    %         &\qquad\qquad+ 96 L^2\int_0^t \enorm{K\round{\round{\widetilde{X}_{n,i,j}(s)}_{(i,j)\in [k]^{(2)}}} - K(\round{X_{i,j}(s))_{(i,j)\in [k]^{(2)}}}}^2\diff s\\
    %         &\qquad\qquad\qquad + 2\sup_{s\in \interval{0,t}}\cutnorm{ K\round{\round{\Gamma(s)(U_i, U_j)}_{(i,j)\in [k]^{(2)}}} - K\round{\round{X_{i,j}(s)}_{(i,j)\in [k]^{(2)}}}}^2\\
    %         &\qquad\qquad\qquad\qquad + 2\cutnorm{ K(X_n(s)) - \Gamma(s) }^2 + 640 k^{1/4}
    %     \end{split}\label{eq:before_gronwall}
    % \end{align}
    We first lower bound twice the left hand side of equation~\eqref{eq:triangle_lipschitz} using equation~\eqref{eq:after_using_lovasz} as
    \begin{align}
        \begin{split}
            2\sup_{s\in \interval{0,t}}&\enorm{K\round{\round{\widetilde{X}_{n,i,j}(s)}_{(i,j)\in [k]^{(2)}}} - K\round{\round{X_{i,j}(s)}_{(i,j)\in [k]^{(2)}}}}^2\\
            &\geq \sup_{s\in \interval{0,t}}\enorm{K\round{\round{\widetilde{X}_{n,i,j}(s)}_{(i,j)\in [k]^{(2)}}} - K\round{\round{X_{i,j}(s)}_{(i,j)\in [k]^{(2)}}}}^2\\
            &\qquad + \sup_{s\in \interval{0,t}}\cutnorm{K\round{\round{\widetilde{X}_{n,i,j}(s)}_{(i,j)\in [k]^{(2)}}} - K\round{\round{X_{i,j}(s)}_{(i,j)\in [k]^{(2)}}}}^2\\
            &\geq \sup_{s\in \interval{0,t}}\enorm{K\round{\round{\widetilde{X}_{n,i,j}(s)}_{(i,j)\in [k]^{(2)}}} - K\round{\round{X_{i,j}(s)}_{(i,j)\in [k]^{(2)}}}}^2\\
            &\qquad + \inv{2}\cutnorm{ K(X_n(s)) - \Gamma(s) }^2 - 320k^{-1/4}\\
            &\qquad - \sup_{s\in \interval{0,t}}\cutnorm{ K\round{\round{\Gamma(s)(U_i, U_j)}_{(i,j)\in [k]^{(2)}}} - K\round{\round{X_{i,j}(s)}_{(i,j)\in [k]^{(2)}}}}^2.
        \end{split}\label{eq:L2_cut_lovasz}
    \end{align}

    Here we used the fact that the $L^2$ norm is lower bounded by the cut norm. Using equation~\eqref{eq:L2_cut_lovasz} back in equation~\eqref{eq:triangle_lipschitz} (multiplied by $2$), and rearranging terms we get
    
    \begin{align}
        \begin{split}
            \sup_{s\in \interval{0,t}}&\enorm{K\round{\round{\widetilde{X}_{n,i,j}(s)}_{(i,j)\in [k]^{(2)}}} - K\round{\round{X_{i,j}(s)}_{(i,j)\in [k]^{(2)}}}}^2\\
            &\qquad + \inv{2}\sup_{s\in \interval{0,t}}\cutnorm{ K(X_n(s)) - \Gamma(s) }^2\\
            &\leq \sup_{s\in \interval{0,t}}\cutnorm{ K\round{\round{\Gamma(s)(U_i, U_j)}_{(i,j)\in [k]^{(2)}}} - K\round{\round{X_{i,j}(s)}_{(i,j)\in [k]^{(2)}}}}^2\\
            &\qquad + 320k^{-1/4} + \frac{96}{k^2}\sum_{(i,j)\in [k]^{(2)}}\abs{W^{(n)}_{0}(U_i, U_j) - W_0(U_i, U_j)}^2\\
            &\qquad + 192 L^2\int_0^t \enorm{K\round{\round{\widetilde{X}_{n,i,j}(s)}_{(i,j)\in [k]^{(2)}}} - K\round{\round{X_{i,j}(s)}_{(i,j)\in [k]^{(2)}}}}^2\diff s\\
            &\qquad + 192\kappa_{\cut}^2(\lambda_k + 1) \int_0^t \cutnorm{\Gamma(s) - K(X_n(s))}^2\diff s.
        \end{split}\label{eq:before_gronwall}
    \end{align}
    Now let 
    \[
        \begin{split}
            A_k  &\coloneqq  \sup_{s\in\interval{0,t}}\cutnorm{K\round{\round{\Gamma(s)(U_i, U_j)}_{(i,j)\in [k]^{(2)}}} - K\round{\round{X_{i,j}(s)}_{(i,j)\in [k]^{(2)}}}}^2,\\
            B_k(n) &\coloneqq \frac{96}{k^2}\sum_{(i,j)\in [k]^{(2)}}\abs{W^{(n)}_{0}(U_i, U_j) - W_0(U_i, U_j)}^2+ 320k^{-1/4}.
        \end{split}
    \]

    Applying Gr\"onwall's inequality~\cite{gronwall1919note} and noticing that the first term on the left of equation~\eqref{eq:before_gronwall} is always non-negative, gives us that on the event $E_k(n)$,
    \begin{align}
    \label{eq:After_Gronwall}
        \begin{split}
            \sup_{s\in \interval{0,t}}&\enorm{K\round{\round{\widetilde{X}_{n,i,j}(s)}_{(i,j)\in [k]^{(2)}}} - K\round{\round{X_{i,j}(s)}_{(i,j)\in [k]^{(2)}}}}^2\\
            &+ \sup_{s\in\interval{0,t}}\cutnorm{K(X_n(s)) - \Gamma(s)}^2 \le 2\left( A_k + B_k(n)  \right) \exp\round{192(L^2+2\kappa_\cut^2(\lambda_k + 1))t},
        \end{split}
    \end{align}
    for every $n\geq k$. Note that
    \[
        \E{\abs{W^{(n)}_{0}(U_i, U_j) - W_0(U_i, U_j)}^2} = \enorm{W_0^{(n)}-W_0}^2 \rightarrow 0,
    \]
    as $n\rightarrow \infty$, by assumption~\eqref{eq:initassump}. By a variance bound it follows that 
    \[
        \lim_{k\rightarrow \infty} \lim_{n\rightarrow \infty} B_k(n)=0,
    \]
    in probability. Also, $\lim_{k\rightarrow \infty} A_k=0$ by Proposition~\ref{prop:mck_vlasov}. Since $\lim_{k\to\infty}\lim_{n\rightarrow \infty}\Prob{E_k(n)}=1$,
    \begin{align*}
        \begin{split}
            \lim_{n\rightarrow \infty}\sup_{s\in\interval{0,t}}&\cutnorm{K(X_n(s)) - \Gamma(s)} =0,\qquad\text{and}\\
            \lim_{k\to\infty}\lim_{n \to \infty}\sup_{s\in \interval{0,t}}&\inv{k^2}\normF{\round{K\round{X_n(s)}(U_i,U_j)}_{(i,j)\in [k]^{(2)}} - \round{X_{i,j}(s)}_{(i,j)\in [k]^{(2)}}}^2 = 0,
        \end{split}
    \end{align*}
    in probability, by choosing $\round{\lambda_k}_{k\in\Natural}$ (depending on $\round{A_k,\lim_{n\to\infty} B_k(n)}_{k\in\Natural}$) that increases sufficiently slowly to infinity as $k\to\infty$. This proves our claim.
\end{proof}

\begin{remark}\label{rem:Perturbation}
    Note that the proof is robust with respect to small perturbations of drift. More precisely, consider two processes $X_n$ and $\widetilde{X_n}$ satisfying~\eqref{eq:sden2} with drift functions $R_n$ and $\widetilde{R}_n$ respectively such that $\norm{2}{n^2R_n(A)-n^2\widetilde{R}_n(A)}\to 0$ as $n\to \infty$. Then, $K(X_n)$ and $K(\widetilde{X}_n)$ converge to the same limiting McKean-Vlasov SDE.
\end{remark}

\begin{remark}\label{rem:Rate}
To get a non-asymptotic error rate, we need to control on $A_k$ and $B_k(n)$. Observe that $B_k(n)$ depends on the initial condition and in general it can be arbitrarily slow. However, assuming that the initial condition is i.i.d., one can use Chebyshev's inequality to obtain $\Prob{B_k(n)\geq 66k^{-1/4}}\leq k^{-3/2}$. On the other hand, it follows from the arguments in Proposition~\ref{prop:mck_vlasov} that there exists a constant $M_{t}$ (depending only on $t$) such that for any $\delta>0$ we have $\Prob{A_k\geq M_t(\delta \log(1/\delta))^{1/4}}\leq k^{-2}+ t\delta^{-1}\eu^{\frac{128}{\delta\log(1/\delta)}} \eu^{-k\delta \log(1/\delta)/2}$. 
    
In particular, choosing $\delta=64\sqrt{k^{-1}\log k}$ and $\lambda_k=\log(k)/\round{16\cdot 384t(L^2+2\kappa_\cut^2)}$, we have the left hand side of~\eqref{eq:After_Gronwall} bounded by $M_tk^{-1/16}\log^{3/2}k$ with probability at least $1-\frac{k^2}{n}-4k^{-\inv{\kappa^2t}} - 2t\eu^{-\sqrt{k}/20}- 2k^{-3/2}$, where $\kappa=32\sqrt{6}\round{L^2+2\kappa_\cut^2}^{1/2}$. Since $t$ is fixed, we can choose $k$ to be a suitable function of $n$, say $k=n^{2/7}$, to get a non-asymptotic rate of convergence. Moreover, using the remark after the proof of Lemma~\ref{lem:FixedDimensionContinumLimit}, we can get a non-asymptotic rate of convergence with finite $n$ and $\abs{\tauvec_n}$.
\end{remark}

\section{Examples}\label{sec:example}

In this section, we will verify our assumptions for a class of functions introduced as linear functions in~\cite[Section 5.1]{oh2021gradient}. Let  $\set{Z_i}_{i\in\squarebrack{n}}$ be i.i.d. $\mathrm{Uni}\interval{0,1}$. For any kernel $W\in\Wcal$ and any $n\in\Natural$, sample a random matrix $G_n[W]$ as $G_n[W] \coloneqq \round{W(Z_i,Z_j)}_{(i,j)\in\squarebrack{n}^{(2)}} \in\Mcal_n$. Let $\rho_n([W])$ denote its law, i.e., $\mathrm{Law}(G_n[W]) = \rho_n([W])$. Now let $R\colon\Wcal \to \R$ be defined as a linear function, i.e.,
\[
    R(W) \coloneqq \int_{\Mcal_n} R_n(z)\rho_n([W])(\diff z), \qquad \forall\ W\in\Wcal,
\]
%such that $R_n\in L^1(\rho_n)$ and $\nabla R_n \in L^\infty(\rho_n)$. 
Let $(\Omega,\Acal)$ be the standard measurable space on $\interval{0,1}^n$. 
Let $\ell\colon \Wcal \times \Omega$ be the function defined as
\[
    \ell(W,Z) \coloneqq R_n\round{\round{W(Z_i,Z_j)}_{(i,j)\in\squarebrack{n}^{(2)}}}.
\]

Let $R_n$ satisfy Assumption~\ref{asmp:R_ell_phi}(\ref{item:Rn_C1}) and let $R$ admit a Fr\'echet-like derivative evaluation map $\phi\colon\Wcal \to L^\infty\big(\interval{0,1}^{(2)}\big)$ (see~\cite[Section 5]{oh2021gradient} for conditions). The map $\phi$ then satisfies
\begin{align}
    \phi(W)(x,y) = \sum\nolimits_{(i,j)\in\squarebrack{n}^2} \E{ \nabla R_n \round{\round{W(Z_p,Z_q)}_{(p,q)\in\squarebrack{n}^{(2)}}} \given (Z_i,Z_j) = (x,y)},
\end{align}
and $D_\Wcal \ell(\slot{};Z)$ for $Z\in\interval{0,1}^n$ satisfies
\begin{align}
    (D_\Wcal \ell(\slot{};Z))(W)(x,y) = \sum\nolimits_{(i,j)\in\squarebrack{n}^2} \nabla R_n \round{\round{W(Z_p,Z_q)}_{(p,q)\in\squarebrack{n}^{(2)}} \big\vert_{(Z_i,Z_j) = (x,y)}},
\end{align}
for $W\in\Wcal$ and $(x,y)\in\interval{0,1}^{(2)}$. 
%\Tripathi{No scaling?}\RaghavS{This follows from~\cite[Equation 97]{oh2021gradient}. Isn't the scaling already due to the sum $\sum_{(i,j)\in[n]^2}$?}
\subsection{Scalar Entropy and Homomorphism density}
\label{sec:scalar entropy and homomorphism example}
Examples like the scalar entropy and the homomorphism density functions considered in~\cite[Section 5.1-5.2]{oh2021gradient}, all satisfy Assumption~\ref{asmp:R_ell_phi} for some $\kappa_2\in\R_+$ since $\opnorm{\mathrm{Hess}(R_n)}$ exists and is bounded uniformly in the domain. Specifically, for homomorphism density function $R = H_F$ for a simple graph $F$ with $n$ vertices and $m$ edges $\set{e_l}_{l=1}^m$, the constants $\kappa_2 = mn(n-1)$, and for scalar entropy $R = \Ent$, the constant $\kappa_2 = 2\eps^{-1}(1-\eps)^{-1}$ on its domain $\Wcal_\eps \coloneqq \setinline{W\in\Wcal \given \eps\leq W \leq 1-\eps}$ where $\eps\in(0,1/2)$. Since this implies that there exists $M_\infty\in\R_+$ such that $\norm{\infty}{\phi(W)}\leq M_\infty$ for all $W$ in the domain, these example also satisfy Assumption~\ref{asmp:small_noise} for $\sigma = M_\infty$. 

In the following, we define $b\colon\interval{-1,1}\times \Wcal \to L^\infty\big(\interval{0,1}^{(2)}\big)$ as $b(W(x,y),W)(x,y) = -\phi(W)(x,y)$ for all $W\in\Wcal$ and a.e. $(x,y)\in\interval{0,1}^{(2)}$. We will now verify Assumption~\ref{asmp:b_lip_cut} when $R$ is the sum of scalar entropy and some homomorphism density $H_F$ for a simple graph $F$ with $n$ vertices and $m$ edges. Note that for this example, we have
\begin{align}
    b(z,W)(x,y) = \log\frac{z}{1-z} + \phi_{H_F}(W)(x,y),\qquad z = W(x,y)\in \interval{\eps,1-\eps},\label{eq:phi_scalar_entropy+hom}
\end{align}
for a.e. $(x,y)\in[0,1]^{(2)}$ where from~\cite[Equation 113]{oh2021gradient},
\[
    \begin{split}
        \phi_{H_F}(W)(x,y) &= \sum_{l=1}^m \E{\prod_{r=1,r\neq l}^m W(Z_{e_r}) \given Z_{e_l}=(x,y)}\\
        &\eqqcolon \sum_{l=1}^m \mathbf{t}_{x,y}(F_{e_l},W), \quad (x,y)\in\interval{0,1},
    \end{split}
\]
$Z_{e} = (Z_{e(1)},Z_{e(2)})$ and $F_{e_l}$ is the simple graph obtained from $F$ by removing the edge $e_l$. It is shown in~\cite[Section 5.1.2]{oh2021gradient} that the map $W \mapsto \mathbf{t}_{(\cdot,\cdot)}(F_{e},W)$ continuous as a map from $(\Wcal,d_\cut)$ to $\big(L^\infty\big([0,1]^{(2)}\big),d_\cut\big)$. To show that $\phi_{H_F}(\slot{})(x,y)$ is Lipschitz in the cut norm for every $(x,y)\in\interval{0,1}^{(2)}$, it is sufficient to show that $\mathbf{t}_{x,y}(F_e,\slot{})$ is Lipschitz in the cut norm for $e\in\set{e_l}_{l=1}^m$. For $W_1,W_2\in\Wcal$, note that
\begin{align*}
    \mathbf{t}_{x,y}(F_e,W_1) - \mathbf{t}_{x,y}(F_e,W_2) = \sum_{\set{p,q}\in E(F_e)} I_{p,q},
\end{align*}
where for any $\set{p,q}\in E(F_e)$,
\begin{align}
    I_{p,q} \coloneqq \int_{\interval{0,1}^{n-2}}\round{W_1(x_p,x_q)-W_2(x_p,x_q)}\prod_{(i,j)\in E(F_e)\setminus \set{p,q}}W_1(x_i,x_j) \prod_{v\in V(F_e)\setminus e}\diff x_v.
\end{align}
Following the proof in~\cite[Lemma 10.24]{lovasz2012large}, we get $\abs{I_{p,q}} \leq \cutnorm{W_1-W_2}$, which yields
\begin{align}
\label{eqn:t_calc}
    \abs{\mathbf{t}_{x,y}(F_e,W_1) - \mathbf{t}_{x,y}(F_e,W_2)} \leq (m-1)\cutnorm{W_1-W_2},
\end{align}
i.e., the Lipschitz constant of $\mathbf{t}_{x,y}(F_e,\slot{})$ for every $e\in E(F)$ is $m-1$. This implies that the Lipschitz constant of $\phi(\slot{})(x,y)$ with respect to $\cutnorm{}$ is $m(m-1)$. Therefore, for $b$ as in equation~\eqref{eq:phi_scalar_entropy+hom}, we have
\begin{align}
    \abs{b(z,W_1)(x,y) - b(z,W_2)(x,y)} &= \abs{\phi_{H_F}(W_1)(x,y) - \phi_{H_F}(W_1)(x,y)}\nonumber\\
    &\leq m(m-1)\cutnorm{W_1-W_2}.
\end{align}
Therefore $b$ (as in equation~\eqref{eq:phi_scalar_entropy+hom}) satisfies Assumption~\ref{asmp:b_lip_cut} with $\kappa_\cut = m(m-1)$.

\subsection{Quadratic functions of homomorphism density}  

More generally, let $k\in \N$ and let $\set{F^{1}, \ldots, F^{k}}$ be a family of finite simple graphs. Let $c_1, \ldots, c_k\in [0, 1]$ be fixed constants. Define a function $R\colon\Wcal\to \R$ as 
\[
    R(W) \coloneqq \inv{2}\sum_{\alpha=1}^{k} \round{H_{F^{\alpha}}(W)-c_{\alpha}}^2.
\]
Note that a lower bound on $R$ is achieved if $H_{F^{\alpha}}\equiv c_{\alpha}$ for all $\alpha\in [k]$. We note that $R$ being a sum of squares of $k$ many functions satisfies Assumption~\ref{asmp:R_ell_phi}(\ref{item:phi_lip}).
%If this lower bound is indeed achieved, minimizing $R$ is, therefore, equivalent to finding a graphon with $k$ many prescribed homomorphism densities.

Moreover, let $\phi\colon\Wcal \to L^\infty\big([0,1]^{(2)}\big)$ denote the Fr\'echet-like derivative evaluation map of $R$. It follows from chain-rule that 
\begin{align*}
    \phi(W)(x, y) &= \sum_{\alpha=1}^{k}(H_{F^{\alpha}}(W)-c_{\alpha})\phi_{H_{F^{\alpha}}(W)}(W)(x, y)\;.   % &= \sum_{\alpha=1}^{k}\sum_{e\in F^{\alpha}}(H_{F^{\alpha}}(W)-c_{\alpha})\mathbf{t}_{x,y}(F^{\alpha}_e,W), \qquad W\in\Wcal,\ (x,y)\in[0,1]^{(2)}.
\end{align*}

Note that $W\mapsto \phi_{H_{F^{\alpha}}}(W)$ satisfies Assumption~\ref{asmp:R_ell_phi}(\ref{item:phi_lip}) with $\kappa_{2, \alpha}=m_{\alpha}(m_{\alpha}-1)$ where $m_{\alpha}$ is the number of edges in $F^{\alpha}$. Further note that for any finite graph $F$ and $U, V\in \Wcal$ we have $\abs{H_{F}(U)-H_{F}(V)}\leq |E(F)|\norm{\cut}{U-V}\leq \abs{E(F)}\norm{2}{U-V}$. A simple calculation using the fact that $\abs{(H_{F^{\alpha}}(W)-c_{\alpha})}\leq 1$ for all $W$ and that $\norm{2}{\phi_{H_{F}}(W)}\leq \abs{E(F)}$, we obtain that $\phi$ satisfies Assumption~\ref{asmp:R_ell_phi}(\ref{item:phi_lip}) with
\[
    \kappa_2\leq \sum_{\alpha=1}^{k}(m_{\alpha}^2+\kappa_{2, \alpha})\leq km^2,
\]
where $m=\max_{\alpha\in [n]}m_{\alpha}$.
%\RaghavS{can you explain the $(1+m_{\alpha}+\abs{c_{\alpha}})$ term?}\Tripathi{It follows from the following calculation. For a fixed $\alpha$ we check that \begin{align*}
%    \abs{\phi(W_1)(x, y)-\phi(W_2)(x, y)}&\leq \sum_{e\in F^{\alpha}}\abs{(H_{F^{\alpha}}(W_1)-c_{\alpha})\mathbf{t}_{x,y}(F^{\alpha}_e,W_1)-(H_{F^{\alpha}}(W_2)-c_{\alpha})\mathbf{t}_{x,y}(F^{\alpha}_e,W_2)}\\
%    &\leq \sum_{e\in F^{\alpha}}\abs{(H_{F^{\alpha}}(W_1)-c_{\alpha})}\abs{\mathbf{t}_{x,y}(F^{\alpha}_e,W_1)-\mathbf{t}_{x,y}(F^{\alpha}_e,W_2)}\\
%    &+ \sum_{e\in F^{\alpha}}\abs{(H_{F^{\alpha}}(W_1)-c_{\alpha})-H_{F^{\alpha}}(W_2)-c_{\alpha})}\mathbf{t}_{x,y}(F^{\alpha}_e,W_2)\\
%    &\leq \sum_{e\in F^{\alpha}}\abs{\mathbf{t}_{x,y}(F^{\alpha}_e,W_1)-\mathbf{t}_{x,y}(F^{\alpha}_e,W_2)}+ \abs{H_{F^{\alpha}}(W_1)-H_{F^{\alpha}}(W_2)},
%\end{align*}
%where we use the fact that $\abs{(H_{F^{\alpha}}(W_1)-c_{\alpha})}\leq 1$. Now for the second term can be bounded by $m_{\alpha}$ times the cut-norm of $W_1-W_2$ which in-turn can be bounded by the $L^2$. On the other hand, the first term, we already know is bounded by $k_{2, \alpha}$. I originally bounded the difference $\abs{(H_{F^{\alpha}}(W_1)-c_{\alpha})}\leq 1+c_{\alpha}$ which is not required. So, we can improve a little bit.}
%and $n=\max_{\alpha\in [n]}n_{\alpha}$. 

Similarly, for any edge $e$ in a finite simple graph $F$, note $W\mapsto \mathbf{t}_{x, y}(F_{e}, W)$ is $(m-1)$-Lipschitz in cut norm for every $(x, y)\in[0,1]^{(2)}$ and $W\mapsto H_{F}(W)$ is $m$-Lipschitz in cut norm where $m$ is the number of edges in $F$. Using the fact that $\norm{\infty}{\phi_{H_{F}}(W)}\leq m$ and $H_{F}(W)\in [0,1]$ for every $W\in\Wcal_0$, we conclude that $\phi(\slot{})(x, y)$ is $km^2$-Lipschitz with respect to $\norm{\cut}{}$ for a.e. $(x,y)\in[0,1]^{(2)}$ and hence $\phi$ satisfies Assumption~\ref{asmp:b_lip_cut}.

\subsection{Entropy minimization with edge-triangle constraints}
\label{subsec:EdgeTriangle}
We conclude with the discussion of the example mentioned in the Introduction. Recall the problem of minimizing the scalar entropy $\Ent$ over $\Graphons_0$ with prescribed edge density $H_{\mathrel{-}}(\slot{})=e \in[0,1]$ and triangle density $H_{\triangle}(\slot{})=\tau\in [0,1]$ (see~\cite[Section 5.1-5.2]{Oh2023}). As mentioned in~\cite{neeman2023typical}, in general this problem does not admit unique minimizer. 

Let us consider a relaxation of this problem. Let $\psi\colon\R\to \R$ be a non-decreasing convex function such that $\psi'(-\log(2))\eqqcolon A>1$. Consider minimizing the function
\[
    W\mapsto R(W)\coloneqq \inv{2}\left((H_{\mathrel{-}}(W)-e)^2+(H_{\triangle}(W)-\tau)^2\right) + \psi(\Ecal(W)).
\]
Since $\psi$ is non-decreasing, minimizing $\Ecal$ is equivalent to minimizing $\psi\circ \Ecal$. On the other hand, the term $\frac{1}{2}\left((H_{\mathrel{-}}(W)-e)^2+(H_{\triangle}(W)-\tau)^2\right)$ penalizes any deviation from the marginal constraint on the edge and triangle densities.

It follows from the previous discussion that  $W\mapsto \inv{2}(H_{\mathrel{-}}(W)-e)^2+\inv{2}(H_{\triangle}(W)-\tau)^2$ is $\lambda$-semiconvex with $\lambda=-8$. On the other hand, $\Ecal$ is $4$-semiconvex and therefore $\psi \circ \Ecal$ is $4A$-semiconvex. In particular, if $A>2$ then $R$ is strongly convex and hence admits a unique minimizer and the gradient flow converges exponentially fast to the minimizer of $R$. In this case, the gradient flow of $R$ converges exponentially fast to the minimizer.

For instance, take $\psi = 4\id$ and consider the optimization algorithm described in Definition~\ref{def:PNSGD}. For every $n\in\Natural$, $X_n\in\Mcal_n$, and $(i,j)\in[n]^{(2)}$, we can evaluate $g_{n,(i,j)}(X_n;\xi)$ as
\begin{align*}
    g_{n,(i,j)}(X_n;\xi) &\coloneqq 4\log\round{\frac{X_n(i,j)}{1-X_n(i,j)}} + \round{X_n(i_1,i_2) - e}\\
    &\qquad + \round{X_n(i_3,i_4)X_n(i_4,i_5)X_n(i_5,i_3) - \tau}X_n(i,i_6)X_n(i_6,j),
\end{align*}
where $\xi = (i_z)_{z\in[6]} \overset{\rm i.i.d.}{\sim} \mathrm{Uni}\round{[n]}^6$. Notice that $\Exp{\xi}{g_n(X_n;\xi)} = \nabla R_n(X_n)$, and Assumption~\ref{asmp:small_noise} is satisfied. Theorem~\ref{thm:SGD_to_SDEn} and Theorem~\ref{thm:zeroTempLimit} tell us that the~\eqref{eq:PNSGD} algorithm in the absence of large noise, converges to the minimizer of $R$ as the step size of the algorithm goes to zero, and $n\to\infty$.

If one takes $\psi=\id$ then the function $R$ is not guaranteed to be convex. Therefore, there may be multiple minimizers of $R$ as mentioned in~\cite{neeman2023typical}. Since $R$ is not strictly convex, the gradient flow may not converge to the minimizer, however, it does converge to a stationary point with a polynomial rate. 
%A similar method can be used to minimize a convex function $R$ over the space of graphons with finitely many homomorphism densities prescribed. 

\subsection{A linear regression problem}\label{sec: Linear regression}
Let $(X, Y)\in \mathbb{R}^n\times \mathbb{R}^n$ be a random vector. Consider the function $R_n$ on $\Mcal_n^0$, the set of symmetric $n\times n$ matrices with entries in $[0, 1]$ defined as
\begin{equation} \label{eq:gls}
    R_n(A)\coloneqq\frac{1}{n}\mathbb{E}\norm{2}{Y-n^{-1}AX}^2.
\end{equation}
The function $R_n$ in~\eqref{eq:gls} is permutation invariant if the joint distribution of $(X, Y)$ is exchangeable (i.e. for any permutation $\tau$, the distribution of $(X^\tau, Y^{\tau})$ is the same as that of $(X, Y)$, where $(X^\tau_i, Y^\tau_i)=(X_{\tau(i)}, Y_{\tau(i)})$. The function $R_n$ is also differentiable in the Euclidean sense. Let $X_n$ be $\Mcal_n^0$ valued process satisfying the SDE~\eqref{eq:exampleR} with drift function $R_n$. We now describe the McKean-Vlasov limit of $K(X_n)$ as $n\to \infty$. %Naturally, this SDE can be seen as a stochastic gradient algorithm to find the minimizer of $R_n$ for small parameter $\beta>0$. 

To this end, we first expand $R_n$ in~\eqref{eq:gls} and compute the $\nabla R_n$. Let $C_n, C_n'$ be $n\times n$ matrices such that $\Sigma(i, j) = \E{X_iX_j}, \quad \Sigma'(i, j) = \E{Y_iX_j}$. It follows from the exchangeability of $(X, Y)$ that 
\begin{align*}
    C_n(i, j) &= a\delta_{i\neq j}+b\delta_{i=j}, \qquad
    C_n'(i, j) = c\delta_{i\neq j}+ d\delta_{i=j}\;,
\end{align*}
where $a= \mathbb{E}(X_1X_2), b= \E{X_1^2}, c= \mathbb{E}(Y_1X_2), d=\E{X_1Y_1}$. With this notation, we can rewrite  $R_n$ as 
\begin{align*}
    R_n(A) &=  \E{Y_1^2}+ H_n(A)+ E_n(A)\;,
\end{align*}
where 
\begin{align*}
    H_n (A)&= \frac{a}{n^3}\sum_{i, j, k=1}^{n}A(i, j)A(i, k)-\frac{2c}{n^2}\sum_{i, j=1}^{n}A(i, j) =a\hom(P_3, A)-2c\hom(P_2, A),\\
    E_n(A) &= \frac{(b-a)}{n^3}\normF{A}^2 - \frac{2(d-c)}{n}\hom(P_2, A)\;.
\end{align*}
In particular, $\nabla R_n(A) = \nabla H_n(A) + \nabla E_n(A)$. Since the entries of $A$ are bounded, we also have
\begin{align*}
    |\nabla E_n(A)(i, j)| &\leq \frac{C}{n^3}\delta_{i\neq j}+ \frac{C}{n^2}\delta_{i=j}\,
\end{align*}
for some constant $C>0$. Therefore, 
\[
    \norm{2}{K\round{n^2\nabla H_n(A)-n^2\nabla R_n(A)}} = \norm{2}{K\round{n^2\nabla E_n(A)}}\leq \frac{C}{n}\to 0,
\] 
as $n\to \infty$. By Remark~\ref{rem:Perturbation}, the McKean-Vlasov limit of $\round{X_n}_{n\in\Natural}$ is the same as the McKean-Vlasov limit of the process $\round{Y_n}_{n\in\Natural}$ satisfying~\eqref{eq:exampleR} with drift function $\nabla H_n$ for all $n\in\Natural$. Since $H_n$ is a linear combination of homomorphism density functions and can be seen as the restriction of the function
$\mathcal{H}\colon \mathcal{W}\to \mathbb{R}$ given by 
\[
    \mathcal{H}(W) = \sigma_{Y}^2 + a \int W(x, y)W(x, z)\diff x\diff y\diff z-2c \int W(x, y)\diff x\diff y,
\]
it follows from our discussion in Section~\ref{sec:scalar entropy and homomorphism example} that $\round{Y_n}_{n\in\Natural}$ converges to a McKean-Vlasov limit~\eqref{eq:infinite_SDE0} and~\eqref{eq:whatisgammat0} with the drift $\phi$ defined as  
\[
    \phi(W)(x, y) \coloneqq -D\mathcal{H}(W)(x, y) = a\int W(x, z)\diff z -2c, \qquad (x,y) \in [0,1]^2
\]
In particular, any local minimizer must satisfy the condition $a\int W(x, z)\diff z=2c$.
The same method can be extended in an obvious manner to the squared norm in~\eqref{eq:gls} is replaced by any even positive power.
%\clearpage
% \input{sections/discussion}%\clearpage

%\input{sections/example}
%\clearpage
% \input{sections_arxiv/examples_CE}\clearpage
% \input{sections_arxiv/analogy_OT}\clearpage
% \input{sections_arxiv/discussion}\clearpage

%%%%%%%%%%%% Bib as in amsart template
% \begin{thebibliography}{100}
% \bibitem{Test} J. Smith, \emph{Test Title}, Test Publisher, New York, 19--. 
% \end{thebibliography}
%%%%%%%%%%%%

%%%%%%%%%%%%
% \nocite{*}
% \printbibliography
\bibliographystyle{alpha} % can use amsalpha but repeated authors are dashed
\bibliography{references}
%%%%%%%%%%%%

%\clearpage
% \appendix
% \input{sections_arxiv/appendix}
% \input{appendix/definitions}\clearpage
% \input{appendix/technical_lem}\clearpage

\end{document}